\documentclass{amsart}
\usepackage[utf8]{inputenc}
\usepackage{amsfonts, amssymb, amsmath, amsthm, latexsym}
\usepackage{hyperref}
\hypersetup{
	colorlinks=true,
	linkcolor=blue,
	filecolor=magenta,      
	urlcolor=blue,
}
\usepackage{mathtools}
\usepackage{tikz,tikz-cd}
\usetikzlibrary{positioning,arrows,arrows.meta, decorations.pathmorphing,backgrounds,patterns,decorations.markings,decorations.pathreplacing,decorations.text,shapes.geometric,svg.path}
\usepackage[shortlabels]{enumitem}
\usepackage{caption, subcaption}
\usepackage[capitalise]{cleveref}
\graphicspath{ {img/} }

\usepackage{comment}

\newtheorem{thm}{Theorem}[section]
\newtheorem{lemma}[thm]{Lemma}
\newtheorem{prop}[thm]{Proposition}
\newtheorem{cor}[thm]{Corollary}
\theoremstyle{definition}
\newtheorem{qu}[thm]{Question}
\newtheorem{defn}[thm]{Definition}
\newtheorem{remark}[thm]{Remark}

\newtheorem{example}[thm]{Example}
\newtheorem{construction}[thm]{Construction}

\newtheorem*{thm*}{Theorem}

\newcommand{\mcomment}[1]{}

\newcommand{\blockcomment}[1]{%
 }%

\newcommand{\lam}[1]{\textcolor{olive}{\textbf{LAM:} #1}}
\newcommand{\ajd}[1]{\textcolor{violet}{\textbf{AD:} #1}}
\usepackage[normalem]{ulem}
\newcommand\so{\bgroup\markoverwith	{\textcolor{blue}{\rule[.5ex]{2pt}{0.4pt}}}\ULon} 

\newcommand*{\goodness}{proximacy}%
\newcommand*{\weakgoodness}{weak proximacy}%
\newcommand*{\good}{proximate}
\newcommand{\Flower}{{\sf Flower}}%
\newcommand{\supp}{{\operatorname{{Supp}}}}
\newcommand{\out}{\operatorname{out}}
\newcommand{\lk}{\operatorname{{lk}}}
\newcommand*{\A}{\mathcal{A}}
\newcommand*{\B}{\mathcal{B}}
\newcommand*{\M}{\mathcal{M}} 
\newcommand*{\X}{\mathcal{X}}
\newcommand*{\Y}{\mathcal{Y}}
\newcommand*{\N}{\mathbb{N}}
\newcommand*{\Z}{\mathbb{Z}}
\newcommand{\R}{\mathbb{R}}
\newcommand\emptyword{\varepsilon}
\newcommand{\ve}{\varepsilon }
\DeclareMathOperator{\st}{St} 

\newcommand{\Ccond}{\mathrm{C}}
\newcommand{\Tcond}{\mathrm{T}}
\newcommand{\Dsim}{\mathrm{sim}}
\newcommand{\SR}{\mathrm{sr}}
\newcommand{\spin}{stem}
\newcommand{\botanic}{botanic}
\newcommand{\rc}{\mathfrak{c}}
\newcommand{\red}{\mathrm{red}}
\newcommand{\mupi}{\mu}
\title{Stallings foldings for rational subsets of automatic groups   
}
\normalsize{\thanks{The authors would like to
 thank Marco Linton and Armando Martino for their useful input in the application to surface groups, and Alex Evetts for helpful discussions of convexity notions. The authors acknowledge support by Leverhulme foundation
grant no.RPG-2022-025 during the work on this article.}}
\author[L. Asencio-Mart\'{i}n]{Luc\'{i}a Asencio-Mart\'in}
\address{School of Mathematics, Statistics and Physics, Newcastle University,
Newcastle upon Tyne
NE1 7RU, United Kingdom}
\email{L.Asencio-Martin2@newcastle.ac.uk}

\author[J. Britnell]{John R. Britnell}
\address{School of Mathematics, Statistics and Physics, Newcastle University,
Newcastle upon Tyne
NE1 7RU, United Kingdom}
\email{john.britnell1@newcastle.ac.uk }

\author[A. Duncan]{Andrew Duncan}
\address{School of Mathematics, Statistics and Physics, Newcastle University,
Newcastle upon Tyne
NE1 7RU, United Kingdom}
\email{andrew.duncan@newcastle.ac.uk }

\author[D. Francoeur]{Dominik Francoeur}
\address{Facultad de Ciencias,
Universidad Aut\'onoma de Madrid,
Cantoblanco Ciudad Universitaria,
28049 Madrid, Spain }
\email{dominik.francoeur@uam.es}

\author[S. Rees]{Sarah Rees}
\address{School of Mathematics, Statistics and Physics, Newcastle University,
Newcastle upon Tyne
NE1 7RU, United Kingdom}
\email{sarah.rees@newcastle.ac.uk}
\begin{document}

\begin{abstract}
  Let $G$ be an automatic group with associated regular language $L$. We describe a procedure for constructing an automaton which recognises elements of a given submonoid or rational subset $K$ of $G$. This builds on work of Kharlampovich, Miasnikov and Weil, on the case where $K$ is a subgroup of~$G$.

Our construction succeeds, after sufficiently many iterations, whenever $K$ satisfies a certain convexity property, which we call $L$-\goodness. We show how to test whether the construction is complete in the case that $K$ is a submonoid; we have no such test for the general case of a rational subset $K$.  

We focus particularly on the case of a surface group $G$ of genus $g>1$, where $L$ is the language of geodesic words in the standard generators. We use small cancellation theory to obtain a method for constructing $L$-recognisable submonoids of $G$. 
\end{abstract}\mcomment{JB: added abstract. Moved acknowledgements to end of Section 1.\\ AD: moved acknowlegements to a footnote on the title page.}
\maketitle
\setcounter{tocdepth}{1}
\tableofcontents

\section{Introduction}\label{sec:Introduction} 

This paper   describes a procedure
whose aim is to construct an automaton to recognise  the elements of a specified
submonoid or rational subset of an automatic group. 
\mcomment{AD: broke the sentence into two.}
The techniques involved are similar to those of the classic Stallings' folding algorithm, which applies to subgroups of
free groups,  and its recent generalisations.

More precisely we extend the partial algorithm of \cite[Section 4]{kmw17}, 
which applies to a subgroup $H$ of an automatic group $G$, with associated 
regular language $L$, and which terminates with an automaton recognising $H$ 
precisely when $H$ has the property that it is $L$-\emph{quasi-convex}.
This property, 
introduced in \cite{gs91}, \mcomment{AD: replaced kmw17 by gs91} generalises
the standard notion of quasi-convexity; we state the definition in \cref{subsection: convexity} below, but note here that the notions of quasi-convexity and $L$-quasi-convexity coincide in the case that $G$ is hyperbolic and $L$ is the set of all geodesics.

The procedure that we describe in this paper extends that of \cite{kmw17}
to the setting
of a rational subset $K$ of $G$ that need not be a subgroup, given by a regular language $Q$ over $S$.
In order for the procedure to terminate, we require a condition on $K$ (really a condition on $Q$) that is stronger than $L$-quasi-convexity, which we
call $L$-\emph{\goodness} (\cref{def:GoodQ}). 

If $G$ is a finitely presented group and $K$ an $L$-{\good} rational subset of $G$, then after a finite number of steps, our procedure 
will output an automaton accepting the language of all words in $L$ that map to
$K$; this is \cref{thm:algorithm_terminates}. In the case that $K$ is a submonoid of $G$ and the regular language $Q$ is equal to $T^*$ for some finite set of words $T$, we see (\cref{thm:submonoid_corollary}) that the $L$-\goodness\ condition can be relaxed somewhat, to a condition that we call \emph{weak}-$L$-\emph{\goodness}\ (\cref{def:WeaklyGoodQ}).

In general our procedure does not give an algorithm for finding the set of $L$-representatives of $K$, since we cannot always tell whether it has completed.
However, in the case where the group $G$ is automatic, and $K$ is an $L$-{\good}\ submonoid of $G$ 
(in which case $K$ is necessarily rational), 
we are able to turn the procedure into an algorithm; we describe a test which can be applied after 
each iteration of the procedure, to tell us whether the automaton it has constructed recognises all the $L$-representatives (and so whether the procedure has completed). This result is \cref{thm:subpgalg}.

Finally, we apply our results, together with some small cancellation theory, to submonoids of surface groups, and state a criterion for a submonoid of a surface group to have constructively decidable membership problem (\cref{thm:Dehn_reduced_Q_constructively_decidable}).
We use this criterion to find some examples of
submonoids with decidable membership problem which we believe are not covered in the literature.

The structure of the paper is as follows: in Section 2 we make preliminary definitions and give some background on automata, rational subsets of groups, and convexity.  In Section 3 we introduce the notion of an $L$-{\good}\ subset of a group and establish the basic properties needed
for our purposes. We also define the weak-$L$-\goodness\ property that is sufficient to allow construction of a folding in the case of finitely generated submonoids. 
Section 4 describes our procedure for constructing an automaton which recognises the $L$-representatives of a rational subset of a group $G$, and establishes our general results concerning it. 
In Section 5 we focus on automatic groups, and discuss how to determine when our procedure has completed.
Section 6 contains our treatment of submonoids of surface groups.

In future work we shall address applications of our procedure to arbitrary rational subsets of certain groups (for example, right-angled Artin groups).

\subsection{Background: Stallings folding}
\label{subsec:Introduction}
The concepts of Stallings foldings and Stallings automata, when introduced in 1983 \cite{stallings83}, provided group theorists with a completely new approach to understanding finitely generated subgroups of free groups.
 
Given a free group $G$ with generating set $S$, and a subgroup $H = \langle T \rangle$ of $G$, specified by its finite generating set $T$ of words over $S$,
Stallings' algorithm applies a sequence of `foldings' to a labelled graph 
constructed from $T$, iterating until 
it yields a deterministic finite state automaton (or labelled graph) $\st(H)$,
called the \emph{Stallings automaton} of $H$, 
which recognises the set of all reduced \mcomment{AD: added ``reduced'' and tweaked wording} words over $S$  
that belong to the subgroup $H$.
From an automata theoretic point of view, the folding operations are precisely what is 
needed for determinisation and minimisation of a \mcomment{AD: changes to wording} finite state 
automaton that has the property that the reverse of a  transition on a symbol 
$x$ is a transition  on the symbol $x^{-1}$; following \cite{bs21} we shall 
call such an automaton \emph{involutive}. 
From a graph theoretic point of view, $\st(H)$ is the core of the Schreier graph of $H$.

The Stallings automaton for a subgroup provides more than just a finite object to recognise its elements;
a lot of valuable information can be read from it. Information about the 
subgroup index, conjugates, normaliser, decidability problems regarding 
membership and intersections, separability and much more can be easily 
extracted from it.

\mcomment{AD: altered the last two paragraphs of this section.}
The overall framework of a generalisation of Stallings folding to 
subgroups of groups beyond free groups entails a group $G$ with finite 
generating set $S$ and a finite set $T$ of words over $S$, generating a  
subgroup $H \leq G$. With input  a structure built from $T$,  an operation (folding)  is iteratively performed
until a finite object $\st(H)$,  uniquely associated to $H$, and from which information about $H$ can be extracted,
is reached. 
 The subgroup $H$ needs to 
satisfy certain hypotheses to guarantee that this finite object is reached. For example, in 
the case of free groups $H$ must  be finitely generated.
The work 
of Kharlampovich, Miasnikov and Weil \cite{kmw17}, follows this framework to give  a  generalisation of Stallings foldings for subgroups of
automatic groups, that have the  $L$-quasi-convex property, thereby
solving such problems as subgroup membership for this class of subgroups. 

More recently, Dani and Levicovitz \cite{dl21} used similar techniques to study 
subgroups of right-angled Coxeter groups, using a generalisation of folding 
that yields a Stallings-like object which is a cube complex instead of a graph.
 Information about normality and index is reflected in this cube complex and, in
 the 
case that the subgroup is finitely generated and quasi-convex, the associated cube complex is finite and this information can be explicitly read from that 
finite object. Generalising the results of  \cite{dl21}, Ben-Zvi--Kropholler--Lyman \cite{bkl22} developed a 
Stallings foldings framework to study subgroups of fundamental groups of 
non-positively curved cube complexes. This construction also yields a finite object 
when the subgroup is finitely generated and quasi-convex, and this object too 
encodes information about normality and index, and allows solution of the 
subgroup membership problem.

\section{Definitions and notation}\label{sec:Notation} 

\mcomment{JB: organised this section into formal subsections, rather than just boldface headings}

\subsection{Words, free monoids, free groups: }
\mcomment{\ajd{ made cosmetic changes to this subsection}}Given a set $\Sigma$ we denote by $\Sigma^*$ the set of all words on $\Sigma$,  as well as the free monoid over $\Sigma$ if we want to stress the concatenation operation.  A set $\Sigma$ equipped with a map $\nu$ to a monoid $M$ is called  a \emph{monoid generating set} for $M$ if the natural extension of $\nu$ to a map from $\Sigma^*$  to $M$ is surjective. Usually all mention of $\nu$ is suppressed and often we implicitly assume that $\Sigma$ is a subset of $M$ and $\nu$ is the inclusion map. The definition of a \emph{group generating set} for a group $G$ is made the same way, using  the free group on $\Sigma$ instead of the free monoid on $\Sigma$.  The symbols $\emptyword$ and $1$ are respectively used to refer to the empty word and to the identity element (in a monoid or a group).

Throughout this document, unless stated otherwise, $S = \{s_1,\ldots s_r\}$ will denote a finite set (also referred to as an \emph{alphabet} with elements
called \emph{letters}).
\mcomment{AD: added formal inverse of a word and set} The \emph{formal inverse} of a word $w=s_1\cdots s_n$, where $s_i\in S^\pm$, is the word
$w^{-1}=s_n^{-1}\cdots s_1^{-1}$. For a subset $T$ of $(S^{\pm})^*$ we define the \emph{formal inverse} of $T$ to be $T^{-1}=\{w^{-1}\,:\, w\in T\}$.
 We denote by $S^\pm = S\sqcup S^{-1}$ the alphabet given by the letters in $S$ together with their (formal) inverses. 
and by $(S^\pm)^*$ the set of all worda over $S^\pm$. 
 The free group on $S$, denoted $F(S)$, 
can be described on the set of equivalence classes of $(S^\pm)^*$ under the 
relation induced by free reduction,
with concatenation as the group operation. In this way, $F(S)$ can be identified with the set of freely reduced words in $(S^\pm)^*$, and hence 
\mcomment{SR: reworded description of $F(S)$}
 may be regarded as a subset of  the free monoid over $S^\pm$. However we note that 
this is an embedding of $F(S)$ in $(S^\pm)^*$ only as a set, and not as a submonoid.

The freely reduced representative of a word $w$ in $(S^\pm)^*$ is denoted by  $\overline w$. 
For a word $w$ in $(S^\pm)^*$, we denote by $|w|$ its length,
given by the number of letters in $w$,  and by $|w|_{F(S)}$  the length $|\overline w|$ of  $\overline w$.
 
For a group $G$ with generating set $S$, we denote by $\mu\colon (S^\pm)^*\to G$ and $\pi\colon F(S)\to G$ the natural projections 
from the free monoid and the free group, respectively,  onto $G$. Note that $\mu$ factors through $\pi$.
\begin{center}
	\begin{tikzcd}
		(S^\pm)^*\arrow[r,   "\overline{ (-)}"]\arrow[dr, pos =3/8, "\mu"]&F(S)\arrow[d,  "\pi"]\\
		&  G = \langle S|R\rangle
		
	\end{tikzcd}
\end{center}
Understanding $F(S)$ as a subset of $(S^\pm)^*$ we can think of $\pi$ as the restriction $\mu\vert_{F(S)}$ of $\mu$ to $F(S)$, and therefore of $\pi^{-1}(G)$ as a subset of  $(S^\pm)^*$. To signify that two words $w_1$, $w_2$ in $(S^\pm)^*$ or $F(S)$ have the same image under $\mu$ or $\pi$, we will write $w_1=_Gw_2$.

Given a set of words $T\subset (S^\pm)^*$, we denote by $T^\pm$ the union of $T$ with the set $T^{-1}$ of its formal inverses; each element of the 
subgroup $\langle T \rangle$ can be represented as a product of elements of $T^\pm$. 
For $w$ in $(T^\pm)^*$ (representing an element of the subgroup $\langle T\rangle$)
we define the length of $w$ in $G$ with 
respect to $T$ to be  $$|w|_{G, T} = \min\{n \colon w =_G v, \: v = h_1\ldots h_n, \: h_i\in T^\pm\}.$$

\subsection{Finite state automata: }
\mcomment{AD: reorganised this subsection to go: automata; necessary results and genralisations; automata with groups and monoids}
We refer to \cite{hrr17} for notation and standard results, noting that more detail can be found in the classic text \cite{hu79}. The notation $\A= (\Sigma, V, E, V_F, v_0)$ 
will be used to describe a \emph{finite state automaton} (FSA) where $\Sigma$ is a finite set (the \emph{alphabet}),  $V$ is the set of vertices or 
\emph{states}, 
$E$ is the set of $\Sigma$-labelled edges or \emph{transitions} of the form $(v, s, w)\in V\times \Sigma\times V$, 
$v_0\in V$ is the \emph{initial state} and $V_F\subset V$ is the set of 
\emph{accepting states} of the automaton. 
As our notation suggests, we shall often view $\A$ as an edge-labelled directed graph. We say that $(v, s, w)$ is 
an \emph{$s$-labelled} transition, with $v$ as its \emph{source} state and $w$ as its \emph{target} state. For an edge $e = (v, s, w)$, we will use the notation $l(e)=s$ for the labelling function. Unless stated otherwise, the word automaton will implicitly refer to one with finitely
many states, throughout this text.

To each finite state automaton $\A$ we can assign the set $L(\A)\subset \Sigma^*$ 
of words \emph{accepted} by the automaton, which are the words given by the concatenation of edge labels of paths in $\A$ that start in $v_0$ and end in one of the 
accepting states;  we call $L(\A)$ the \emph{language} accepted by $\A$,
and say that $\A$ \emph{recognises} $L(\A)$.
 We say that a  subset $L$ of $\Sigma^*$ is \emph{regular} if it is the language $L(\A_L)$  accepted by a finite state automaton $\A_L$.

We can consider automata with $\varepsilon$-transitions, that is, 
automata over the alphabet $\Sigma_{\varepsilon}= \Sigma\sqcup \{\varepsilon\}$. 
We note that,
given any automaton $\A_{\varepsilon}$ over the alphabet $\Sigma_{\varepsilon}$,
an algorithm  described in  
\cite[Section 2.5.1]{hrr17} 
constructs an automaton $\A$ over $\Sigma$ accepting the same language.
	
Similarly, given a \emph{non-deterministic FSA} (one where $s$-labelled transitions $(v, s, w)$, $(v, s, w')$ with different target states $w\neq w'$ are allowed) 
an algorithm described in \cite[Proposition 2.5.2]{hrr17}
constructs
a \emph{deterministic FSA} 

From the above it is clear that there are many different FSA accepting a given
 regular set. However, there is a unique one that is deterministic and has 
the least number of states, which we call the \emph{minimal automaton}, and there is 
an algorithm to obtain it from a non-minimal one (Myhill--Nerode) \cite[Theorem 2.5.4]{hrr17}.
Note that the minimal automaton of a non-empty language is \emph{trim}, 
that is each state lies on a directed path
from the start state to an accept state. \mcomment{High level committee: changed definition of trim... at least twice.} 

Where $M$ is a monoid with finite generating set $\Sigma$, we call a subset $K$ of $M$ \emph{rational} if it is the image of a regular subset $Q$ of $\Sigma^*$ under
\mcomment{\ajd{the canonical}} the canonical monoid homomorphism $\mu: \Sigma^* \to M$. We note that this is one of a few equivalent
definitions of a rational subset of a monoid, and that a subset of $\Sigma^*$ is regular precisely when it is rational.

Since we want to use automata to
define
languages that are subsets of groups, the alphabets $\Sigma$ of our automata will often be sets of the form $S^\pm= S \sqcup S^{-1}$, where $S$ is a finite generating set of a related group.
\mcomment{SR:  deleted sentence suggesting we should abuse notation and accept $S$-labelled might mean $S^\pm$-labelled}

It is well known that if a subset $L$ of $(S^\pm)^*$ is regular, its \emph{free reduction}
$\overline L := \{\overline w\,:\, w\in L\}$  is also regular (Benois' theorem, see \cite{benois,bs21},).\mcomment{AD: added defn of $\overline L$}

\subsection{Rational structures and automatic groups: } 
A pair $(G, L)$ is called a \emph{rational structure} for a group $G$ 
if $G=\langle S\rangle$ for a finite alphabet $S$ and $L\subset (S^\pm)^*$ 
is a regular set where the restriction  $\mu\vert_{L}\colon L\to G$ is a surjection. 
We note that if $(G,L)$ is a rational structure for $G$, then  $(G,\bar{L}$) is also a rational structure, with $\bar{L} \subset F(S)$. 

Given a rational structure $(G, L)$ and a subset $K$ of $G$, we define the  
set of \emph{$L$-representatives} of $K$ to be the subset 
$L\cap \mu^{-1}(K)$ of $(S^\pm)^*$
and the 
set of \emph{reduced $L$-representatives} of $K$ to be the subset 
$\bar{L}\cap \pi^{-1}(K)$ of $F(S)$; 
we observe that the image in $G$ of either of these sets is equal to $K$, 
since $L$ maps onto $G$.
We say that the subset  $K$ is \emph{ $L$-recognisable}\footnote{$L$-recognisable subsets first appear in \cite{gs91}, where they are
  called $L$-rational.} \mcomment{AD: added footnote}
if its $L$-representatives form a regular subset of $(S^\pm)^*$ 
(equivalently, its reduced $L$-representatives form a regular subset of $ F(S)$); 
we abbreviate 
$(S^\pm)^*$-recognisable as recognisable. \cref{fig:rational_subset} illustrates the set of $L$-representatives of a rational subset
$K=\mu(Q)$ of a group, where $Q$ is a regular subset of $(S^\pm)^*$.

\begin{figure}[ht]
\def\firstellip{(0, 0) ellipse [x radius=2, y radius=2.5, rotate=0]}
\def\secondellip{(-.5, 0) ellipse [x radius=1, y radius=1.5,rotate=0]}
\def\thirdellip{(.8, 0) ellipse [x radius=1, y radius=1.2,rotate=0]}
\def\fourthellip{(.8, 0) ellipse [x radius=.5, y radius=.5, rotate=0]}
\def\bounding{(-3,-3) rectangle (3,3)}

	\centering
\begin{tikzpicture}[scale=1]
  \filldraw[fill=none, color=black, opacity=0] \bounding;


\draw \firstellip node[below] {};
\draw \secondellip node[below] {};
\draw \thirdellip node[below] {};
\draw \fourthellip node[below] {};

\draw (0,2) node[draw=none,fill=none, color=black]  {\scriptsize{$(S^{\pm})^*$}};
\draw (-.6,1) node[draw=none,fill=none, color=black]  {\scriptsize{$L$}};
\draw (1,.7) node[draw=none,fill=none, color=black]  {\scriptsize{$\mu^{-1}(K)$}};
\draw (1.1,.1) node[draw=none,fill=none, color=black]  {\scriptsize{$Q$}};

\end{tikzpicture} 

\caption{$L$-representatives of a rational subset: $L$ and $Q$ are regular subsets of $(S^\pm)^*$ and the projection $\mu\colon L\to G$ makes $(G, L)$ a rational structure. The subset $K=\mu(Q)$ is rational in $G$, and its set of $L$-representatives is $\mu^{-1}(K)\cap L$ which is not, in general, either
  regular or equal to $Q\cap L$.}  
	\label{fig:rational_subset}
\end{figure}

Following \cite{echlpt92}, we call a group $G$ \emph{automatic} if a rational structure $(G, L)$ exists, which is additionally equipped with a collection of 
finite state automata $\{\M_s\}$ for  $s\in S^\pm\cup \{1_G\}$ called 
\emph{multipliers}, that accept strings corresponding to pairs of words $(w_1, w_2)$ in $L\times L$ such that $w_1s =_G w_2$ in the group. 

A subset $K$ of a finitely presented group $G$ 
has \emph{decidable membership problem} if there exists an algorithm for deciding whether or not an element $g\in G$ lies in $K$.
The following proposition derives from \cite[Proposition 3.13]{kmw17}.
\begin{prop}\label{prop:AutomaticLRecognisableMembership}
Let $G=\langle S\rangle$ be a group with automatic structure $(G,L)$, and let $K$ be a rational subset of $G$. If $K$ is $L$-recognisable, then $K$ has decidable membership problem.
\end{prop}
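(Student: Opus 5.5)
The plan is to decide membership of $g$ in $K$ by first producing an $L$-representative of $g$ and then testing that word against an automaton for $L\cap\mu^{-1}(K)$. Since $K$ is $L$-recognisable, this language is regular. One subtlety is effectiveness: an algorithm needs a concrete automaton $\A_K$ with $L(\A_K)=L\cap\mu^{-1}(K)$. In the non-uniform sense of decidability, where we only ask that an algorithm exists, existence suffices and we simply fix such an automaton as part of the algorithm. This is how I read the statement, following \cite[Proposition 3.13]{kmw17}.

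The steps, given $g\in G$ as a word $w=a_1\cdots a_n$ over $S^\pm$, are as follows.

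\textbf{Step 1: compute an $L$-representative of $g$.} Start with an $L$-representative $u_0$ of the identity. This exists because $\mu|_L$ is surjective, and we can find one by searching $L$ and using the multiplier $\M_{1_G}$ on pairs $(v,v')$; alternatively we just fix $u_0$ in advance. Inductively, given $u_{i-1}\in L$ with $u_{i-1}=_G a_1\cdots a_{i-1}$, we find $u_i\in L$ with $(u_{i-1},u_i)$ accepted by $\M_{a_i}$. To do this, intersect the padded-pair language of $\M_{a_i}$ with pairs whose first coordinate is $u_{i-1}$; this is a regular language, so we can compute an automaton for the possible second coordinates. That set is nonempty because $\mu|_L$ is onto, so we can extract an accepted word by a standard search. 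Then $u_n\in L$ and $u_n=_G w$.

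\textbf{Step 2: test membership.} Run $\A_K$ on $u_n$. Since $u_n\in L$, we have $u_n\in L(\A_K)$ iff $\mu(u_n)\in K$, that is, iff $g\in K$. We answer accordingly.

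The only step needing care is Step 1, namely effectively extracting a second coordinate from the multiplier automaton given the first. This is a standard construction: project the product automaton of $\M_{a}$ with the padded word $u_{i-1}$ onto the second track, then remove padding symbols. I expect it to be routine. The rationality hypothesis on $K$ is not actually used beyond guaranteeing the setting; the essential hypothesis is $L$-recognisability.
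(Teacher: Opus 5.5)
Your proof is correct, but it matches $g$ against the automaton for $L\cap\mu^{-1}(K)$ differently from the paper.

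\textbf{The two routes.} The paper invokes \cite[Lemma 3.10]{kmw17} to build an automaton $B_h$ accepting \emph{all} $L$-representatives of $h$. It then decides $h\in K$ by testing whether $L(A)\cap L(B_h)$ is nonempty. You instead compute a \emph{single} $L$-representative $u_n$ of $g$, using the multipliers one letter at a time, and run $\A_K$ on it.

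\textbf{What each buys.} Your version is more elementary and self-contained, since it uses only the standard normal-form computation in an automatic group. It is valid precisely because $L(\A_K)$ contains every $L$-representative of every element of $K$, so one representative settles the question. The paper's intersection test is slightly more robust. It would still work if the automaton were only known to accept some, but not all, $L$-representatives of each element of $K$, and none of elements outside $K$. Both arguments rely on the same non-constructive step of fixing an automaton for $L\cap\mu^{-1}(K)$, which the paper flags immediately after its proof.

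\textbf{One small correction.} Searching $L$ with the equality multiplier $\M_{1_G}$ cannot by itself identify an $L$-representative of $1_G$, because it only compares pairs of words. Your fallback of fixing $u_0$ in advance is fine. You can also find $u_0$ effectively: take any $v=b_1\cdots b_m\in L$ and apply the multipliers for $b_m^{-1},\ldots,b_1^{-1}$ in turn.
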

\begin{proof}
Let $h\in G$. Lemma 3.10 of \cite{kmw17} implies that it is possible algorithmically to construct an automaton $B_h$ which accepts precisely the $L$-representatives of $h$. If $K$ is $L$-recognisable, then there exists an automaton $A$ which accepts the $L$-representatives of $K$. Now $h\in K$ if and only if the languages of $A$ and $B_h$ have non-empty intersection.
\end{proof}
We note that while \cref{prop:AutomaticLRecognisableMembership} establishes that an algorithm exists to decide membership of $K$, it does not enable us to run that algorithm unless we can first calculate the automaton $A$ whose language is the set of $L$-representatives of $K$. With this in mind we say that
a decision problem is \emph{constructively decidable} if it is decidable and we have an effective means to construct an algorithm to solve it. \mcomment{AD: added constructively decidable}

\subsection{Conventions for Cayley graphs and paths: } \label{sec:cayley} Given a group $G =\langle S\rangle$, 
the \emph{Cayley graph} of $G$ with respect to $S$ will be denoted by $\Gamma(G, S)$ (or just  $\Gamma(G)$ if the generating set $S$ is understood).
The graph is directed and labelled.
Its vertex set $V\Gamma$ is in correspondence with $G$ and the label $l(v)$ 
of a vertex $v$ is the corresponding element of $G$.
For each $v \in V\Gamma$, and each $s \in S^\pm$, there is a directed edge 
$e\in E\Gamma \subseteq V\Gamma \times V\Gamma$ with label $s$, joining $v$ to the vertex with label $l(v)s$.

By a path (of \emph{length} $n\ge 0$)\mcomment{AD: added length and lower bound for $n$} in the Cayley graph, we mean a directed graph homomorphism $\gamma\colon P_n\to \Gamma(G, S)$ from the path graph $P_n$ into the Cayley graph.  
We often think of $\gamma$ as its image in $\Gamma(G, S)$, rather than as a homomorphism. \label{page:path}

For each edge $e = (u, v)$ in the Cayley graph, labelled by a generator $s$, the graph has an inverse edge $e^{-1}=(v, u)$ with label $s^{-1}$; 
similarly, given a path $\gamma$ in the graph from a vertex $u$ to a vertex $v$ we 
can assign to it a label $l(\gamma)$ in  $(S^\pm)^* $ given by the concatenation 
of the labels of edges in the path. Each such $\gamma$ has a natural inverse path $\gamma^{-1}$ whose label $l(\gamma^{-1})$ is the formal inverse $l(\gamma)^{-1}$ of $l(\gamma)$. 

Since $V\Gamma$ is in bijection with $G$ through the labelling function, we  will often abuse notation and simply refer to a vertex $v$ by its label 
$g=l(v)$.
 If the initial vertex for a path is fixed (e.g. the path starts at $1_G$) 
the set of words in $(S^\pm)^*$  is in bijection with the set of (not necessarily reduced) paths in the Cayley graph, so we will often interchangeably talk about a path $\gamma$ or its label $w=l(\gamma)$. 

Note that the length $n$ of the path $\gamma\colon P_n\to \Gamma(G, S)$ with label $l(\gamma)=w$ coincides with length $|w|$ of the word
$w$ in the free monoid.\mcomment{AD: reworded}
Given an index $0\leq i\leq |w|$, we denote by $w_i$ the $i$-th vertex of the 
path and by $w(i)$ the prefix of $w$ containing its first $i$ letters, that is, $w(i)=\gamma\vert_{P_i}$. When $\gamma$ is a path starting in $1_G$, this means that $w_i = \mu(w(i))$.

Lastly, taking the usual graph metric where each edge has length 1 we can think of the Cayley graph of a group $G$, and therefore of $G$ itself, as a metric space whose metric will be denoted by $d\colon G\times G\to \mathbb N$ throughout this text. 

\subsection{Convexity notions: } \label{subsection: convexity}
Taking into account the bijection between a group $G = \langle S\rangle$ and the vertices of the graph $\Gamma(G, S)$, it is natural to define a subset $K\subset G$ to be \emph{convex} when, for any pair of elements $g, h\in K$, the vertices of any geodesic in $\Gamma(G, S)$ between $g$ and $h$ are all contained in $K$. 

In practice, it is useful to work with a weaker condition than convexity:\mcomment{JB:  replaced complexity with convexity}
we define a subset  $K$ of $G$ to be \emph{quasi-convex} with constant $k$ if 
each geodesic in $\Gamma(G, S)$ connecting elements $g$ and $h$ of $K$ is contained 
in a $k$-neighbourhood of $K$ \cite[III.$\Gamma$.3]{bridson11}; 
we say that the subset is quasi-convex if such a constant $k$ exists. 
In particular, the study of quasi-convex subgroups has been fruitful in 
the last decades (see \cite{bridson11, dl21, bkl22}).

In the case where the group $G$ has a rational structure $L\subset (S^\pm)^*$, 
a more general concept is defined to work with subgroups of $G$: a subset $K$ 
is said to be \emph{$L$-quasi-convex} with constant $k$ if, for any $g\in K$,
 every  path in $\Gamma(G, S)$ from $1_G$ to $g$ defined by a word in $L$ 
(in other words, every $L$-representative of  $K$) lies in the $k$-neighbourhood of $K$ \cite{gs91, kmw17}. 
Note that this definition is particularly powerful 
when $K$ is a subgroup, 
as closure under inversion allows us to translate any path between elements $g$ and $h$ to a path between $1_G$ and the element $g^{-1}h$ in the subgroup. This is not the case for other subsets, e.g. rational subsets. In the case of subgroups, $L$-quasi-convexity is equivalent to $L$-recognisability \cite[Theorem 2.2]{gs91}.

Lastly, to point out some relations between quasi-convexity and $L$-quasi-convexity, let $L_{\texttt{geod}}$ be the language of geodesics in $\Gamma(G,S)$. 
If $L = L_{\texttt{geod}}$, then a subgroup is quasi-convex if and only if it is
 $L$-quasi-convex; if $L_{\texttt{geod}}\subseteq L$, then an $L$-quasi-convex 
subgroup must be quasi-convex; if $L \subseteq L_{\texttt{geod}}$ and $K$ is
a quasi-convex subset containing $1_G$, then $K$ must be  $L$-quasi-convex.
				
As we noted, the definition of $L$-quasi-convexity is very useful when working with subgroups but a generalisation is needed to work with the rational subsets
considered in this article: this motivates our definition of
$L$-{\goodness} in \cref{sec:Setup}.

\section{$L$-\goodness}\label{sec:Setup} 
Let $\left(G, L\right)$ be a rational structure for a finitely presented group $G = \langle S|R\rangle$  and let $\mu \colon (S^\pm)^* \to G$
be the projection from the free monoid  to $G$. 
Given $Q \subseteq (S^\pm)^*$ with $K=\mu(Q)$ a rational subset of $G$, our goal is to build an automaton that accepts the set
of $L$-representatives of $K$ .

This will not always be possible (note that subgroups are rational subsets, and the subgroup membership problem is not always decidable), but if $K$ is $L$-{\good}\  (as defined in \cref{def:GoodQ}) 
then there exists a procedure 
 that eventually produces such an automaton, 
as we will see in \cref{sec:Construction}. If the rational subset is a 
submonoid given by a finite generating set, then a weaker hypothesis of {\weakgoodness} 
will be sufficient ( as defined in \cref{def:WeaklyGoodQ}) 

\mcomment{AD: reworded}
The definition of $L$-{\goodness} involves not only a subset $K$ of $G$ 
but a regular subset of $(S^\pm)^*$ mapped by $\mu$ onto $K$. 
\mcomment{SR: rewrote defn, left in the terms `fellow travel' etc., not sure if we wanted to do that. AD: I think we did.}
 
\mcomment{SR: I don't know what the reference to Fig 2 means, right at the beginning of this definition; that needs to be explained}
\begin{defn}\label{def:GoodQ}
	 Let $(G, L)$ be a  rational structure for a group $G = \langle S \rangle$, 
and let $Q$ be a regular subset of $(S^\pm)^*$. \mcomment{AD: added $Q$  regular} 
We say that $Q$  is $L$-\emph{\good}
over \mcomment{AD: ``in $G$'' changed to ``over $G$''}
$G$ with constants $k$, $c$ if 
for every word $w$ in $(S^\pm)^*$ that
is an $L$-representative of an element in $\mu(Q)$, there exists a word $w'\in Q$ with $\mu(w) = \mu(w')$ such that 
the paths in the Cayley graph of $G$ based at $1_G$ and labelled $w$ and $w'$,  
 \emph{$k$-fellow travel up to at most $c$-reparametrisation}. 
 That is to say, there exists a function $f\colon [0, |w|] \to [0, |w'|]$ satisfying $f(0)=0$, $f(|w|)=|w'|$ and $0\leq f(i+1)-f(i)\leq c$, and such that, using the distance $d$ in the Cayley graph $\Gamma(G, S)$, $d(w_i, w'_{f(i)}) \leq k$ for all $0\leq i \leq |w'|$. (See Figure \cref{fig:our-L-qc}.)
	\begin{figure}[ht]
		\centering
\begin{tikzpicture}[thick,scale=.8,arrowmark/.style 2 args={decoration={markings,mark=at position #1 with \arrow{#2}}}]%
  \tikzstyle{every node}=[circle, draw, fill=blue, color=blue,
                        inner sep=0pt, minimum width=6pt]
 \begin{pgfonlayer}{background}                       
                        \draw[->,>=stealth,thick] (0,0) -- (2.9,1);
                        \draw[->,>=stealth,thick] (3,1)--(5.9,1);
                        \draw[->,>=stealth,thick] (6,1)--(8.9,0);
                        \draw[->,>=stealth,thick,decorate,decoration={snake, segment length=5mm, amplitude=.5mm}] (0,0)--(2.9,-2.9);
                        \draw[->,>=stealth,thick,decorate,decoration={snake, segment length=5mm, amplitude=.5mm}] (3,-3)--(5.8,-3);                                            \draw[->,>=stealth,thick,decorate,decoration={snake, segment length=5mm, amplitude=.5mm}] (6,-3)--(8.9,-.1);                      
\end{pgfonlayer}                      
\draw (0,0) ++(-.5,0) node[draw=none,fill=none, color=black] {$1_G$};
\draw (9,0) ++(+1.25,0) node[draw=none,fill=none, color=black] {$w_3=w'_{f(3)}$};
\foreach \x in {0,3}{
  \draw (\x*3,0) node (\x) {};
 }
 \foreach \x in {1,2}{
   \draw (\x*3,1) node (\x) {};
   \draw (\x*3,-3) node (-\x) {};
   \draw (\x*3,0) ++(0,1.5) node[draw=none,fill=none, color=black] {$w_{\x}$};
   \draw (\x*3,-3) ++(0,-.5) node[draw=none,fill=none, color=black] {$w'_{f(\x)}$};
 }
 \draw[<->,>=stealth] (3,.75) -- (3,-2.75);
 \draw[<->,>=stealth] (6,.75) -- (6,-2.75);
 \draw (1,-2) node[draw=none,fill=none, color=black]  {$\le c$};
 \draw (8,-2) node[draw=none,fill=none, color=black]  {$\le c$};
 \draw (4.5,-3.5) node[draw=none,fill=none, color=black]  {$\le c$};
 \draw (3.5,-1) node[draw=none,fill=none, color=black]  {$\le k$};
 \draw (6.5,-1) node[draw=none,fill=none, color=black]  {$\le k$};
 \draw[thick,->,>=stealth,decorate,decoration={bent,amplitude=15mm}] (.5,1.2) -- (8.5,1.2);
 \draw (4.5,3) node[draw=none,fill=none, color=black]  {$w$};
 \draw[thick,->,>=stealth,decorate,decoration={bent,amplitude=-30mm}] (.5,-2.75) -- (8.5,-2.75);
 \draw (4.5,-6) node[draw=none,fill=none, color=black]  {$w'$};
\end{tikzpicture}
          \caption{Illustration of the paths in the Cayley graph defined in \cref{def:GoodQ}.}
		\label{fig:our-L-qc}
	\end{figure}
\end{defn}


In particular, $w$ and $w'$ \emph{asynchronously fellow travel} 
(see \cite{echlpt92} for a definition),
with a (linear) restriction on their asynchronicity. A weaker notion of convexity, useful when working with submonoids of groups, is obtained by dropping the constant $c$:

\begin{defn}\label{def:WeaklyGoodQ}
  Let \mcomment{JB: specified that the function $f$ is non-decreasing. Changed a $\bar{w}$ to $w$.\\[1em]
  AD: added $Q$ regular}
 $(G, L)$ be a  rational structure for a group 
$G = \langle S \rangle$, and let 
 $ Q$ be a regular  subset of  $(S^\pm)^*$.   We say  that $Q$  is 
\emph{weakly}-$L$-\emph{\good} over $G$ with constant $k$ if 
for every word 
$w$ in $L\cap\mu^{-1}(\mu(Q))$ there is a word $w'\in Q$  
and a non-decreasing map $f\colon [0, |w|] \to [0, |w'|]$ satisfying $\mu(w) = \mu(w')$, $f(0)=0$, $f(|w|)=|w'|$  and, using the distance $d$ in the Cayley graph $\Gamma(G, S)$, $d(w_i, w'_{f(i)}) \leq k$ for all $0\leq i \leq |w'|$. 
In other words,
the paths in the Cayley graph of $G$ represented by $w'$ and $w$ must $k$-fellow travel asynchronously. 

\end{defn}

\begin{remark}
Note that we may have distinct subsets of $(S^\pm)^*$ that map onto the same 
subset of $G$, one of which is $L$-{\good} (or weakly $L$-{\good}) while the other is not.

	Take the group $\Z ^2=\langle a, b\;|\;[a, b]\rangle$ with rational structure $L = \{a,a^{-1}, b,b^{-1}\}^*$. The sets $Q_1 := L$ and $Q_2 := \{a^ib^j: i,j \in \Z\}$ both map onto $G$, but $Q_1$ is $L$-{\good} while $Q_2$ is
not.
	However,
 as we shall see in \cref{cor:GoodnessSubmonoidGenInd},
two finitely generated submonoids of 
$(S^\pm)^*$
that map to the same submonoid of $G$ are either both (weakly) $L$-{\good} or
both not (weakly) $L$-\good.
\end{remark}

Recognising dependence on the choice of $Q$, we define $L$-\goodness\
for a subset $K$ of $G$ as follows:
\begin{defn}\label{def:GoodK}
  Given  a rational structure $(G, L)$ for a group $G = \langle S \rangle$ 
and a regular subset $Q$ of  $(S^\pm)^*$ which is  \emph{(weakly-)$L$-{\good}}\ over $G$ with 
constants $k$, $c$, we say that  $K= \mu(Q)$ is \emph{(weakly-)$L$-{\good}}\ with respect to $Q$, with 
constants $k$, $c$.
\mcomment{SR: added `with respect to $Q$', so that we can put that in for clarification when we need it}
\end{defn}
We dedicate the rest of this section to understanding the relationship between weakly-$L$-{\good}, $L$-{\good}\ and $L$-quasi-convex subsets.
It is clear that $L$-{\good}\ subsets are always weakly-$L$-{\good}, 
and we shall prove below that a regular language $Q$ that is weakly-$L$-{\good}\ is also $L$-quasi-convex. Moreover, for a submonoid
given by a finite generating set $T$, if $T^*$ is weakly-$L$-{\good}\ it is also $L$-{\good}\ and, for finitely generated subgroups, the three notions of convexity are equivalent. Briefly, we observe the following equivalences.
\begin{itemize}
\item For a regular subset $Q$ of $(S^\pm)^*$ and rational subset $\mu(Q)$ of $G$,
  \mcomment{JB: reordered these by increasing specificity, and made the implications go left to right. \\
  Changed to refer to $Q$ instead of $\mu(Q)$.}
\begin{align*}
 Q \textrm{~is~}L\text{-\good} &\Longrightarrow  Q \textrm{~is~}\text{weakly-}L\text{-{\good}}\\ &\Longrightarrow   \mu(Q) \textrm{~is~}L\text{-quasi-convex}.
\end{align*}
\item For a finite subset  $T$ of $(S^\pm)^*$ and submonoid $\mu(T^*)$ of $G$, \mcomment{AD: changed from $\mu(T^*)\subseteq G$ to $T$}
\begin{align*}
  T^* \textrm{~is~}L\text{-\good} &\Longleftrightarrow  T^*\textrm{~is~}\text{weakly-}L\text{-{\good}}\\ &\Longrightarrow \mu(T^*) \textrm{~is~} L\text{-quasi-convex}.
\end{align*}
\item For a finite subset $T$ of $(S^\pm)^*$ and subgroup $\mu((T^\pm)^*)$ of $G$, \mcomment{JB: changed $T^*$ to $(T^\pm)^*$ and replaced $\mu((T^\pm)^*) \le G$ by $T$.}
\begin{align*}
  (T^\pm)^*\textrm{~is~}L\text{-\good} &\Longleftrightarrow (T^\pm)^*\textrm{~is~} \text{weakly-}L\text{-{\good}}\\ &\Longleftrightarrow \mu((T^\pm)^*) \textrm{~is~}  L\text{-quasi-convex}.
\end{align*}
\end{itemize}
\mcomment{AD: changed $\pi$ to  $\mu$ above}
\cref{prop:Lgood_is_stronger_rational_subsets}, 
\cref{thm:weaklyLgood_implies_Lgood_submonoids}
and \cref{prop:Lgood_equiv_Lqc_subgroups},
which follow,
provide detail and justification for these observations.
\mcomment{SR: reworded text around the three propositions. In the end I put the proof of the first after its statement, realising that the figures already took up space that broke up the three statements, and the figures were anyway not necessarily placed next to the propositions.}

\begin{prop}\label{prop:Lgood_is_stronger_rational_subsets}
	Let  $G=\langle S\rangle$ be a finitely generated group with rational structure $(G, L)$ and let $Q\subset (S^\pm)^*$ be a regular set. If $Q$ is 
weakly-$L$-\good\ over $G$ with constant $k$ then it is also $L$-quasi-convex with constant $k'=k+|V|$, where $|V|$ denotes the number of vertices in the 
minimal automaton accepting $Q$.
\end{prop}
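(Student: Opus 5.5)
Let $w$ be an $L$-representative of an element $g\in K=\mu(Q)$. The plan is to show that every vertex $w_i$ of the path labelled $w$ lies within $k+|V|$ of $K$, and hence within $k'$ of an element of $K$. By weak $L$-\goodness\ (\cref{def:WeaklyGoodQ}) there is a word $w'\in Q$ with $\mu(w')=g$ and a non-decreasing $f$ such that $d(w_i,w'_{f(i)})\le k$ for every $i$. It is therefore enough to prove the following claim: every vertex $w'_j$ on the path of a word $w'\in Q$ lies within distance $|V|$ of some element of $K$. The triangle inequality then gives $d(w_i,K)\le d(w_i,w'_{f(i)})+d(w'_{f(i)},K)\le k+|V|$.

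To prove the claim, let $\A$ be the minimal automaton accepting $Q$, with state set $V$. Since $w'\in Q$, reading $w'$ from the start state gives a path in $\A$ ending at an accept state. Let $q$ be the state reached after reading the prefix $w'(j)$. The suffix of $w'$ labels a path from $q$ to an accept state, so some such path exists. Take a shortest path in $\A$ from $q$ to an accept state. It visits no state twice, so its label $u$ has $|u|\le |V|-1<|V|$. Then $w'(j)u$ is accepted by $\A$, so $w'(j)u\in Q$ and $\mu(w'(j)u)\in K$. In the Cayley graph, $\mu(w'(j)u)$ is joined to $w'_j=\mu(w'(j))$ by the path labelled $u$, so $d(w'_j,K)\le |u|\le |V|$. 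This proves the claim.

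The argument uses only that $\A$ is trim, and in particular that every state reached while reading a word of $Q$ can reach an accept state; this is automatic along an accepted path. So no step is a real obstacle. The main point needing care is to check that the definition of $L$-quasi-convexity only asks for the vertices of paths labelled by $L$-representatives to lie near $K$, which is exactly what we bound. One should also note that, although the statement in \cref{def:WeaklyGoodQ} writes the index range as $0\le i\le|w'|$, the bound we need is for all $0\le i\le|w|$, which is the intended reading since $f$ is defined on $[0,|w|]$.
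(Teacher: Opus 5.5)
Your proof is correct and follows the paper's argument essentially verbatim. Like the paper, you pass from $w_i$ to the vertex $w'_{f(i)}$ of a word $w'\in Q$, then complete the prefix $w'(f(i))$ to a word of $Q$ by a path of length less than $|V|$ in the minimal automaton, giving $d(w_i,K)\le k+|V|$. Your observation that only co-accessibility of states along an accepted path is needed, rather than trimness of the whole automaton, is a harmless refinement.
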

\begin{proof}
	Let $w$ be a word in $L\cap \mu^{-1}(K)$ and let $w'$ be the 
associated word in $Q$ with which we are provided because of $K$ being 
weakly-$L$-\good. 
We know that, for $0< i< |w|$, each vertex $w_i$ is at distance at most 
$k$ from the vertex $w'_{f(i)}$ in $\Gamma(G, S)$, and if the prefix 
words $w'(f(i))$ were in $Q$ this would already show that $K$ is 
$L$-quasi-convex. Even though this is not necessarily true, 
something very close to this does hold, as follows.
\mcomment{SR: reworded. I hope this is what Lucia intended. And a little minor rewording below.}
 We claim that there exist  
words $u(i)$ for $0< i< |w|$, each 
of length less than $|V|$, so that the concatenation $x(i) = w'(f(i))u(i)$ 
belongs to $Q$ and therefore, if we see  $x(i)$  as a path in 
$\Gamma(G,S)$ starting at $1_G$, $w_i$ is at distance at most $k+|V|$ from the final vertex of $x(i)$. 
	
	To see that such a word $u(i)$ exists, let $\A_Q$ be the minimal, 
trim automaton associated to $Q$; note that each prefix word $w'(f(i))$ labels a path in  $\A_Q$ starting at the initial state and finishing at a certain state $v$ that is not necessarily accepting. Since $\A_Q$ is trim, there is a sequence of transitions in the automaton, starting at $v$ and ending in an accepting state, and since $\A_Q$ has a finite number $|V|$ of states, this sequence of transitions can be chosen to be of length at most $|V|$. We just need to take $u(i)$ to be the word labelled by this sequence of transitions.
\end{proof}

\begin{prop}\label{thm:weaklyLgood_implies_Lgood_submonoids}
  Let $G=\langle S\rangle$ be a finitely generated group and $(G, L)$ a rational structure as above. Let $T$ be a finite subset of $(S^\pm)^*$ such that $T^*$ is weakly $L$-\good\ with constant $k$, and let $M=\mu(T^*)$ be the induced submonoid of $G$.
 Define constants $\ell$ and $c$ by
  \begin{flalign*}
    \ell= &\max\{|m|_{S}\colon \: m\in T \},\\
                 c = &\max\{|m|_{T} \colon \: m\in M \textrm{ and }\: |m|_{S} \leq 2(k + \ell)+1\}.
\end{flalign*}
Then the submonoid $M$ is $L$-\good, with respect to $Q=T^*$ (\cref{def:GoodK}), \mcomment{AD: added wrt $T^*$}  with constants $k'=k+\ell$, $c' = c\ell$.
\end{prop}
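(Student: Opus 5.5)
Take $w\in L\cap\mu^{-1}(M)$. By weak $L$-\goodness\ of $T^*$ there are $w'=t_1t_2\cdots t_m\in T^*$ (each $t_j\in T$) with $\mu(w)=\mu(w')$ and a non-decreasing $f\colon[0,|w|]\to[0,|w'|]$ with $f(0)=0$, $f(|w|)=|w'|$ and $d(w_i,w'_{f(i)})\le k$ for all $i$. The difficulty is that $f$ may jump by large amounts. My plan is to replace $w'$ by another word $w''\in T^*$, representing the same element, which \emph{coarsens} $w'$ along the path $w$. I will build $w''$ by selecting break points of $w'$ between generators, namely the points $p_j=\mu(t_1\cdots t_j)$, and re-expressing the group elements between consecutive selected break points as short $T$-words.

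First, for each $i$, $w'_{f(i)}$ lies inside some block $t_{j+1}$. Let $q(i)=j$ be the index of the preceding break point, so that $d(w'_{f(i)},p_{q(i)})\le \ell$ and hence $d(w_i,p_{q(i)})\le k+\ell$. I normalise with $q(0)=0$ and $q(|w|)=m$; at the end this is forced because $f(|w|)=|w'|$. Since $f$ is non-decreasing, $q$ is non-decreasing. For consecutive $i$ we get
\[
d(p_{q(i)},p_{q(i+1)})\le (k+\ell)+1+(k+\ell)=2(k+\ell)+1 .
\]
The element $g_i=p_{q(i)}^{-1}p_{q(i+1)}$ equals $\mu(t_{q(i)+1}\cdots t_{q(i+1)})$, so it lies in $M$ and satisfies $|g_i|_S\le 2(k+\ell)+1$. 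By the definition of $c$ it can therefore be written as a product $v_i$ of at most $c$ elements of $T$. (Finiteness of $c$ holds because only finitely many elements of $G$ lie in a ball, and each element of $M$ has a finite $T$-length.)

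Now set $w''=v_0v_1\cdots v_{|w|-1}\in T^*$. It satisfies $\mu(w'')=p_m=\mu(w)$. Define $F(i)=|v_0\cdots v_{i-1}|$. Then $F(0)=0$, $F(|w|)=|w''|$, and $0\le F(i+1)-F(i)=|v_i|\le c\ell=c'$. The vertex $w''_{F(i)}$ is exactly $p_{q(i)}$, which gives $d(w_i,w''_{F(i)})\le k+\ell=k'$. This yields \goodness\ with constants $k'$ and $c'$.

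The main care point is the normalisation at the endpoints together with the requirement that $F$ be defined on the integer grid. One issue is that $q(|w|)=m$ needs $f(|w|)=|w'|$, which is a break point, so choosing "the preceding break point" gives $m$ there. The other is that the bound $d(w'_{f(i)},p_{q(i)})\le\ell$ uses $|t|\le\ell$ for $t\in T$. I also read $|m|_S$ for $m\in T$ as word length, which is at most the geodesic length.
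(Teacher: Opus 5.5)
Your proof is correct and follows essentially the same route as the paper's. The paper rounds $f(i)$ \emph{up} to the next $T$-prefix of $w'$ rather than down to the preceding one, and then, exactly as you do, rewrites the element between consecutive chosen break points (which has $S$-length at most $2(k+\ell)+1$) as a $T$-word of $T$-length at most $c$, giving the constants $k+\ell$ and $c\ell$. One small slip: word length is at least, not at most, the geodesic length; but you use word length consistently, and that is what the bounds $\ell$ and $c\ell$ require.
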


\mcomment{JB: Reformulated the statement for clarity. Changed $l$ to $\ell$ and $c_1$ to $c$. The constants of $L$-proximacy are now $k'$ and $c'$. Note that $k'=k+\ell$, not $k$; I hope that doesn't matter. Added proof. Commented out figure for now; it will need adjusting if we want to use it.}

\begin{proof}
  (Illustration in \cref{fig:wLqcu_is_Lqcu}.)
Let $w$ be an $L$-representative of an element of $M$. Since $T^*$ is weakly $L$-\good, there exists $w'$ in $T^*$ such that $w$ asynchronously $k$-fellow travels with $w'$, as described in \cref{def:WeaklyGoodQ}. Let $f$ be the non-decreasing function specified in that definition. Since $w'$ is a concatenation $t_1\cdots t_u$ of words from $T$, we may consider the prefixes $t_1\cdots t_v$ for $v\le u$; call these the $T$-prefixes of $w'$.   
For each $i$ in $[0,|w|]$, define $g(i)$ to be the least number no less than $f(i)$, such that $w'_{g(i)}$ is a $T$-prefix of $w'$. Since $d_S(w_i,w'_{f(i)})\le k$ and $d_S(w'_{f(i)},w'_{g(i)})\le \ell$, we see that $d_S(w_i, w'_{g(i)})$ is bounded above by $k+\ell$. Now 
if $i<|w|$, then $w'_{g(i)}$ is a left divisor in $T^*$ of $w'_{g(i+1)}$ (since $g$ is a non-decreasing function). So there exists $x_{i+1}\in T^*$ such that $w'_{g(i)}x_{i+1} = w'_{g(i+1)}$, and since $d_S(w'_{g(i)},w'_{g(i+1)}) \le 2(k+\ell)+1$, we may assume that $x_{i+1}$ has $T$-length at most $c$. Let $w''$ be the word $x_1\cdots x_{|w|}$ in $T^*$, and let $w''_i$ be the prefix $x_1\cdots x_i$. For each $i$ we have $w'_{g(i)} = w''_i$, and it follows that $w$ asynchronously $(k+\ell)$-fellow travels with $w''$. But since the word $x_{i+1}$ has $T$-length at most $c$, we see that its $S$-length is at most $c\ell$, from which the result follows.
\end{proof}
\begin{figure}[ht]
		\centering
\begin{tikzpicture}[thick,scale=.8,arrowmark/.style 2 args={decoration={markings,mark=at position #1 with \arrow{#2}}}]%
  \tikzstyle{every node}=[circle, draw, fill=blue, color=blue,
                        inner sep=0pt, minimum width=6pt]
 \begin{pgfonlayer}{background}                       
                        \draw[->,>=stealth,thick] (0,0) -- (2.9,1);
                        \draw[->,>=stealth,thick] (3,1)--(5.9,1);
                        \draw[->,>=stealth,thick] (6,1)--(8.9,0);
\end{pgfonlayer}                      
\draw (0,0) ++(-.5,0) node[draw=none,fill=none, color=black] {$1_G$};
\draw (10,0) ++(+1.25,0) node[draw=none,fill=none, color=black] {$w_3=w'_{f(3)}=w'_{g(3)}$};
\draw (1,-2) node[circle, draw, fill=black, color=black, inner sep=0pt, minimum width=3pt] (t1) {};
\draw (4,-2.5) node[circle, draw, fill=black, color=black, inner sep=0pt, minimum width=3pt,label={[label distance=1pt, color=black]0:{$w'_{g(1)}$}}] (t2) {};
\draw (5.5,-1.5) node[circle, draw, fill=black, color=black, inner sep=0pt, minimum width=3pt] (t3) {};
\draw (7.5,-3.5) node[circle, draw, fill=black, color=black, inner sep=0pt, minimum width=3pt,label={[label distance=1pt, color=black]0:{$w'_{g(2)}$}}] (t4) {};
\draw (.7,-1.4) node[draw=none,fill=none, color=black] {\scriptsize{$t_{i_1}$}};
\draw (2.5,-3.3) node[draw=none,fill=none, color=black] {\scriptsize{$t_{i_2}$}};
\draw (5,-1.5) node[draw=none,fill=none, color=black] {\scriptsize{$t_{i_3}$}};
\draw (6.6,-3.6) node[draw=none,fill=none, color=black] {\scriptsize{$t_{i_4}$}};
\draw (8.5,-.9) node[draw=none,fill=none, color=black] {\scriptsize{$t_{i_5}$}};
\foreach \x in {0,3}{
  \draw (\x*3,0) node (\x) {};
}
 \foreach \x in {1,2}{
   \draw (\x*3,1) node[label={[label distance=0pt, color=black]90:{$w_{\x}$}}] (\x) {};
   \draw (\x*4.5-3,-\x*.5-2) node[label={[label distance=-3pt, color=black]250:{$w'_{f(\x)}$}}] (-\x) {};
 }
 \draw (1.5,-2.5) node {};
  \draw[<->,>=stealth, shorten >=.5pt, shorten <=.5pt] (1) -- (-1);
 \draw[<->,>=stealth, shorten >=.5pt, shorten <=.5pt] (2) -- (-2);
  \draw[<->,>=stealth, shorten >=.5pt, shorten <=.5pt] (-1) -- (t2);
  \draw[<->,>=stealth, shorten >=.5pt, shorten <=.5pt] (-2) -- (t4);
 \draw (2.6,-1) node[draw=none,fill=none, color=black]  {\scriptsize{$\le k$}};
 \draw (6.4,-1.7) node[draw=none,fill=none, color=black]  {\scriptsize{$\le k$}};
 \draw (4,0) node[draw=none,fill=none, color=black]  {\scriptsize{$\le k+l$}};
 \draw (7,0) node[draw=none,fill=none, color=black]  {\scriptsize{$\le k+l$}};
 \draw (2.7,-2.2) node[draw=none,fill=none, color=black]  {\scriptsize{$\le l$}};
 \draw (6.75,-2.9) node[draw=none,fill=none, color=black]  {\scriptsize{$\le l$}};
  \begin{pgfonlayer}{background}     
 \draw[->,>=stealth,thick] (0) to[out=330, in=90, looseness=1.5] (t1);
 \draw[->,>=stealth,thick] (t1) to[out=315, in=210, looseness=1.4] (t2);
 \draw[->,>=stealth,thick] (t2) to[out=80, in=135, looseness=1.4] (t3);
 \draw[->,>=stealth,thick] (t3) to[out=270, in=180, looseness=1.2] (t4);
 \draw[->,>=stealth,thick] (t4) to[out=90, in=220, looseness=1.4] (3);
 \draw[->,>=stealth,thick,dashed,shorten >=.5pt] (0) to [out=245, in=235, looseness=2] (t2);
 \draw[->,>=stealth,thick,dashed,shorten >=.5pt] (t2) to [out=300, in=225, looseness=1.4] (t4);
 \draw[->,>=stealth,thick,dashed,shorten >=.5pt] (t4) to [out=45, in=270, looseness=1.4] (3);
 \end{pgfonlayer}
 \draw (1.5,-4.5) node[draw=none,fill=none, color=black]  {$x_1$};
 \draw (6,-4.7) node[draw=none,fill=none, color=black]  {$x_2$};
 \draw (9.1,-2.5) node[draw=none,fill=none, color=black]  {$x_3$};
\draw[<->,>=stealth, shorten >=.5pt, shorten <=.5pt] (1) -- (t2);
\draw[<->,>=stealth, shorten >=.5pt, shorten <=.5pt] (2) -- (t4);
 \draw[thick,->,>=stealth,decorate,decoration={bent,amplitude=15mm}] (.5,1.2) -- (8.5,1.2);
 \draw (4.5,3) node[draw=none,fill=none, color=black]  {$w$};
 \draw (4.5,-5.5) node[draw=none,fill=none, color=black]  {$w'=t_{i_1}t_{i_2}t_{i_3}t_{i_4}t_{i_5}$};
 \draw (4.5,-6) node[draw=none,fill=none, color=black]  {$w''=x_1x_2x_3$};
\end{tikzpicture}
\caption{The paths defined in the proof of \cref{thm:weaklyLgood_implies_Lgood_submonoids}.
}
\label{fig:wLqcu_is_Lqcu}
\end{figure}

\begin{prop}\label{prop:Lgood_equiv_Lqc_subgroups} Let $G=\langle S\rangle$ be a finitely generated group and $(G, L)$ a rational structure.
Let $T$ be a finite subset of $(S^\pm)^*$, and let $H$ be the subgroup of $G$ generated by $T$. \mcomment{AD: replaced $\mu(T)$ by $T$} 
Define constants $\ell$ and $c$ by
	\begin{flalign*}
			\ell = &\max\{ |h|_{S} \colon \: h\in T \}, \\
		c = &\max\{ |h|_{T} \colon \: h\in H \textrm{ and } |h|_{S} \leq 2k+1\}.
	\end{flalign*}\mcomment{JB: reformulated, and changed $c_1$ and $c_2$ to $c$ and $\ell$, for consistency with \cref{thm:weaklyLgood_implies_Lgood_submonoids}.}
	If $H$ is $L$-quasi-convex with constant $k$, then
        \mcomment{JB: changed $T^*$ to $(T^\pm)^*$.}
$H$ is $L$-\good\ with respect to $Q=(T^\pm)^*$, with constants $k$ and $c'=c\ell$. \mcomment{AD: put statement and proof in terms of \cref{def:GoodK} and \cref{sec:cayley}}
\end{prop}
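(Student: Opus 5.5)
Take $w\in L\cap\mu^{-1}(H)$ of length $n$. The plan is to build a word $w''\in (T^\pm)^*$ directly from the vertices of $w$, with no intermediate word $w'$. This is the argument of \cref{thm:weaklyLgood_implies_Lgood_submonoids}, except that $L$-quasi-convexity takes the place of weak proximacy, and closure of $H$ under inverses lets us skip the $T$-prefix adjustment that cost an extra $\ell$ there.

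First, by $L$-quasi-convexity with constant $k$, each vertex $w_i$ ($0\le i\le n$) lies within distance $k$ of some $h_i\in H$. At the endpoints we choose $h_0=1_G=w_0$ and $h_n=\mu(w)=w_n$, which is allowed because both lie in $H$. Next, for $0\le i<n$, put $g_{i+1}=h_i^{-1}h_{i+1}$. This element lies in $H$ because $H$ is a subgroup; this is exactly the step that fails for monoids. Its length satisfies
\[
|g_{i+1}|_S=d(h_i,h_{i+1})\le d(h_i,w_i)+d(w_i,w_{i+1})+d(w_{i+1},h_{i+1})\le 2k+1 .
\]
So by the definition of $c$ there is a word $x_{i+1}\in (T^\pm)^*$ of $T$-length at most $c$ representing $g_{i+1}$. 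Its $S$-length is at most $c\ell$, because elements of $T^{-1}$ have the same length as elements of $T$.

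Now set $w''=x_1x_2\cdots x_n\in (T^\pm)^*$ and define $f(i)=|x_1\cdots x_i|$. Then $f(0)=0$ and $f(n)=|w''|$, and $0\le f(i+1)-f(i)=|x_{i+1}|\le c\ell$. Moreover $\mu(x_1\cdots x_i)=h_0^{-1}h_i=h_i$, so $w''_{f(i)}=h_i$ and $d(w_i,w''_{f(i)})\le k$ for every $i$. In particular $\mu(w'')=h_n=\mu(w)$. Hence $w$ and $w''$ $k$-fellow travel up to $c\ell$-reparametrisation, as \cref{def:GoodQ} requires, giving constants $k$ and $c'=c\ell$.

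I expect no real obstacle. The only points needing care are the choices $h_0=1_G$ and $h_n=\mu(w)$, which make the endpoints and the equality $\mu(w'')=\mu(w)$ work, and checking that $c$ is finite: only finitely many $h\in H$ have $|h|_S\le 2k+1$, and each is a finite product of elements of $T^\pm$. The reverse implications mentioned in the summary, from proximacy to weak proximacy and from weak proximacy to $L$-quasi-convexity, are already covered by the definitions and \cref{prop:Lgood_is_stronger_rational_subsets}.
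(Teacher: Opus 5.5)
Your proof is correct and follows essentially the same route as the paper: choose $h_i\in H$ within distance $k$ of $w_i$ with $h_0=1_G$ and $h_n=\mu(w)$, represent each $h_i^{-1}h_{i+1}$ (whose $S$-length is at most $2k+1$) by a word over $T^\pm$ of $T$-length at most $c$, concatenate these words, and read off the reparametrisation from the prefix lengths. You also check that $c$ is finite, since only finitely many elements lie in the $(2k+1)$-ball and each is a finite product of elements of $T^\pm$; the paper leaves this implicit.
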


\begin{proof}
Let $w$ be an $L$-representative of an element of $H$. For $j\le |w|$ (following the conventions of \cref{sec:cayley}) let $w(j)$ be the prefix of $w$ of length $j$. Then $w_j=\mu(w(j))$ lies in the $k$-neighbourhood of $H$ in $\Gamma(G,S)$. Let $h_j$ be an element of $H$ such that
$d_S(w_j,h_j)\le k$, with $h_0=1$ and $h_{|w|}=\mu(w)$.  For $1\le j\le |w|$, define $t_j$ to be a word of minimal $T$-length in $(T^{\pm})^*$ such that $\mu(t_j)=h_{j-1}^{-1}h_j$.  
Then setting $q_j= t_1 \cdots t_j$ for each $j$ (with $q_0=\epsilon$), we see that $\mu(q_j)=h_j$ for all $j$. Since $d_S(h_j,h_{j+1})\le 2k+1$, we see that $|h_j^{-1}h_{j+1}|_S\le 2k+1$, and hence that $|h_j^{-1}h_{j+1}|_T\le c$. Since the elements $t_j$ have minimal $T$-length by assumption, it follows that $|t_j|_S\le c\ell$ for all $j$. So we see that
$w$ $k$-fellow travels with $q_{|w|}$ up to at most $c'$-reparametrization, and so $H$ is $L$-{\good} with constants $k$ and $c'$. \mcomment{JB: added this proof. Commented out the figure for now; I don't quite see how to adjust it. AD: added $t$'s and removed $\le$'s, and put the figure back.}
\end{proof}

	\begin{figure}[ht]
		\centering
\begin{tikzpicture}[thick,scale=.8,arrowmark/.style 2 args={decoration={markings,mark=at position #1 with \arrow{#2}}}]%
  \tikzstyle{every node}=[circle, draw, fill=blue, color=blue,
                        inner sep=0pt, minimum width=6pt]
 \begin{pgfonlayer}{background}                       
                        \draw[->,>=stealth,thick] (0,0) -- (2.9,1);
                        \draw[->,>=stealth,thick] (3,1)--(5.9,1);
                        \draw[->,>=stealth,thick] (6,1)--(8.9,0);
                        \draw[->,>=stealth,thick,decorate,decoration={snake, segment length=5mm, amplitude=.5mm}] (0,0)--(2.9,-2.9);
                        \draw[->,>=stealth,thick,decorate,decoration={snake, segment length=5mm, amplitude=.5mm}] (3,-3)--(5.8,-3);                                               \draw[->,>=stealth,thick,decorate,decoration={snake, segment length=5mm, amplitude=.5mm}] (6,-3)--(8.9,-.1);
                        
\end{pgfonlayer}                      
\draw (0,0) ++(-1,0) node[draw=none,fill=none, color=black] {$w_0=h_0$};
\draw (9,0) ++(+1.25,0) node[draw=none,fill=none, color=black] {$w_3=h_3$};
\draw (2.2,-2.5)  node[draw=none,fill=none, color=black] {\small{$t_1$}};
\draw (5,-3.3) node[draw=none,fill=none, color=black] {\small{$t_2$}};
\draw (8.5,-1)  node[draw=none,fill=none, color=black] {\small{$t_3$}};
\foreach \x in {0,3}{
  \draw (\x*3,0) node (\x) {};
 }
 \foreach \x in {1,2}{
   \draw (\x*3,1) node (\x) {};
   \draw (\x*3,-3) node (-\x) {};
   \draw (\x*3,0) ++(0,1.5) node[draw=none,fill=none, color=black] {$w_{\x}$};
   \draw (\x*3,-3) ++(0,-.5) node[draw=none,fill=none, color=black] {$h_{\x}$};
 }
 \draw[<->,>=stealth] (3,.75) -- (3,-2.75);
 \draw[<->,>=stealth] (6,.75) -- (6,-2.75);
 \draw (3.5,-1) node[draw=none,fill=none, color=black]  {$\le k$};
 \draw (6.5,-1) node[draw=none,fill=none, color=black]  {$\le k$};
 \draw[thick,->,>=stealth,decorate,decoration={bent,amplitude=15mm}] (.5,1.2) -- (8.5,1.2);
 \draw (4.5,3) node[draw=none,fill=none, color=black]  {$w$};
 %
\end{tikzpicture}                
\caption{The paths defined in \cref{prop:Lgood_equiv_Lqc_subgroups}}
		\label{fig:Lqc_generalisation}
              \end{figure}

At the end of \cref{sec:Notation} we pointed out that a subgroup is $L$-quasi-convex if and only if it is $L$-recognisable. Similarly, we have the following.
\begin{prop}\label{prop:goodrec}
  Given a group $G=\langle S\rangle$ and a rational structure $(G, L)$, where $L\subseteq (S^\pm)^*$,
  a rational subset $K$ of $G$ is $L$-\good, with respect to some \mcomment{AD: added more detail, following comments of L and J}
  regular subset of $(S^\pm)^*$, if and only if it is $L$-recognisable.
\end{prop}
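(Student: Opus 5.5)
We prove the two implications separately. The reverse implication is immediate from the definitions. The forward implication uses a standard ``fellow-traveller automaton'' construction, in the spirit of the proof that asynchronous fellow travelling with bounded asynchronicity preserves regularity.

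\emph{($\Leftarrow$)} Suppose $K$ is $L$-recognisable.
\begin{enumerate}[(i)]
\item Take $Q = L\cap \mu^{-1}(K)$. This is regular by hypothesis, and $\mu(Q)=K$ because $\mu|_L$ is surjective.
\item For any $L$-representative $w$ of an element of $K$ we have $w\in Q$, so we may choose $w'=w$ and $f$ the identity.
\item Then $d(w_i,w'_{f(i)})=0$ and $f(i+1)-f(i)=1$, so $Q$ is $L$-\good\ with constants $k=0$, $c=1$.
\end{enumerate}

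\emph{($\Rightarrow$)} Suppose $Q$ is regular, $K=\mu(Q)$, and $Q$ is $L$-\good\ with constants $k,c$. We build a non-deterministic finite state automaton $\mathcal{C}$ over $S^\pm$ that accepts exactly $L\cap\mu^{-1}(K)$.

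Let $\A_L=(S^\pm,V_L,E_L,F_L,p_0)$ and $\A_Q=(S^\pm,V_Q,E_Q,F_Q,q_0)$ be automata for $L$ and $Q$, and let $B_k$ be the (finite) ball of radius $k$ about $1_G$ in $\Gamma(G,S)$. The states of $\mathcal{C}$ are triples $(p,q,g)\in V_L\times V_Q\times B_k$, with initial state $(p_0,q_0,1_G)$; the third coordinate is meant to record $w_i^{-1}w'_{f(i)}$.

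Transitions are as follows. For each letter $a\in S^\pm$ and each word $u\in (S^\pm)^*$ with $|u|\le c$ (possibly $u=\emptyword$), there is an $a$-labelled transition $(p,q,g)\to(p',q',g')$ whenever:
\begin{itemize}
\item $p\xrightarrow{a}p'$ in $\A_L$;
\item $u$ labels a path $q\to q'$ in $\A_Q$;
\item $g'=\mu(a)^{-1}\,g\,\mu(u)$ lies in $B_k$.
\end{itemize}
There are only finitely many such $u$ and finitely many states, so $\mathcal{C}$ is a finite automaton. The accepting states are those $(p,q,1_G)$ with $p\in F_L$ and $q\in F_Q$.

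\emph{Soundness.} Suppose $w$ is accepted, and let $u_1,\dots,u_{|w|}$ be the words guessed along an accepting run. Then $w\in L$ and $w'=u_1\cdots u_{|w|}\in Q$. By induction on $i$, the third coordinate after reading $i$ letters equals $\mu(w(i))^{-1}\mu(u_1\cdots u_i)$. At the end this coordinate is $1_G$, so $\mu(w)=\mu(w')\in K$.

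\emph{Completeness.} Given $w\in L\cap\mu^{-1}(K)$, proximacy provides $w'\in Q$ and a map $f$ with increments in $[0,c]$. Take $u_i$ to be the subword of $w'$ between positions $f(i-1)$ and $f(i)$, so $|u_i|\le c$.
\begin{itemize}
\item The condition $d(w_i,w'_{f(i)})\le k$ says exactly that the third coordinate stays in $B_k$ at every step.
\item $f(0)=0$ matches the initial state.
\item $f(|w|)=|w'|$ together with $\mu(w)=\mu(w')$ makes the final state accepting.
\end{itemize}
If $w=\emptyword$, then $w'$ must also be empty, and acceptance reduces to $\emptyword\in L\cap Q$.

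The main point to check is finiteness. The bound $c$ on $f(i+1)-f(i)$ is what allows each step of $w'$ to be guessed as a bounded word $u$, giving finitely many transitions. The fellow-travelling constant $k$ confines the ``difference'' coordinate to the finite ball $B_k$. This argument shows regularity (existence of $\mathcal{C}$) only. Making $\mathcal{C}$ constructively requires computing products in $B_k$, for instance via a solution to the word problem.
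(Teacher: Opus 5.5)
Your proof is correct, and your forward direction takes a genuinely different route from the paper's. Your reverse direction is the same as the paper's: take $Q=L\cap\mu^{-1}(K)$ with $k=0$, $c=1$.

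For the forward direction the paper appeals to \cref{thm:algorithm_terminates}. It runs the folding procedure, attaching relator loops at every state and folding with Benois' algorithm. It shows that after $n$ rounds every $L$-representative of $K$ lies in $Q_n$. Here $n$ bounds the number of rewriting steps needed to pass from a word of length at most $c$ to an equal word of length at most $2k+1$. The paper then concludes that $L\cap\mu^{-1}(K)=L\cap Q_n$ is regular.

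You instead build a product automaton directly from $\A_L$, $\A_Q$ and the ball $B_k$, guessing at each step the block of $w'$ of length at most $c$ that is read.

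Your route is more elementary and more general. It needs only that $S$ is finite, so that $B_k$ is finite. It does not need $G$ to be finitely presented, which \cref{thm:algorithm_terminates} assumes even though the proposition as stated does not. It also gives an explicit bound $|V_L|\,|V_Q|\,|B_k|$ on the number of states.

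What the paper's route buys is uniformity. The sequence $\A_n$ is built from $\A_0$ and the relators alone, without knowing $k$, $c$ or the multiplication on $B_k$. That is what lets it be combined with the halting test of Section~5 to give an actual algorithm for finitely generated submonoids of automatic groups. Your automaton, by contrast, can only be written down once $k$ and $c$ are known and the multiplication on the relevant ball can be computed.
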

One direction is immediate, as $L$-recognisability means that $L\cap \mu^{-1}(K)$ is a regular language $Q$, and $Q$ is by definition $L$-\good\ with constants $k = 0$, $c = 1$. The other direction of the statement will be proved 
constructively 
in \cref{thm:algorithm_terminates}. As a consequence of this result, 
\cref{prop:Lgood_is_stronger_rational_subsets} and \cref{thm:weaklyLgood_implies_Lgood_submonoids}
 we have the following. \mcomment{AD: added this which appears to change the next proposition to an iff statement. }
\mcomment{SR: replaced `we have:' by `we have the following'}
\begin{prop}\label{proposition:SubmonoidGoodRecognisable}
  Let  $G=\langle S\rangle$ be a group with rational structure $(G, L)$,   where $L\subseteq (S^\pm)^*$ and let $T$ be a finite subset
  of $(S^\pm)^*$.
  Then the submonoid 
$\mu(T^*)$ of $G$ is weakly-$L$-\good, with respect to some regular subset of $(S^\pm)^*$, 
if and only if 
it is $L$-recognisable.
\mcomment{AD: and again: added more detail, following comments of L and J}
\end{prop}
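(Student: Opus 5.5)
We prove the two implications separately. In each we pass between weak \goodness\ and full \goodness, and use \cref{prop:goodrec} to link \goodness\ with $L$-recognisability.

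\emph{Recognisable implies weakly \good.} Suppose $M=\mu(T^*)$ is $L$-recognisable. Then $Q:=L\cap\mu^{-1}(M)$ is a regular subset of $(S^\pm)^*$ with $\mu(Q)=M$, since $L$ maps onto $G$. Every $L$-representative $w$ of an element of $M$ lies in $Q$. Taking $w'=w$ and $f$ the identity shows that $Q$ is $L$-\good\ with constants $k=0$, $c=1$. This is the easy direction of \cref{prop:goodrec}. An $L$-\good\ set is weakly-$L$-\good\ with the same $k$: we simply forget the bound $c$, since the identity map is non-decreasing. So $M$ is weakly-$L$-\good\ with respect to the regular set $Q$.

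\emph{Weakly \good\ implies recognisable.} This is the substantive direction, and its main obstacle is that the hypothesis gives weak \goodness\ only with respect to \emph{some} regular set $Q$. \cref{thm:weaklyLgood_implies_Lgood_submonoids}, however, needs weak \goodness\ of $T^*$ itself. There are two ways to proceed.

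The simplest is to avoid \cref{thm:weaklyLgood_implies_Lgood_submonoids}. If $M$ is weakly-$L$-\good\ with respect to an arbitrary regular $Q$ with $\mu(Q)=M$, we would like to know that $Q$ is in fact $L$-\good, so that \cref{prop:goodrec} gives $L$-recognisability. I do not expect this upgrade to hold for arbitrary $Q$, since the reparametrisation constant can be unbounded. So the plan is to reduce to $Q=T^*$ instead.

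The reduction uses \cref{cor:GoodnessSubmonoidGenInd}, which is quoted in the remark after \cref{def:WeaklyGoodQ}. It says that (weak) \goodness\ of $T^*$ depends only on the submonoid $\mu(T^*)$, not on the finite generating set. I will interpret the hypothesis, as the statement intends in context, as weak \goodness\ with respect to a finitely generated presentation $T'^*$ of $M$. If what is given is an arbitrary regular $Q$, I would try to rerun the argument of \cref{thm:weaklyLgood_implies_Lgood_submonoids} directly for $Q$. In that argument, the $T$-prefixes of $w'$ are replaced by prefixes of $w'$ that end at accepting states of the minimal automaton of $Q$, as in \cref{prop:Lgood_is_stronger_rational_subsets}. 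Since $M$ is a monoid, the increments between consecutive such prefixes lie in $M$. These increments have bounded $S$-length, so they can be rewritten as $T$-words of bounded length, producing a word in $T^*$ that fellow travels $w$. This shows $T^*$ is weakly-$L$-\good\ with constant about $k+|V|$.

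Once we know $T^*$ is weakly-$L$-\good, \cref{thm:weaklyLgood_implies_Lgood_submonoids} makes $T^*$ $L$-\good\ with constants $k+\ell$ and $c\ell$. The nontrivial direction of \cref{prop:goodrec}, proved constructively in \cref{thm:algorithm_terminates}, then shows that $M$ is $L$-recognisable. The step I expect to need the most care is the transfer from an arbitrary regular $Q$ to $T^*$. The rest is composition of earlier results.
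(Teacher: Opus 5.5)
\textbf{Easy direction and main line.} Your easy direction is correct and is the paper's argument. For the converse, your main line (reduce to $T^*$ via \cref{cor:GoodnessSubmonoidGenInd}, then apply \cref{thm:weaklyLgood_implies_Lgood_submonoids} and \cref{thm:algorithm_terminates}) uses the same chain of results that the paper's one-line derivation rests on. That chain only applies when the regular set in the hypothesis has the form $(T')^*$. The statement allows an arbitrary regular $Q$ with $\mu(Q)=M$, so reinterpreting the hypothesis does not prove it.

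\textbf{Where the fallback fails.} Your fallback for general $Q$ breaks down in two places.
\begin{itemize}
\item You claim that, since $M$ is a monoid, the increments between consecutive accepting prefixes of $w'$ lie in $M$. This is false: $p,pv\in Q$ does not force $\mu(v)\in M$. In $\mathbb{Z}=\langle a\rangle$ take $Q=a^*\cup a^*aa^{-1}$. Then $\mu(Q)=\mathbb{N}=\mu(\{a\}^*)$, but the accepting prefixes $a$ and $aa^{-1}$ of $aa^{-1}\in Q$ differ by $a^{-1}$, and $\mu(a^{-1})=-1\notin\mathbb{N}$.
\item The next accepting prefix of $w'$ after position $f(i)$ can be arbitrarily far from $w'_{f(i)}$. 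The bounded completions $w'(f(i))u(i)$ of \cref{prop:Lgood_is_stronger_rational_subsets} are not prefixes of $w'$, so they give no chain $m_{i+1}\in m_iM$.
\end{itemize}
As a sanity check, apply your argument to $Q=L\cap\mu^{-1}(M)$. It would show that $T^*$ is weakly proximate whenever $\mu(T^*)$ is $L$-recognisable, which is exactly the question the paper leaves open right after this proposition.

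\textbf{The missing idea.} The upgrade you set aside does hold for every regular $Q$. The bounded increments should be realised inside an automaton for $Q$, not inside $M$.

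Fix a finite automaton for $Q$, and let $R_{p,q}$ be the set of labels of paths from state $p$ to state $q$. Define $c=\max\min\{|\sigma| : \sigma\in R_{p,q},\ \mu(\sigma)=g\}$, the maximum taken over states $p,q$ and over $g\in\mu(R_{p,q})$ with $|g|_S\le 2k+1$. This is finite because the ball of radius $2k+1$ in $\Gamma(G,S)$ is finite.

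Now take $w$, $w'$, $f$ as in \cref{def:WeaklyGoodQ}. Fix an accepting run of $w'$ and let $q_i$ be its state after $f(i)$ letters. The segment of $w'$ between positions $f(i-1)$ and $f(i)$ lies in $R_{q_{i-1},q_i}$. It represents $(w'_{f(i-1)})^{-1}w'_{f(i)}$, which has length at most $2k+1$. Replace it by a word of length at most $c$ in $R_{q_{i-1},q_i}$ with the same image.

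The resulting word follows the run $q_0,\dots,q_{|w|}$, so it lies in $Q$. It passes through the same points $w'_{f(i)}$, in order, at breakpoints at most $c$ letters apart. Hence $Q$ itself is $L$-proximate with constants $k$ and $c$, and \cref{thm:algorithm_terminates} gives $L$-recognisability.

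This uses neither $T^*$ nor the monoid structure. \cref{thm:weaklyLgood_implies_Lgood_submonoids} is the special case that uses only the base vertex of the flower automaton.
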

\begin{qu}
  Is it possible to replace ``with respect to some regular subset of $(S^\pm)^*$'' by ``with respect to $T^*$'' in the Lemma above? It
  follows from \cref{prop:goodrec} that 
  this works for the only if part of the statement but we do not know whether the property that $\mu(T^*)$ is $L$-recognisable
 and weakly-$L$-\good, with respect to some regular set $Q$, is sufficient to ensure that it is weakly-$L$-\good\ with respect to $T^*$.
\end{qu}
Before we move on to the next section, we give an equivalent definition for weakly-$L$-\good\ submonoids, which clarifies the similarity to classical $L$-quasi-convexity and the independence on generating set.
\begin{prop}\label{prop:AltDefGoodSubmonoid}
	Given a finitely generated group $G=\langle S\rangle$ with rational structure $(G, L)$ and a finite set $T\subset (S^\pm)^*$, the language $T^*$ (and therefore the submonoid $M = \mu(T^*)$ of $G$) is weakly-$L$-\good\ 
if and only if there is a constant $k$ such that for every 
$w\in L\cap \mu^{-1}(M)$ there are elements $m_1, \ldots ,m_{|w|}$ in $M$ such that,  in $\Gamma(G,S)$,  $m_i$ is at distance at most $k$ from the vertex $w_i$, $m_{|w|}= w$ and $m_{i+1}\in m_iM$.
\end{prop}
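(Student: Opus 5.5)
We prove the two implications separately, translating between a word $w'\in T^*$ together with its reparametrisation $f$, and a chain of submonoid elements $m_i$. The statement is to be read with $m_{|w|}=\mu(w)$ and, implicitly, $m_0=1_G$ (the chain starts from $w_0=1_G$). Throughout, let $\ell=\max\{|t|_S : t\in T\}$.

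For ($\Rightarrow$), let $T^*$ be weakly-$L$-\good\ with constant $k$, and let $w\in L\cap\mu^{-1}(M)$. Take $w'=t_1\cdots t_u\in T^*$ and the non-decreasing map $f$ from \cref{def:WeaklyGoodQ}. As in the proof of \cref{thm:weaklyLgood_implies_Lgood_submonoids}, let $g(i)$ be the least index $\ge f(i)$ at which $w'_{g(i)}$ is the endpoint of a $T$-prefix of $w'$. Set $m_i=w'_{g(i)}$. Each $m_i$ is the image of a $T$-prefix, so $m_i\in M$. Since $d(w_i,w'_{f(i)})\le k$ and $w'_{f(i)}$ lies inside a single factor $t_j$, we get $d(w_i,m_i)\le k+\ell$. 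Because $g$ is non-decreasing, the $T$-prefix ending at $g(i+1)$ extends the one ending at $g(i)$ by a word of $T^*$; hence $m_{i+1}\in m_iM$. Finally $f(|w|)=|w'|$ gives $g(|w|)=|w'|$, so $m_{|w|}=\mu(w')=\mu(w)$. Similarly $f(0)=0$ gives $m_0=1_G$. So the condition holds with constant $k+\ell$.

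For ($\Leftarrow$), suppose the constant $k$ and elements $m_i$ exist. Write $m_{i+1}=m_i x_{i+1}$ with $x_{i+1}\in M$. The triangle inequality gives $|x_{i+1}|_S\le 2k+1$. Choose $v_{i+1}\in T^*$ with $\mu(v_{i+1})=x_{i+1}$ and set $w'=v_1\cdots v_{|w|}\in T^*$. Define $f(i)=|v_1\cdots v_i|$. This $f$ is non-decreasing with $f(0)=0$ and $f(|w|)=|w'|$, and $w'_{f(i)}=m_i$. Therefore $d(w_i,w'_{f(i)})\le k$ and $\mu(w')=\mu(w)$, so $T^*$ is weakly-$L$-\good\ with constant $k$.

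The main subtlety is bookkeeping rather than depth. Weak \goodness\ imposes no bound on the lengths $|v_i|$, so the $v_i$ may be arbitrary representatives. One must, however, make sure the endpoints match: $m_0=1_G$ and $m_{|w|}=\mu(w)$ in the ($\Leftarrow$) direction, and $g(|w|)=|w'|$ in the ($\Rightarrow$) direction. The other point to get right is the constant shift from $k$ to $k+\ell$ in ($\Rightarrow$), which comes from snapping $w'_{f(i)}$ forward to the nearest $T$-prefix endpoint.
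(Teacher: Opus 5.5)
Your proof is correct and takes essentially the same route as the paper. For ($\Leftarrow$) you concatenate $T^*$-representatives of the increments $m_i^{-1}m_{i+1}$, exactly as the paper does with its words $n_j$. For ($\Rightarrow$) you snap $w'_{f(i)}$ forward to the next $T$-prefix endpoint, as in the proof of \cref{thm:weaklyLgood_implies_Lgood_submonoids}, which gives the constant $k+\ell$. Your explicit handling of the endpoints ($m_0=1_G$, $g(|w|)=|w'|$) supplies bookkeeping the paper leaves implicit, and you are right that the $2k+1$ bound on the increments is not needed for weak proximacy.
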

\mcomment{AD: (sketch of) proof added}
\begin{proof} Recall that $w=w_{|w|}$. If such elements  $m_1, \ldots ,m_{|w|}$ exist, let $n_1$ be a $T^*$ representative of $m_1$, and
  let $n_{j+1}$ be a word in $T^*$ such that $m_{j+1}=_M m_jn_{j+1}$, for $1\le j\le |w|-1$; which exist as $m_{j+1}\in m_jM$.  Then 
  the word $n_1n_2\cdots n_{i}$ is in $T^*$ and is mapped by $\mu$ to $m_i$, for $1\le i\le |w|$, whence $w=_M n_1n_2\cdots n_{|w|}$ satisfies the
  weak-$L$-\good\ condition, with constant $k$.

  Conversely, if $G$ is weakly-$L$-\good,  with constant $k$, let $l := \max\{|t|_{S}\, :\, t\in T\}$ and let $k'=k+l$.
  If  $w'$ is the word corresponding to $w$, from \cref{def:WeaklyGoodQ}, then using
  the argument given in the sketch of the proof of \cref{thm:weaklyLgood_implies_Lgood_submonoids}, it follows that  elements   $m_1, \ldots ,m_{|w|}$ of $M$ exist, such that
  the distance between $w_i$ and $m_i$ is at most $k'$. 
\end{proof}
The following is an immediate consequence of \cref{prop:AltDefGoodSubmonoid}.
\begin{cor}\label{cor:GoodnessSubmonoidGenInd}
	In a group with rational structure $(G, L)$, 
 if the images in $G$ of finite subsets $T$ and $U$ of $(S^\pm)^*$
generate the same submonoid of $G$, then $T^*$ is (weakly) $L$-{\good} precisely when
$U^*$ is.
\end{cor}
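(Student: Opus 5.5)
The plan is to treat the weak and the strong cases separately, and to reduce the strong case to the weak one. The weak case follows directly from \cref{prop:AltDefGoodSubmonoid}. That proposition says $T^*$ is weakly-$L$-\good\ exactly when there is a constant $k$ such that every $w\in L\cap\mu^{-1}(M)$ admits elements $m_1,\dots,m_{|w|}\in M$ with $d(w_i,m_i)\le k$, $m_{|w|}=\mu(w)$ and $m_{i+1}\in m_iM$. This condition involves only the submonoid $M=\mu(T^*)=\mu(U^*)$, the language $L$ and the metric on $\Gamma(G,S)$. It makes no reference to $T$ or $U$. Applying \cref{prop:AltDefGoodSubmonoid} once to $T$ and once to $U$, the two weak-$L$-\goodness\ statements are both equivalent to the same intrinsic condition on $M$, so they are equivalent to each other. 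The constant $k$ may change between the two (it picks up $\ell_T$ or $\ell_U$ in the converse direction of that proposition), but only the existence of some constant matters.

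For the strong case, suppose $T^*$ is $L$-\good. Then it is in particular weakly-$L$-\good, so by the weak case $U^*$ is weakly-$L$-\good. Now \cref{thm:weaklyLgood_implies_Lgood_submonoids}, applied with the finite set $U$, upgrades this to $U^*$ being $L$-\good\ with explicit constants $k'=k+\ell_U$ and $c'=c\,\ell_U$. The argument is symmetric in $T$ and $U$, which completes the proof.

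The only point needing care is the constant $c$ in \cref{thm:weaklyLgood_implies_Lgood_submonoids}, defined as a maximum of $|m|_U$ over the elements $m\in M$ with $|m|_S\le 2(k+\ell)+1$. I need this maximum to be finite. It is: $G$ is finitely generated, so only finitely many elements of $G$ have bounded $S$-length, and each such element lying in $M$ has finite $U$-length because $U$ generates $M$. Everything else is a direct citation, so I expect no genuine obstacle.
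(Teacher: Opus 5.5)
Your proposal is correct and takes essentially the paper's route. The weak case is exactly the observation that the criterion in \cref{prop:AltDefGoodSubmonoid} refers only to $M$, $L$ and the Cayley graph metric. The strong case uses the equivalence of weak and strong proximacy for languages of the form $T^*$ from \cref{thm:weaklyLgood_implies_Lgood_submonoids}, which the paper leaves implicit when it calls the corollary an immediate consequence; your check that the constant $c$ there is finite is a sensible detail to record.
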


\section{A folding procedure for rational subsets}\label{sec:Construction} 

Throughout \mcomment{AD: changed the section heading} this section, we adopt the convention that a finitely presented group is given by a finite presentation $G=\langle S \:| \: R\rangle$
for which the set $R$ of relators is closed under taking inverses and cyclic permutation of words.\mcomment{AD: is it also necessary to assume that in a regular structure $(G,L)$, the language $L$ consists of freely reduced words?
  In any case, care needs to be taken to use $\mu$ and $\pi$ correctly. }

\subsection{Rewriting}
In the next paragraphs we define a finite rewriting system, parallel and similar to the one presented in \cite[$\S4$]{kmw17}, whose aim is to reflect the behaviour of the algorithm that will be introduced later in the section in \cref{construction:completion}. 

Let $G=\langle S \:| \: R\rangle$ be a finitely presented group and $\widetilde D_R$,  $\widetilde D_{\red}$ be the following rewriting systems on $(S^\pm)^*$, where $R_0=\{ss^{-1}\,|\, s\in S^\pm\}$:
\begin{flalign*}
  \widetilde D_R    &=  \big\{\emptyword\rightarrow t\, |\,  t \in R \cup R_0\},	\\
	\widetilde D_{\red} &=  \big\{r\rightarrow \emptyword\, |\,  r\in R_0\big\} .
\end{flalign*}
Given words $w, w'$ in $(S^\pm)^*$, we say that $w'$ is one $\widetilde D_R$-rewriting step away from $w$  if  there exists an integer $n\ge 1$ and 
words $u^{(i)}$ such that $w$ can be written as the concatenation $$w = u^{(1)}u^{(2)}\ldots u^{(n)},$$ (where some of the $u^{(i)}$ may be the empty word) 
and 
$$w' = u^{(1)}t^{(1)}u^{(2)}\ldots t^{(n-1)} u^{(n)}$$ so that, for each $i$, $\emptyword\rightarrow t^{(i)}$ is a rule in $\widetilde D_R$. We denote this by $w\xrightarrow[R]{}w'$. Note that, by allowing $n = 1$ in the definition of a $\widetilde D_R$-rewriting step, we have that $w\xrightarrow[R]{}w$, and by allowing $u^{(i)}=\emptyword$  we allow insertion of multiple relations in the
same position in $w$ and at its
beginning or end. If there are words $w_{(1)},\ldots , w_{(k-1)}$ such that $$w\xrightarrow[R]{}w_{(1)}\xrightarrow[R]{}\ldots  \xrightarrow[R]{}w_{(k-1)}\xrightarrow[R]{}w',$$
then we say that $w$ is $k$ $\widetilde D_R$-rewriting steps away from
$w'$ and we write $w\xrightarrow[R]{k}w'$. We write $w\xrightarrow[R]{*}w'$ to describe that $w'$ is obtained from $w$ in 0 or more $\widetilde D_R$-rewriting steps.
Using the rewriting system $\widetilde{D}_{\red}$, we similarly define $\widetilde D_{\red}$-rewriting steps together with the notation $$w\xrightarrow[\red]{}w'\;, w\xrightarrow[\red]{k}w' \;\text{ and }\; w\xrightarrow[\red]{*}w'.$$
\begin{lemma}\label{lemma:sameg_kstepsaway}
  If $w,u$ and $w'$ are elements of $(S^\pm)^*$ such that  
  $w\xrightarrow[\red]{} u\xrightarrow[R]{} w'$, then there exists a word $v\in (S^\pm)^*$  such that $w\xrightarrow[R]{} v\xrightarrow[\red]{} w'$.
\end{lemma}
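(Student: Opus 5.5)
The idea is to commute a simultaneous deletion of cancelling pairs past a simultaneous insertion of relators. The key observation is that every rule of $\widetilde D_{\red}$ deletes a word $ss^{-1}\in R_0$, and every such word is also the right-hand side of a rule $\emptyword\to ss^{-1}$ in $\widetilde D_R$. So we can postpone the deletions: first insert, in one $\widetilde D_R$-step, everything $w'$ needs while keeping the cancelling pairs of $w$ present, and then delete those pairs in one $\widetilde D_{\red}$-step.

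First I write out the two given steps. Since $w\xrightarrow[\red]{}u$, we have
$w=x^{(1)}r^{(1)}x^{(2)}\cdots r^{(m-1)}x^{(m)}$ with each $r^{(j)}\in R_0$ and $u=x^{(1)}x^{(2)}\cdots x^{(m)}$. Since $u\xrightarrow[R]{}w'$, we have $u=u^{(1)}\cdots u^{(n)}$ and $w'=u^{(1)}t^{(1)}u^{(2)}\cdots t^{(n-1)}u^{(n)}$ with each $t^{(i)}\in R\cup R_0$.

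The word $u$ now carries two families of cut points: the $m-1$ positions where an $r^{(j)}$ was removed, and the $n-1$ positions where a $t^{(i)}$ is inserted. I take the common refinement $u=y^{(1)}y^{(2)}\cdots y^{(p)}$, allowing empty pieces. At each cut point I record the inserted block, if any, and the deleted pair, if any. Where both kinds occur at the same position of $u$, I have to fix an order, and I choose to place the $r^{(j)}$'s to the left of the $t^{(i)}$'s. The same convention handles several deletions or several insertions at one position, since empty $u^{(i)}$ and $x^{(j)}$ are allowed.

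With this ordering fixed, I define $v$ to be the word obtained from $w$ by inserting each $t^{(i)}$ at its cut point, to the right of any $r^{(j)}$ sitting there. This is a single $\widetilde D_R$-step $w\xrightarrow[R]{}v$: one decomposes $w$ at the insertion points, splitting each $x^{(j)}$ further where necessary. Deleting every $r^{(j)}$ from $v$ is then a single $\widetilde D_{\red}$-step. The resulting word is $u$ with the $t^{(i)}$ inserted at the right places, which is exactly $w'$.

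The main obstacle is only the bookkeeping of coincident positions. I must check that an $r^{(j)}$ and a $t^{(i)}$ at the same point of $u$ remain separate, non-interleaved factors in $v$, so that both decompositions exist with possibly empty pieces. The left/right convention above guarantees this, because no inserted block ever lands inside a deleted pair. No property of $R$ is needed beyond the rewriting rules themselves.
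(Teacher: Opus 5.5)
Your argument is correct and is essentially the paper's: refine the two factorisations of $u$ to a common one, insert each $t^{(i)}$ into $w$ at the corresponding cut point while keeping the cancelling pairs (placed to the left of any relator inserted at the same point, exactly as in the paper's $b_2tt^{-1}\gamma c$), then delete those pairs in one $\widetilde D_{\red}$-step. The paper only works through a representative example and calls the general case tedious, so your write-up supplies the general bookkeeping it omits; your remark that words of $R_0$ are also right-hand sides of $\widetilde D_R$ is never used, but does no harm.
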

Rather than proving this in general we exhibit an example. The (rather tedious) general proof follows the  same lines. 
  For example, suppose $w=ass^{-1}btt^{-1}c$, where $s,t\in S$ and $a,b,c\in (S^\pm)^*$, with $u=abc$ and then that $u=defg$, where $d,e,f,g\in (S^\pm)^*$, and 
  $w'=d\alpha e\beta f\gamma g$, where $\alpha,\beta,\gamma\in R\cup \{ss^{-1}\,|\, s\in S\}$. Then we may refine the factors of
  $u$ to allow either factorisation. If, for instance, $a=a_1a_2$ and $b=b_1b_2$ where $d=a_1$, $e=a_2b_1$, $f=b_2$ and $g=c$ then
  \[u=(a_1a_2)(b_1b_2)c=a_1(a_2b_1)b_2c,\] \[w=(a_1a_2)ss^{-1}(b_1b_2)tt^{-1}c=a_1(a_2ss^{-1}b_1)(b_2tt^{-1})c\] and \[w'=a_1\alpha (a_2b_1)\beta b_2 \gamma c.\]
  Defining $v=a_1\alpha a_2ss^{-1}b_1 \beta b_2 tt^{-1}\gamma c$ we then 
have $w \xrightarrow[R]{}  v\xrightarrow[\red]{} w''$, as required. 

As a consequence of \cref{lemma:sameg_kstepsaway}: 
\begin{lemma}\label{lemma:sameg_DRfollowedbyDred}
	If $w$ and $w'$ are words representing the same element in $G$, then $w'$ can be obtained from $w$ by applying a sequence of $\widetilde D_R$ rewrites followed by $\widetilde D_{\red}$ rewrites.
\end{lemma}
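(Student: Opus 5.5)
The plan is to build a chain from $w$ to $w'$ by first inserting material and then cancelling it, and afterwards to commute all cancellations to the end using \cref{lemma:sameg_kstepsaway}.

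\textbf{Step 1: building a mixed chain.} Since $w =_G w'$, the word $w'^{-1}w$ represents the identity in $G$. So $w'^{-1}w$, and hence also $w w^{-1}$-type expressions, can be related to the empty word using relators. Concretely, first insert the word $w'w'^{-1}$ at the end of $w$ as a sequence of $\widetilde D_R$ steps. Each step inserts a trivial relator $ss^{-1}\in R_0$, applied from the middle outwards, giving
\[
w\xrightarrow[R]{*} w\,w'w'^{-1}.
\]
Next, because $w w'^{-1}$ is trivial in $G$ (note that $w'^{-1}$ is read as the formal inverse), its inverse $w'w^{-1}$ freely equals a product of conjugates $\prod_j x_j r_j x_j^{-1}$ with $r_j\in R$. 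Starting from $w$, I insert $w'$ followed by this product of conjugates. Each conjugate $x_jr_jx_j^{-1}$ is produced by inserting $x_jx_j^{-1}$ letter by letter (rules from $R_0$) and then inserting $r_j$ in the middle (a rule from $R$). The resulting word freely reduces to $w$, $w'$, $w w^{-1}$ in the right arrangement. Assembling this properly gives a chain
\[
w \xrightarrow[R]{*} z \xrightarrow[\red]{*} w',
\]
where $z = w\,w^{-1}w'\cdots$ is obtained by insertions and then freely reduces to $w'$. The cleanest way to phrase this is the standard fact that $w$ and $w'$ are connected by a finite sequence of elementary moves: insertion or deletion of $ss^{-1}$, and insertion or deletion of $r\in R$. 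The deletion of $r\in R$ can be replaced by insertions from $R$ plus free reductions, since deleting $r$ from $urv$ equals inserting $r^{-1}$ after it (this is in $R$, as $R$ is closed under inverses) and then freely cancelling $rr^{-1}$. In this way every sequence becomes a sequence of $\widetilde D_R$ steps interleaved with $\widetilde D_{\red}$ steps.

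\textbf{Step 2: normalising the order.} Given any interleaved sequence, repeatedly apply \cref{lemma:sameg_kstepsaway} to swap each adjacent pattern $\xrightarrow[\red]{}\xrightarrow[R]{}$ into $\xrightarrow[R]{}\xrightarrow[\red]{}$. I will argue termination by induction on the number of inversions, that is, pairs consisting of a $\red$ step that precedes an $R$ step. Each swap reduces the number of inversions by one, since single steps are swapped into single steps. Identity steps are allowed, so no counting issues arise. The result is all $R$ steps first and then all $\red$ steps, as required.

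\textbf{Main obstacle.} The main care is in Step 1: ensuring that deletions of relators are genuinely expressible via the allowed rules. This uses the closure of $R$ under inverses and cyclic permutations. For instance, deleting $r = ab$ from $u\,ab\,v$ is done by inserting $b^{-1}a^{-1}\in R$ between $a$ and $b$ (a cyclic permutation of $r^{-1}$), giving $u\,a\,b^{-1}a^{-1}\,b\,v$, which is not directly free-reducible. So the precise choice is this: insert $r^{-1}$ immediately before $r$, giving $u\,r^{-1}r\,v$, which then freely reduces to $uv$. I would then check that this is legitimate, and it is.
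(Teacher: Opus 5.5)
Your argument is correct and is essentially the paper's. The paper states this lemma as a direct consequence of \cref{lemma:sameg_kstepsaway}: connect $w$ to $w'$ by elementary moves, turning each relator deletion into an insertion of $r^{-1}$ next to $r$ followed by free cancellation (legitimate since $R$ is closed under inverses), and then push every $\widetilde D_{\red}$ step past the $\widetilde D_R$ steps, exactly as in your final version of Step 1 together with Step 2. You should, however, drop the exploratory first half of Step 1, since ``inserting $w'$'' is not a legal $\widetilde D_R$ move in general; only the elementary-move formulation actually carries the proof.
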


\begin{defn}
	Given a finitely presented group $G=\langle S|R\rangle$, and  words $w, w'$ in $(S^\pm)^*$ we say that $w'$ is one step away from $w$ if there exists an intermediate word $w''$ such that  $$w\xrightarrow[R]{} w''\xrightarrow[\red]{*} w'.$$ We say that $w'$ is $k$ steps away from $w$ if there are words $w_{(0)}= w, w_{(1)},\ldots , w_{(k)}=w'$ such that $w_{(i)}$ is one step away from $w_{(i-1)}$ for $i = 1,\ldots,k$.
\end{defn}

\begin{lemma}\label{lemma:k_steps_criterion}
  Let $w$ and  $w'$ be words in $(S^\pm)^*$. Then $w'$ is $k$ steps away from $w$ if and only if there exists an intermediate word $w''$ such that  $$w\xrightarrow[R]{k} w''\xrightarrow[\red]{*} w'.$$
\end{lemma}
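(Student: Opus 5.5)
Both directions go by induction on $k$, with \cref{lemma:sameg_kstepsaway} as the only real tool. That lemma lets a $\widetilde D_{\red}$-step be pushed past a $\widetilde D_R$-step, at the cost of turning $\red\cdot R$ into $R\cdot\red$. Two elementary observations are used throughout. First, $\xrightarrow[R]{}$ and $\xrightarrow[\red]{}$ are reflexive: take $n=1$, or delete zero factors. Second, $\xrightarrow[\red]{*}$ is transitive by definition, and $\xrightarrow[R]{a}$ followed by $\xrightarrow[R]{b}$ gives $\xrightarrow[R]{a+b}$.

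For the forward direction, suppose $w=w_{(0)},\dots,w_{(k)}=w'$ with $w_{(i-1)}\xrightarrow[R]{}x_i\xrightarrow[\red]{*}w_{(i)}$. By induction on $k$ I will show $w\xrightarrow[R]{k}w''\xrightarrow[\red]{*}w'$. The case $k=1$ is the definition. For the inductive step, assume $w\xrightarrow[R]{k-1}y\xrightarrow[\red]{*}w_{(k-1)}$. Then
\[
y\xrightarrow[\red]{*}w_{(k-1)}\xrightarrow[R]{}x_k\xrightarrow[\red]{*}w'.
\]
I need to move the single $R$-step leftwards past the chain of $\red$-steps $y\xrightarrow[\red]{}\cdots\xrightarrow[\red]{}w_{(k-1)}$. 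Applying \cref{lemma:sameg_kstepsaway} to the last $\red$-step followed by the $R$-step replaces ``$\red$ then $R$'' by ``$R$ then $\red$''. Iterating this along the chain, by an inner induction on the number $m$ of $\red$-steps, yields $y\xrightarrow[R]{}z\xrightarrow[\red]{m}x_k$, hence $z\xrightarrow[\red]{*}w'$. Prepending $w\xrightarrow[R]{k-1}y$ gives $w\xrightarrow[R]{k}z\xrightarrow[\red]{*}w'$, as required.

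For the converse, suppose $w=v_0\xrightarrow[R]{}v_1\xrightarrow[R]{}\cdots\xrightarrow[R]{}v_k=w''\xrightarrow[\red]{*}w'$. Set $w_{(i)}=v_i$ for $i<k$ and $w_{(k)}=w'$. Each step $v_{i-1}\xrightarrow[R]{}v_i\xrightarrow[\red]{0}v_i$ is one step, by reflexivity of $\xrightarrow[\red]{*}$. The last step $v_{k-1}\xrightarrow[R]{}v_k\xrightarrow[\red]{*}w'$ is one step by definition. So $w'$ is $k$ steps away from $w$. The degenerate case $k=0$, if allowed, is just $w=w'$ on both sides, using reflexivity.

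The main obstacle is the commutation step in the forward direction: making sure \cref{lemma:sameg_kstepsaway} can be iterated, so that exactly one $R$-step survives each swap. The lemma as stated converts $\red$ followed by $R$ into one $R$-step followed by one $\red$-step, so the inner induction on $m$ closes cleanly. I would therefore only need to check that the intermediate word produced at each swap feeds into the next application, which it does because the lemma is stated for arbitrary words.
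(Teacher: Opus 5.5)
Your proof is correct and follows the paper's argument, with the details written out. The paper also gets the forward direction by repeatedly applying \cref{lemma:sameg_kstepsaway} to commute $\widetilde D_{\red}$-steps past $\widetilde D_R$-steps, and reads the converse directly off the definitions. One small correction to your aside: for $k=0$ the right-hand side says only $w\xrightarrow[\red]{*}w'$ (e.g.\ $w=ss^{-1}$, $w'=\emptyword$), which does not force $w=w'$, so the equivalence needs $k\ge 1$.
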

\begin{proof}
  If $w'$ is $k$ steps away from $w$ then, by repeatedly applying the result of \cref{lemma:sameg_kstepsaway} a rewriting of the required
  form may be obtained. The converse follows directly from the definition.  
\end{proof}
\mcomment{SR: revised \cref{subsec:folding}, in particular deleting detail repeated from earlier on}

\begin{cor} \label{corollary:steps}
If $w$ and $w'$ are words representing the same element in $G$, then there exists $k$ such that $w'$ is $k$ steps away from $w$.
\end{cor}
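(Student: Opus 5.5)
The plan is to derive the corollary directly from \cref{lemma:sameg_DRfollowedbyDred} together with \cref{lemma:k_steps_criterion}.

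Suppose $w=_G w'$. By \cref{lemma:sameg_DRfollowedbyDred}, $w'$ is obtained from $w$ by a finite sequence of $\widetilde D_R$ rewrites followed by a finite sequence of $\widetilde D_{\red}$ rewrites. That is, there are an intermediate word $w''$ and some $k\ge 0$ with
\[
w\xrightarrow[R]{k} w''\xrightarrow[\red]{*} w'.
\]
Here each individual rule application $\emptyword\to t$ is in particular a single $\widetilde D_R$-rewriting step, taking $n=2$ in the definition. So a sequence of $k$ such applications gives $w\xrightarrow[R]{k}w''$.

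The case $k=0$ needs a small adjustment, since ``$k$ steps away'' with $k=0$ forces $w'=w$. If no $\widetilde D_R$ rewrites are needed, I would use the trivial step $w\xrightarrow[R]{}w$, which is permitted by the case $n=1$ of the definition. This gives $w\xrightarrow[R]{1}w\xrightarrow[\red]{*}w'$, so we may take $k\ge 1$ throughout.

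Applying the ``if'' direction of \cref{lemma:k_steps_criterion} then shows that $w'$ is $k$ steps away from $w$. More directly, the definition already gives this: group the first $k-1$ rewrites as steps each followed by zero reductions, and let the last one be followed by $w''\xrightarrow[\red]{*}w'$.

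The main point of substance is hidden in \cref{lemma:sameg_DRfollowedbyDred}. Its justification goes as follows. Since $w=_Gw'$, the word $w$ can be transformed into $w'$ by insertions and deletions of relators from $R$ and of trivial words $ss^{-1}$. A deletion of a relator $r$ can be simulated by inserting $r^{-1}$ adjacent to it and then freely reducing, using that $R$ is closed under inverses and cyclic permutations. The interleaved sequence of $\widetilde D_{\red}$ and $\widetilde D_R$ moves is then reordered using \cref{lemma:sameg_kstepsaway}, pushing all free reductions to the end.

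Since that lemma is available, the corollary itself is essentially bookkeeping. The only care needed is in matching the definitions, namely handling $k=0$ and viewing single insertions as $\widetilde D_R$-steps.
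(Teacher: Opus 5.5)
Your proposal is correct and is essentially the paper's proof, which simply combines \cref{lemma:sameg_DRfollowedbyDred} with \cref{lemma:k_steps_criterion}. Your handling of the case where no $\widetilde D_R$ insertions are needed, via the trivial step with $n=1$, is a sensible detail that the paper leaves implicit.
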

\mcomment{AD: The converse of this corollary obviously also holds. Is it worth making into an iff statement or is it best left as it is?}
\begin{proof}
This follows immediately from \cref{lemma:sameg_DRfollowedbyDred} and \cref{lemma:k_steps_criterion}. 
\end{proof}

\subsection{The procedure}\label{subsec:folding}
Now, we describe the procedure that will allow us to recognise all of the 
$L$-representatives of a rational subset $K$ of a group $G=\langle S \rangle$, starting from an automaton that recognises a rational subset $Q$ of $(S^\pm)^*$ with $\mu(Q)=K$.

\mcomment{SR: inserted formal definition of folding}
A key step in our procedure \emph{folds} a finite state automaton
 $\A=(S^{\pm}, V, E, A, v_0)$  to produce a finite state automaton $\A'$
whose language is the \emph{folding} of $L(A)$.

\begin{definition}\label{def:folding}
The \emph{folding} of a subset $L$ of $(S^\pm)^*$
is the smallest subset $L'$ of $(S^\pm)^*$ containing $L$ for which,
if $uxx^{-1}v \in L'$ then $uv \in L'$.
\end{definition}

By Benois' theorem \cite{benois,bs21}, the folding $L'$ of a regular language
$L=L(\A)$  is regular, 
and an automaton $\A'$ to recognise it can be constructed using an algorithm 
described in \cite{bs21}. \mcomment{AD: added description of Benois' algorithm}
As it forms a central part of our procedure we briefly outline the
operation of this algorithm. Given a finite state automaton
$M=(S^{\pm}\cup \{\varepsilon\}, V, E, A, v_0)$,
which may have $\varepsilon$ transitions,
the algorithm constructs a new automaton $M'$, as follows. Regard $M$
as a directed, labelled graph. If $p$ and $q$ are vertices
of $M$ and, for some $a\in S^{\pm}$, there is a path in $M$ from $p$ to $q$ with label $aa^{-1}$, and no path from $p$ to $q$ with label equal to  $1$ in the
free monoid $(S^{\pm})^*$,
then an edge from $p$ to $q$, labelled $\varepsilon$, is added to $M$. This
is repeated over all pairs $(p,q)\in V\times V$ to give an automaton $M_1$.
The entire process is then repeated, starting with $M_1$ in place of $M$, to
construct $M_2$. As no
new vertices are added and the
only new edges added have label $\varepsilon$, eventually the process stabilises
with $M_n=M_{n+1}=M'$; at which point $L(M')=L(M)'$ and we call $M'$ the
\emph{folding} of $M$.
For details see \cite{bs21}. We call this algorithm \emph{Benois' algorithm}. 
%

\begin{construction}\label{construction:completion}
Let $G=\langle S \:|\:  R\rangle$ be a finitely presented group, with $R$ closed under taking inverses, 
and let $\A_0$ be an automaton over the alphabet $ S^\pm$ recognising a
 regular language $Q \subseteq  (S^\pm) ^*$.
For each $n>0$, we construct $\A_n$ from $\A_{n-1}$ by performing the following sequence of two steps:

\begin{enumerate}[label={(\arabic*)}]
  \item \label{enum:construction1}
 At each vertex of $\A_{n-1}$ add a cycle corresponding to each relator of
 $R$ as well as a cycle of length two corresponding to each $ss^{-1}$  for  
$s\in S^\pm$, and hence obtain an automaton $\A_n^0$.
  \item Use Benois' algorithm to construct an automaton $\A_n^1$ by  folding  $\A_n^0$.
  \item Determinise $\A_n^1$ to obtain $A_n$.     \mcomment{AD: added step (3)}           
	\end{enumerate}
\end{construction}

\begin{prop}\label{prop:LanguageOfAn}
	A word $w\in (S^\pm)^*$ is recognised by the automaton $\A_n$ if and only if $w$ is at most $n$ steps away from a word accepted by $\A_0$ .
	
\end{prop}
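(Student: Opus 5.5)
We argue by induction on $n$. The case $n=0$ holds because a word is $0$ steps away from a word accepted by $\A_0$ exactly when it is itself accepted by $\A_0$; this uses the convention that $w\xrightarrow[R]{}w$ and $w\xrightarrow[\red]{*}w$. Since determinisation does not change the language, it suffices to describe $L(\A_n^1)$. The inductive claim is that
\[
L(\A_n^1)=\{w' : w'' \xrightarrow[\red]{*} w' \text{ for some } w'' \text{ with } w\xrightarrow[R]{} w''\text{ and } w\in L(\A_{n-1})\}.
\]
In words, $L(\A_n)$ consists of the words that are one step away from a word of $L(\A_{n-1})$. Once the claim is proved, the induction hypothesis gives the result, because a word at most $n$ steps from $L(\A_0)$ is one step away from a word at most $n-1$ steps away. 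Note that "at most $k$ steps" and "exactly $k$ steps" agree here, since trivial steps are allowed.

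The proof of the claim splits into two parts. The first part is to identify $L(\A_n^0)$ with the set of words $w''$ such that $w\xrightarrow[R]{}w''$ for some $w\in L(\A_{n-1})$.

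For the inclusion of the right side in $L(\A_n^0)$, take an accepting path for $w=u^{(1)}\cdots u^{(m)}$. At the vertex reached after reading $u^{(1)}\cdots u^{(i)}$, traverse the attached cycle labelled $t^{(i)}$. Several consecutive insertions at the same position are handled by traversing several cycles at the same vertex.

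For the reverse inclusion, take an accepting path in $\A_n^0$. Each maximal excursion into an added cycle must begin and end at the same original vertex, since each cycle is attached at a single vertex. Deleting these excursions gives an accepting path in $\A_{n-1}$, and the deleted labels are relators or words $ss^{-1}$ inserted at the corresponding positions. Because $R$ is closed under cyclic permutation, I will attach each relator cycle in the form that reads the relator starting from the base vertex. I also need the added cycles to be edge-disjoint from the original automaton, so that entering and leaving an excursion is well defined.

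The second part is to use the property of Benois' algorithm quoted from \cite{bs21}, namely that $L(\A_n^1)$ is the folding of $L(\A_n^0)$ in the sense of \cref{def:folding}. I then need to check that this folding coincides with the set of words $w'$ such that $w''\xrightarrow[\red]{*}w'$ for some $w''\in L(\A_n^0)$.

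That set contains $L(\A_n^0)$ and is closed under deleting a factor $xx^{-1}$, so it contains the folding. Conversely, every word obtained from $L(\A_n^0)$ by a sequence of deletions of factors $xx^{-1}$ lies in the smallest closed set by induction on the number of deletions.

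The main obstacle is bookkeeping: the rewriting step $\xrightarrow[\red]{}$ performs several disjoint deletions simultaneously. I would show that such a step can be decomposed into single deletions performed one after another, and that conversely single deletions are themselves rewriting steps. This makes $\xrightarrow[\red]{*}$ the reflexive–transitive closure of single $xx^{-1}$ deletions.
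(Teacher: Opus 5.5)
Your proposal is correct and follows essentially the same route as the paper's proof: you identify $L(\A_n^0)$ with the words one $\widetilde D_R$-step from $L(\A_{n-1})$ by reading the inserted cycles as factors in $(R\cup R_0)^*$, identify $L(\A_n)$ with the $\xrightarrow[\red]{*}$-closure of $L(\A_n^0)$ via Benois' algorithm, and induct on $n$. The only difference is that you supply bookkeeping the paper leaves implicit, namely the excursions into the attached cycles and the decomposition of a simultaneous $\widetilde D_{\red}$-step into single deletions.
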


\begin{proof}
  By definition, $\A_{n}$ is the automaton obtained by folding and determinising $\A^0_{n}$, which means  that the language accepted by $\A_{n}$ is the set of all words that can be obtained through free reductions from words accepted by $\A^0_{n}$. Consequently, a word $w'$ is accepted by $\A_{n}$ if and only if there is a word $w''$ accepted by $\A^0_{n}$ such that $w''\xrightarrow[\red]{*}w'$.
	
	Take now an arbitrary word $w''$ accepted by $\A^0_{n}$. It is clear from the construction that we can write it as $$w'' = r^{(0)}w_1r^{(1)}w_2\dots w_m r^{(m)},$$ where each word $r^{(i)}$ is in the language $ (R\cup  R_0)^*$ (where $R_0=\{ss^{-1} \mid s\in S^\pm \}$, as above) and $w=w_1w_2\dots w_m$ is a word of length $m$ accepted by $\A_{n-1}$. This means exactly that $w''$ is at most one $R$-rewriting step away from $w$, so altogether we deduce that $$w\xrightarrow[R]{} w''\xrightarrow[\red]{*} w'.$$
	
	This means that $w'$ in $L(\A_{n})$ is one step away from a word $w$ in $L(\A_{n-1})$ and, by induction, we have that the words accepted by $\A_{n}$ are at most $n$ rewriting steps away from those accepted by $\A_0$. 
	
	The converse also follows by induction from the definition of the rewriting systems, since $\widetilde D_R$ emulates step (1) and $\widetilde D_{\red}$ emulates step (2) in \cref{construction:completion}:
	if a word $w'$ is at most one step away from a word $w$ accepted by $\A_{n-1}$ it means that we can find a word $w''$ (possibly equal to $w$) such that $$w\xrightarrow[R]{} w''\xrightarrow[\red]{*} w'.$$ 
Applying \cref{construction:completion}, we see that the word $w''$ is accepted by $\A_{n}^0$ and so the word $w'$ is accepted by $\A_{n}$. 
\end{proof}

\begin{cor}
	Let $\langle S|R\rangle$ be a finitely presented group and $Q_0\subseteq  (S^\pm)^*$ the regular language accepted by some automaton $\A_0$. Denote by $Q_n$, $n\in \mathbb{N}$, the language accepted by the automaton $\A_n$ whose construction is described above.
	Then, $\bigcup_{n\in \mathbb{N}}Q_n = \mupi^{-1}(\mupi(Q_0)).$
\end{cor}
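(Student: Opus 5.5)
The plan is to prove the two inclusions separately, using \cref{prop:LanguageOfAn} for the description of each $Q_n$ and \cref{corollary:steps} to reach every word of the full preimage.

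For the inclusion $\bigcup_{n} Q_n \subseteq \mupi^{-1}(\mupi(Q_0))$, I will show that a single step preserves the image in $G$. Suppose $w\xrightarrow[R]{} w''\xrightarrow[\red]{*} w'$. The $\widetilde D_R$-step inserts words of $R\cup R_0$, and each of these represents $1_G$, so $\mupi(w'')=\mupi(w)$. Each $\widetilde D_{\red}$-step deletes a subword $ss^{-1}$, which also represents $1_G$, so $\mupi(w')=\mupi(w'')$. Induction on the number of steps then shows that any word at most $n$ steps away from a word $w_0\in Q_0$ has the same image as $w_0$. By \cref{prop:LanguageOfAn}, every $w\in Q_n$ is such a word, so $\mupi(w)\in\mupi(Q_0)$.

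For the reverse inclusion, let $w\in\mupi^{-1}(\mupi(Q_0))$. Choose $w_0\in Q_0$ with $\mupi(w_0)=\mupi(w)$. By \cref{corollary:steps} there is some $k$ such that $w$ is $k$ steps away from $w_0$. By \cref{prop:LanguageOfAn}, $w$ is then accepted by $\A_k$, so $w\in Q_k$.

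Two points need care. First, \cref{prop:LanguageOfAn} says ``at most $n$ steps''. This is compatible with exactly $k$ steps, and in any case the $Q_n$ are nested: the trivial rewriting $w\xrightarrow[R]{}w$ is allowed, so $L(\A_{n-1})\subseteq L(\A_n)$. Second, \cref{corollary:steps} is the substantive input, and it rests on \cref{lemma:sameg_DRfollowedbyDred}. I will take that lemma as given, in the paper's form: equal elements of $G$ are connected by $\widetilde D_R$-insertions followed by free reductions, because $w'=_G w$ means $w'$ is obtained from $w$ by inserting and deleting relators and trivial pairs, and deletions can be pushed to the end via \cref{lemma:sameg_kstepsaway}. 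With these results assumed, the corollary itself involves no real obstacle.
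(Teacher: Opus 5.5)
Your proof is correct and follows the same route as the paper. Both arguments use \cref{prop:LanguageOfAn}: steps preserve the image in $G$, which gives $\bigcup_n Q_n\subseteq \mupi^{-1}(\mupi(Q_0))$, and \cref{corollary:steps} supplies the number of steps for the reverse inclusion. You also spell out details the paper's two-line proof leaves implicit, namely the direction of the step relation and the nesting of the $Q_n$.
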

\begin{proof}
	From \cref{prop:LanguageOfAn}, we see that $\mupi(Q_n) = \mupi(Q_0)$ for all $n\in \mathbb{N}$, and it is clear that for any $w\in \mupi^{-1}(\mupi(Q_0))$, there exists $n\in \mathbb{N}$ such that $w$ can be obtained in $n$ steps from a word of $Q_0$. 
\end{proof}

As stated above, for an $L$-\good\ rational subset $K$, our procedure will eventually construct an automaton which recognises all $L$-representatives of $K$. 
\begin{thm}\label{thm:algorithm_terminates}
 Let $(G, L)$ be a rational structure for a finitely presented group $G = \langle S|R\rangle $ and let $Q_0\subseteq (S^\pm)^*$ be the regular language accepted by an automaton $\A_0$. Assume that $Q_0$ is  $L$-\good\ with constants $c$, $k$ (see \cref{def:GoodQ}) and let $K = \mupi(Q_0)$. Then, there exists $n\in \mathbb{N}$ such that the full set of $L$-representatives of $K$ is contained in  $Q_n$, the language accepted by the automaton $\A_n$ whose construction is described above. In particular, $K$ is $L$-recognisable. \mcomment{AD: added the final statement to cover the comment made in the paragraph following \cref{prop:goodrec}}
\end{thm}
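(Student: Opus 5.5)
Given an $L$-representative $w$ of $K$, I will show that $w$ is at most $n$ steps away from some word of $Q_0$, for a bound $n$ that depends only on the presentation, on $k$ and $c$, and on $\A_0$, and not on $w$. By \cref{prop:LanguageOfAn} this gives $w\in Q_n$. The final claim then follows. The $L$-representatives of $K$ are $L\cap\mu^{-1}(K)$, and since $Q_n\subseteq\mu^{-1}(K)$ this set equals $L\cap Q_n$, which is an intersection of two regular languages and hence regular.

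\emph{Set-up.} Fix $w\in L\cap\mu^{-1}(K)$. By \cref{def:GoodQ} there is $w'\in Q_0$ with $\mu(w')=\mu(w)$ and a reparametrisation $f$ with increments at most $c$ such that $d(w_i,w'_{f(i)})\le k$ for all $i$. For each $i$, choose a geodesic word $p_i$ of length at most $k$ from $w_i$ to $w'_{f(i)}$, taking $p_0$ and $p_{|w|}$ empty. Let $s_i$ be the $i$-th letter of $w$, and let $\sigma_{i}$ be the subword of $w'$ between positions $f(i-1)$ and $f(i)$, which has length at most $c$. Then each word
\[
s_i\,p_i\,\sigma_i^{-1}\,p_{i-1}^{-1}
\]
is a loop in $\Gamma(G,S)$ of length at most $2k+c+1$. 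So it is a relation of $G$ of bounded length.

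\emph{Key step.} Let $N$ be the maximum, over the finitely many freely reduced relation words of length at most $2k+c+1$, of the number of relator insertions needed to obtain that word from $\emptyword$ using $\widetilde D_R$ followed by free reduction. This $N$ is finite and depends only on the presentation, $k$ and $c$. Now start from $w'=\sigma_1\cdots\sigma_{|w|}$. In a single $\widetilde D_R$-rewriting step we may insert relators at every position simultaneously. So we insert, in parallel at each cut point between $\sigma_{i-1}$ and $\sigma_i$, the pieces needed to transform each $\sigma_i$ into $p_{i-1}s_ip_i^{-1}$. After at most $N$ such parallel steps, all followed by free reduction, we reach
\[
p_0s_1p_1^{-1}\,p_1s_2p_2^{-1}\cdots p_{|w|-1}s_{|w|}p_{|w|}^{-1}.
\]
Free reduction of the cancelling pairs $p_i^{-1}p_i$ then yields exactly $w$, since $p_0$ and $p_{|w|}$ are empty.

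One technicality is that the free reductions must respect the block structure, so that the rewrites in different blocks do not interfere. I will handle this with \cref{lemma:k_steps_criterion}, which allows all reductions to be postponed to the end. Insertions happen at the boundaries of $\sigma_i$, and each block's word is built up independently.

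\emph{Main obstacle.} The hard part is making "transform $\sigma_i$ into $p_{i-1}s_ip_i^{-1}$ in bounded steps" precise. The difficulty is that $\widetilde D_R$ only inserts relators, and it never deletes letters except by free reduction. The plan is to write $p_{i-1}s_ip_i^{-1}=\sigma_i\,\rho_i$, where $\rho_i:=\sigma_i^{-1}p_{i-1}s_ip_i^{-1}$ is a null-homotopic word of bounded length. The word $\sigma_i\rho_i$ is $\sigma_i\sigma_i^{-1}p_{i-1}s_ip_i^{-1}$, which freely reduces to the target. So it suffices to insert a word that freely equals $\rho_i$ right after $\sigma_i$. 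By the choice of $N$ this takes at most $N$ insertion steps, plus free reductions that stay local to the block. Hence $n=N$, or $N+1$ to be safe, works uniformly in $w$.
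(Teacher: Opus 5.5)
Your proposal is correct and follows essentially the same route as the paper. Both cut $w'$ into blocks of length at most $c$, join $w_i$ to $w'_{f(i)}$ by geodesics of length at most $k$, rewrite each block into the corresponding detour word in a uniformly bounded number of steps, and combine the blocks in parallel (where the paper cites \cref{lemma:ConcatenationRewritingSteps}, you argue directly), before deleting the cancelling connector pairs to reach $w$.

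Two small slips need fixing, neither a real gap. First, with $p_i$ running from $w_i$ to $w'_{f(i)}$, the block target is $p_{i-1}^{-1}s_ip_i$, not $p_{i-1}s_ip_i^{-1}$. Second, $N$ should be a maximum over all words of length at most $2k+c+1$ representing $1_G$, not only freely reduced ones. The reason is that $\rho_i$ need not be freely reduced, and you must produce it exactly so that the final deletions yield $w$ rather than merely $\overline{w}$.
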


The following lemma, which follows easily from the definitions, will aid the proof of \cref{thm:algorithm_terminates}.
\begin{lemma}\label{lemma:ConcatenationRewritingSteps}
	Let $G=\langle S \:|\:  R\rangle$ be a finitely presented group, let $m, n \in \mathbb{N}$ be strictly positive integers, and let $w_1, \dots, w_m, w'_1, \dots, w'_m \in (S^\pm)^*$ be words such that each $w'_i$ can be obtained in at most $n$ steps from $w_i$.
	Then, $w'_1w'_2 \dots w'_m$ can be obtained in $n$ steps from $w_1 w_2 \dots w_m$.
\end{lemma}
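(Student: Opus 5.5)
The plan is to first reduce to "exactly $n$ steps", and then handle the concatenation with \cref{lemma:k_steps_criterion}. Being one step away is reflexive: take $w''=w$, using the trivial rewriting step $w\xrightarrow[R]{}w$ (the case $n=1$ in the definition of a $\widetilde D_R$-step) followed by zero free reductions. So if $w'_i$ is obtained from $w_i$ in $n_i\le n$ steps, we can pad the chain with $n-n_i$ trivial steps. Hence we may assume each $w'_i$ is exactly $n$ steps away from $w_i$.

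By \cref{lemma:k_steps_criterion}, for each $i$ there is a word $w''_i$ with
\[
w_i\xrightarrow[R]{n} w''_i\xrightarrow[\red]{*} w'_i .
\]
The key step is to show that $\widetilde D_R$-rewriting steps are compatible with concatenation. Suppose $u_i\xrightarrow[R]{}v_i$ for each $i$. Each $u_i$ has a factorisation $u_i=u_i^{(1)}\cdots u_i^{(n_i)}$, with $v_i$ obtained by inserting relators or words $ss^{-1}$ between consecutive factors. Concatenating these factorisations, merging the last factor of $u_i$ with the first factor of $u_{i+1}$, gives a factorisation of $u_1\cdots u_m$. The insertions into this factorisation produce exactly $v_1\cdots v_m$. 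The only thing to check is insertions at the ends of the $u_i$. These are allowed because factors may be empty, so we can split the merged factor at the boundary using an empty factor. Thus $u_1\cdots u_m\xrightarrow[R]{}v_1\cdots v_m$.

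Applying this $n$ times, step by step along the $n$ chains in parallel, gives $w_1\cdots w_m\xrightarrow[R]{n}w''_1\cdots w''_m$. Free reductions are clearly compatible with concatenation, since a reduction inside one factor is a reduction of the whole word. Performing the reductions of the factors one after another therefore gives $w''_1\cdots w''_m\xrightarrow[\red]{*}w'_1\cdots w'_m$. By \cref{lemma:k_steps_criterion} again, $w'_1\cdots w'_m$ is $n$ steps away from $w_1\cdots w_m$.

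The only mildly delicate point is the bookkeeping of the boundary factors in the concatenation of rewriting steps. The convention that the $u^{(i)}$ may be empty handles this directly, so I expect no real obstacle.
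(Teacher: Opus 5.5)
Your argument is correct and is essentially what the paper intends. The paper gives no written proof, only the remark that the lemma follows easily from the definitions: pad with trivial steps, and use that $\widetilde D_R$- and $\widetilde D_{\red}$-rewriting steps are compatible with concatenation, where merging the boundary factors, as you do, handles this correctly.

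The detour through \cref{lemma:k_steps_criterion} is not needed. You can concatenate each single step $w_i\xrightarrow[R]{}w_i''\xrightarrow[\red]{*}w_i'$ directly and induct on the number of steps. That route also avoids depending on \cref{lemma:sameg_kstepsaway}, for which the paper only works an example.
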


\begin{proof}[Proof of \cref{thm:algorithm_terminates}]
	\cref{corollary:steps} tells us that whenever $u$ and $v$ are words in $ (S^\pm)^*$ with $u=_Gv$, there is some minimal $k_{u,v}\in \mathbb{N}$ such that $u$ is $k_{u,v}$ steps from $v$. Define $n\in \mathbb{N}$ by 
	\[
		n =\max\{k_{u,v} \mid  u=_Gv,\ 
		|u|\leq 2k+1,\ |v|\leq c\}
	\]
This maximum exists since the elements $u$ and $v$ under consideration lie in a bounded ball in the Cayley graph of $G$ with respect to $S$. We will show that the  set of $L$-representatives $J= L\cap \mupi^{-1}(K)$ is a subset of $Q_n$.
	
	Let $w$ be an arbitrary word in the language $J$.
	By assumption, there exists a word $w'\in Q_0$ with $w=_Gw'$ such that the paths in the Cayley graph of $G$ corresponding to the words $w$ and $w'$ $k$-fellow travel, up to $c$-reparametrisation. Let $f\colon [0, |w|] \rightarrow [0, |w'|]$ be the reparametrisation map so that, for the paths labelled by $w$ and $w'$, following the notation defined in \cref{sec:Notation},  we have
        $0\le f(i+1)-f(i)\le c$ and $d(w_i, w'_{f(i)}) \leq k$ for all $0\leq i \leq |w|$.
	
	\begin{figure}[ht]
		\centering
\begin{tikzpicture}[thick,scale=.8,arrowmark/.style 2 args={decoration={markings,mark=at position #1 with \arrow{#2}}}]%
  \tikzstyle{every node}=[circle, draw, fill=blue, color=blue,
                        inner sep=0pt, minimum width=6pt]
 \begin{pgfonlayer}{background}                       
                        \draw[->,>=stealth,thick] (0,0) -- (2.9,1);
                        \draw[->,>=stealth,thick] (3,1)--(5.9,1);
                        \draw[->,>=stealth,thick] (6,1)--(8.9,1);
                        \draw[->,>=stealth,thick] (9,1)--(11.9,0);
                        \draw[->,>=stealth,thick,decorate,decoration={snake, segment length=5mm, amplitude=.5mm}] (0,0)--(2.9,-3.9);
                        \draw[->,>=stealth,thick,decorate,decoration={snake, segment length=5mm, amplitude=.5mm}] (3,-4)--(5.8,-4);
                        \draw[->,>=stealth,thick,decorate,decoration={snake, segment length=5mm, amplitude=.5mm}] (6,-4)--(8.8,-4);  
                        \draw[->,>=stealth,thick,decorate,decoration={snake, segment length=5mm, amplitude=.5mm}] (9,-4)--(11.9,-.1);
\end{pgfonlayer}                      
\draw (0,0) ++(-1.25,0) node[draw=none,fill=none, color=black] {$w_0=w'_{f(0)}$};
\draw (12,0) ++(+1.25,0) node[draw=none,fill=none, color=black] {$w_4=w'_{f(4)}$};
\foreach \x in {0,4}{
  \draw (\x*3,0) node (\x) {};
  \begin{pgfonlayer}{background}  
    \draw (\x*3,0) node[draw=none,fill=none, color=white, inner sep=0pt, minimum width=11pt] (B\x) {};
   \end{pgfonlayer}  
 }
 \foreach \x in {1,2,3}{
   \draw (\x*3,1) node (\x) {};
   \draw (\x*3,-4) node (-\x) {};
   \begin{pgfonlayer}{background}  
     \draw (\x*3,1) node[draw=none,fill=none, color=white, inner sep=0pt, minimum width=11pt] (B\x) {};
     \draw (\x*3,-4) node[draw=none,fill=none, color=white, inner sep=0pt, minimum width=11pt] (B-\x) {};
   \end{pgfonlayer}  
   
   \draw (\x*3,0) ++(0,1.5) node[draw=none,fill=none, color=black] {$w_{\x}$};
   \draw (\x*3,-4) ++(0,-.5) node[draw=none,fill=none, color=black] {$w'_{f(\x)}$};

   \draw[->,>=stealth,thick,decorate,decoration={snake, segment length=7mm, amplitude=.3mm}] (B\x) -- (B-\x);
 }

\draw[->,>=stealth,thick,dotted] (B-1) +(.1,.3) to[out=80, in=100, looseness=5.5] (5.8,-3.8);
\draw (.8,-2) node[draw=none,fill=none, color=black]  {$\gamma(1)$};
\draw (4.5,-4.5) node[draw=none,fill=none, color=black]  {$\gamma(2)$};
\draw (7.5,-4.5) node[draw=none,fill=none, color=black]  {$\gamma(3)$};
\draw (11.2,-2) node[draw=none,fill=none, color=black]  {$\gamma(4)$};

\draw (2.5,-1) node[draw=none,fill=none, color=black]  {$\delta(1)$};
\draw (6.7,-1) node[draw=none,fill=none, color=black]  {$\delta(2)$};
\draw (9.7,-1) node[draw=none,fill=none, color=black]  {$\delta(3)$};

\draw (1.5,1) node[draw=none,fill=none, color=black]  {$w(1)$};
\draw (4.5,1.5) node[draw=none,fill=none, color=black]  {$w(2)$};
\draw (7.5,1.5) node[draw=none,fill=none, color=black]  {$w(3)$};
\draw (10.5,1) node[draw=none,fill=none, color=black]  {$w(4)$};

\draw (4.5,-.5) node[draw=none,fill=none, color=black]  {$\alpha(2)$};
\end{tikzpicture}
                \caption{Notation for the proof of \cref{thm:algorithm_terminates}}
		\label{fig:proof_qcu}
	\end{figure}
	See \cref{fig:proof_qcu} for an illustration of the following  paragraph. Following \cref{sec:Notation} denote by $w(1),\ldots,$ $w(|w|)$ the letters in $w$, which we can think of as $S$-labelled, 1-edge subpaths of $w$ in the Cayley graph. 
	Let $\gamma(1), \gamma(2),  \ldots, \gamma(|w|)$ be the subwords of $w'$ defined by respectively  taking its first $f(1)$ letters, then the next $f(2)-f(1)$ letters, etc. until finally making $\gamma(|w|)$ out of the last $f(|w|)-f(|w|-1)$ letters of $w'$. Each $\gamma(i)$ can be thought of as a subpath of $w'$ in the Cayley graph  of length less than or equal to $c$.
	Now, let $\delta(i)\in (S^\pm)^*$ be a word representing a geodesic path between the vertices $w_i$ and $w_{f(i)}$, for $i = 0, \ldots, |w|$. Note that such words have length less or equal than $k$, and that $\delta(0), \delta(|w|)$ are trivial paths.
	Lastly, define $\alpha(i)$ to be the path in the Cayley graph (and the word in $(S^\pm)^*$) defined by concatenating the paths $\delta^{-1}(i-1)\cdot w(i)\cdot\delta(i)$, where $\delta^{-1}$ denotes the natural inverse path. The paths $\alpha_1,\ldots,\alpha_{|w|} $ all have length less than or equal to $2k+1$. 
	
	Note that each of the paths $\alpha(i)$ defines a word equivalent  to $\gamma(i)$ in $G$, and because $|\alpha(i)|\le 2k+1$ and
        $\gamma(i)\le c$ the word $\alpha(i)$ is at most $n$ steps away from $\gamma(i)$.  By  \cref{lemma:ConcatenationRewritingSteps} we conclude that the word defined by $\alpha = \alpha(1)\cdot \ldots \cdot \alpha(|w|)$ is at most $n$ steps away from $w' = \gamma(1)\cdot \ldots \cdot \gamma(|w|)$  and, since $w'$ is in $Q_0$, this means that $Q_n$ contains the word defined by $\alpha$.
	Furthermore,  $\alpha=\alpha(1)\cdot \ldots \cdot \alpha(|w|)$  is freely reducible to $w = w(1)\cdot \ldots \cdot w(|w|)$ which means that $w$ is also accepted by $Q_n$ (since its language is closed under free reductions).

        For the final statement; since the intersection of two regular languages is regular,
\mcomment{AD: added proof of final statement}        it follows that $\mu^{-1}(K)\cap L=Q_n\cap L$ is regular, so $K$ is $L$-recognisable.
\end{proof}

\begin{definition}\label{def:flower}
Given a set $T$ of words over $S^\pm$, we define the \emph{flower automaton}
$\Flower(T)$, which accepts $T^*$, to be the automaton over $S^\pm$ formed 
by attaching disjoint 
directed  loops, each labelled by an element $t \in T$
to a vertex $s_0$ that is the 
both the start state and the single accepting state.
\end{definition}
\mcomment{SR: added def of $\Flower(T)$. I think it is supposed to accept $T^*$ and not  $(T^\pm)^*$. But I'd like to clarify this. AD: I think it's correct now.}

As a consequence of \cref{thm:algorithm_terminates}
 and \cref{thm:weaklyLgood_implies_Lgood_submonoids}, we have the following result.
\begin{thm}\label{thm:submonoid_corollary}
	Let $(G, L)$ be a rational structure for a finitely presented group 
$G = \langle S|R\rangle $, let $T\subseteq (S^\pm)^*$ be a finite set of 
words and consider the regular language $Q_0:=T^*$ accepted by the flower automaton 
$\A_0:=\Flower(T^*)$ (\cref{def:flower}).
	If the submonoid $M = \mupi(Q_0)$ is weakly-$L$-\good\  with respect to $Q_0$, with constant $k$ (see \cref{def:WeaklyGoodQ}) then there exists $n\in \mathbb{N}$ such that the full set of $L$-representatives of $M$ is contained in  $Q_n$, the language accepted by the automaton $\A_n$ whose construction is described above.
\end{thm}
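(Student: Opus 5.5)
The plan is to reduce the statement directly to \cref{thm:algorithm_terminates}, using \cref{thm:weaklyLgood_implies_Lgood_submonoids} to upgrade weak proximacy of $T^*$ to proximacy.

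First, since $T$ is finite and $T^*$ is weakly-$L$-\good\ with constant $k$, \cref{thm:weaklyLgood_implies_Lgood_submonoids} gives explicit constants
\[
k' = k+\ell, \qquad c' = c\ell .
\]
Here $\ell=\max\{|m|_S : m\in T\}$ and $c$ is the maximal $T$-length of elements of $M$ of $S$-length at most $2(k+\ell)+1$. Note that $c$ is finite because the ball in question is finite and each of its elements lying in $M$ has some finite $T$-length. With these constants, $Q_0=T^*$ is $L$-\good\ over $G$ in the sense of \cref{def:GoodQ}. It is important that the proximacy obtained is with respect to the same language $Q_0=T^*$, not merely with respect to some other regular language mapping onto $M$. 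The proposition guarantees exactly this: the words $w''=x_1\cdots x_{|w|}$ it produces lie in $T^*$.

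Second, $Q_0=T^*$ is regular, being accepted by the finite automaton $\A_0=\Flower(T)$ of \cref{def:flower}. So all hypotheses of \cref{thm:algorithm_terminates} hold for $\A_0$, with constants $k',c'$ and $K=\mu(Q_0)=M$. That theorem yields $n$, namely the maximum of $k_{u,v}$ over pairs with $u=_G v$, $|u|\le 2k'+1$ and $|v|\le c'$. For this $n$ we get $L\cap\mu^{-1}(M)\subseteq Q_n$, where $Q_n=L(\A_n)$ is built from $\A_0$ by \cref{construction:completion}. This is exactly the claim.

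The only point needing care, and hence the main (mild) obstacle, is checking that the reparametrisation produced in \cref{thm:weaklyLgood_implies_Lgood_submonoids} meets the precise requirements of \cref{def:GoodQ}. These are $f(0)=0$, $f(|w|)=|w''|$, increments in $[0,c']$, and the distance bound at every index. Concretely, the map is $i\mapsto |x_1\cdots x_i|$. Its increments are $|x_{i+1}|\le c\ell$, and they are nonnegative. The endpoint condition holds because $g(|w|)$ is the end of $w'$, a $T$-prefix. The distance bound $d(w_i,w''_i)\le k+\ell$ holds because $w''_i=w'_{g(i)}$. Once this is confirmed, the result follows immediately.
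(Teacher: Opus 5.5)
Your proposal is correct and follows the paper's route exactly: the paper obtains this theorem directly as a consequence of \cref{thm:weaklyLgood_implies_Lgood_submonoids}, which upgrades weak proximacy of $T^*$ to proximacy with respect to the same language $T^*$ with constants $k+\ell$ and $c\ell$, combined with \cref{thm:algorithm_terminates}. Your extra check that the reparametrisation $i\mapsto |x_1\cdots x_i|$ satisfies the endpoint, increment and distance conditions of \cref{def:GoodQ} correctly confirms what that proposition already provides.
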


%

\section{Application to submonoids of automatic groups}\label{sec:autsubmonoid} 

In this section we use the argument of   Kharlampovich, Miasnikov and Weil \cite{kmw17} to turn the procedure 
of the previous section into a partial algorithm 
which will construct an automaton accepting the set of $L$-representatives of a given finitely generated monoid of an automatic group.
The main ingredient is a test to show whether or not the process has completed. The test works for submonoids because the automaton corresponding to a finite
generating set of a monoid has start state equal to its unique accept state. As we shall see, this means that the following lemma, from \cite{kmw17}, gives rise to the required test.  Throughout the section, the conventions for symmetric presentations of a group remain as in \cref{sec:Construction}.

For the remainder of this section let $G$ be an automatic group with generating set $S$ and $L\subseteq (S^\pm)^*$ be the language of this automatic structure.
\begin{lemma}[Lemma 4.11 in \cite{kmw17}]\label{lemma:kmw_technical}
	Given $h\in F(S)$ and a regular language $Q$ contained in $L$,
        there exists an algorithm that constructs an automaton accepting the set $\M_h(Q)=L\cap \mupi^{-1}(\mupi(Qh))$
        of all the $L$-representatives of the elements of $\mupi(Qh)$.
 \end{lemma}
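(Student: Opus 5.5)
The plan is to build the required automaton from the automatic structure and the multiplier automata, using only finite automata constructions.

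\textbf{Step 1: multiplier for $h$.} Write $h\in F(S)$ as a freely reduced word $h=s_1\cdots s_m$ with $s_i\in S^\pm$. Starting from the multiplier automata $\M_{s}$ (for $s\in S^\pm\cup\{1_G\}$), we construct a two-tape padded automaton $\M_h$ accepting the pairs $(u,v)\in L\times L$ with $us_1\cdots s_m=_G v$. This is the standard composition of multipliers. Take $\M_h$ to be the composite of the relations of $\M_{s_1},\dots,\M_{s_m}$. The composite of synchronous rational relations given by padded two-tape automata is again given by a padded two-tape automaton, computable by projecting out the intermediate tapes from the product automaton on $m+1$ tapes. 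This is exactly the construction of \cite[Lemma 3.10]{kmw17} and \cite{echlpt92}, and it is effective. For $h=\emptyword$ we use $\M_{1_G}$, which is the equality recogniser of $L$.

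\textbf{Step 2: restricting the first tape.} Intersect $\M_h$ with the padded automaton for $Q\times(S^\pm)^*$. Since $Q\subseteq L$ is regular, the padded language $\{(u,v)^{\text{pad}}: u\in Q\}$ is regular. So the intersection accepts exactly the pairs $(u,v)$ with $u\in Q$, $v\in L$ and $\mu(v)=\mu(uh)$.

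\textbf{Step 3: projection.} Project onto the second tape and strip the padding symbol. Projection of a padded regular language to one coordinate is regular, and the automaton is obtained simply by relabelling edges and then removing $\varepsilon$-transitions, as in \cite[Section 2.5.1]{hrr17}. The resulting language is
\[
\{v\in L:\exists u\in Q,\ \mu(v)=\mu(u)\mu(h)\}=L\cap\mu^{-1}(\mu(Qh)),
\]
and the equality holds because $\mu(Qh)=\mu(Q)\mu(h)$.

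\textbf{Main obstacle.} The only delicate point is Step 1: the effective composition of multipliers, together with correct handling of padding when the two words have different lengths. Here I would rely on the standard fact that synchronous rational relations on a padded alphabet are closed under composition and projection, with explicit constructions; see \cite{echlpt92}. Everything else is routine closure of regular languages under intersection and homomorphic image.
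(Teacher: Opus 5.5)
Your proof is correct. You compose the generator multipliers to get a padded automaton for $\{(u,v)\in L\times L : uh=_G v\}$, restrict the first tape to $Q$ (which is where $Q\subseteq L$ is needed), and project onto the second tape; the paper gives no proof of its own, only the citation \cite[Lemma 4.11]{kmw17}, and this is the standard argument behind that lemma.
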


 \begin{thm}\label{thm:subpgalg}
   Let $T$ be a finite subset of $S^\pm$,
   such that the submonoid $M=\mupi(T^*)$ of $G$ generated by $T$ is
   weakly-$L$-\good. Let $\A_0:= \Flower(T)$  (\cref{def:flower}),
and
   let $\A_0,\A_1,\ldots $ be the sequence of automata
   constructed in \cref{sec:Construction}. Then, for $i \ge 0$, the automaton
   $\A_i$ may be effectively constructed and
   there is an algorithm 
   which, on input $T$, 
   outputs an integer $n\ge 0$ 
   such that
     the language $L(\A_n)$ contains all $L$-representatives of  $M$.
     \end{thm}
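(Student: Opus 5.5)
The plan is to run the procedure of \cref{sec:Construction} and, after each iteration, apply a decidable test that succeeds exactly when $L(\A_n)$ already contains every $L$-representative of $M$. \cref{thm:submonoid_corollary} guarantees that such an $n$ exists, so the algorithm halts.

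\emph{Effectiveness.} Since $G$ is automatic it is finitely presented, so we may fix a finite symmetric presentation $\langle S\mid R\rangle$. Each step of \cref{construction:completion} is then effective: attaching the finitely many relator cycles and $ss^{-1}$ cycles at each vertex is finite; Benois' algorithm terminates because it only adds $\varepsilon$-edges between existing vertices; and determinisation is standard. Hence each $\A_i$ can be constructed from $\A_{i-1}$. Set $P_n := L\cap L(\A_n)$, which is regular and computable as an intersection of regular languages. By \cref{prop:LanguageOfAn}, $\mu(L(\A_n))=\mu(T^*)=M$. Writing $J := L\cap\mu^{-1}(M)$ for the set of $L$-representatives of $M$, we therefore have $P_n\subseteq J$ for every $n$.

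\emph{The test.} At stage $n$ we check two inclusions of regular languages, both decidable.
\begin{enumerate}
\item[(a)] The set $B_1$ of $L$-representatives of $1_G$ is contained in $L(\A_n)$. The automaton for $B_1$ is computable by Lemma 3.10 of \cite{kmw17}, as used in \cref{prop:AutomaticLRecognisableMembership}.
\item[(b)] For each $t\in T$, $\M_t(P_n)\subseteq L(\A_n)$. Here $\M_t(P_n)$ is the set of $L$-representatives of $\mu(P_n t)$, and its automaton is computable by \cref{lemma:kmw_technical} applied with $Q=P_n\subseteq L$ and $h=t$.
\end{enumerate}

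\emph{Soundness.} Suppose both (a) and (b) hold. We show by induction on $|m|_T$ that every $L$-representative of every $m\in M$ lies in $L(\A_n)$. The base case $m=1_G$ is exactly (a). For the inductive step, write $m=m't$ with $t\in T$ and $|m'|_T=|m|_T-1$. Since $L$ maps onto $G$, $m'$ has an $L$-representative $u$, and $u\in P_n$ by the induction hypothesis. Every $L$-representative of $m=\mu(ut)$ then lies in $\M_t(P_n)$, which is contained in $L(\A_n)$ by (b). So if the test succeeds, $J\subseteq L(\A_n)$ and we may output $n$.

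\emph{Completeness.} By \cref{thm:submonoid_corollary} there is some $n$ with $J\subseteq L(\A_n)$. For that $n$ we have $P_n=J$. Condition (a) holds because $B_1\subseteq J$. Condition (b) holds because $\mu(P_n t)\subseteq M$ gives $\M_t(P_n)\subseteq J\subseteq L(\A_n)$. So the search over $n=0,1,2,\dots$ terminates.

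The main obstacle is the soundness direction, and in particular ensuring that the closure conditions refer only to $P_n$, which is the regular input that \cref{lemma:kmw_technical} requires. This is where the flower automaton's start state coinciding with its unique accept state matters. It is what guarantees that $L(\A_n)$ never contains words representing elements outside $M$, so that $P_n\subseteq J$ at every stage and the test can only fail by missing representatives, never by accepting spurious ones. Showing that the induction on $|m|_T$ goes through using only the single-generator multipliers $\M_t$ is the step to check carefully.
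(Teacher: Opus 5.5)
Your proposal is correct and follows the paper's proof. It uses the same halting test: closure of $L(\A_n)$ under the multipliers $\M_t$ applied to $L\cap L(\A_n)$, together with an identity check. Soundness is by induction on $T$-length and completeness comes from \cref{thm:submonoid_corollary}. The only difference is that you test $B_1\subseteq L(\A_n)$ directly, where the paper checks $w_\ve\in Q_n$ together with $\M_\ve(Q_n')\subseteq Q_n$.

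One small correction concerns your closing remark. The inclusion $P_n\subseteq J$ holds for any starting automaton, since $\mu(L(\A_n))=\mu(L(\A_0))$ by \cref{prop:LanguageOfAn}. What the submonoid (flower) structure actually provides is the generation $M=\{1_G\}\cup MT$ that drives your induction.
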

 \begin{proof}
   First we describe the algorithm and then we verify that it works as required.
   The algorithm begins by finding an element $w_\ve\in L$ such that
   $\mupi(w_\ve)=1_G$. As $G$ is automatic this can be done using the multiplier automaton for $\ve$. After this pre-processing step, 
   for each $i\ge 0$ until it halts, the algorithm consists of $3$ steps. The input at
   each iteration is the automaton $\A_i$ and we set $Q_i:=L(A_i)$.
   In the first step the algorithm checks to see that $w_\ve$ is in $Q_i$.
   If not then the algorithm passes to the third step described below.
   If $w_\ve \in Q_i$ the algorithm carries out its second step in which it
   \begin{itemize}
   \item constructs an automaton for the language $Q_i':=L\cap Q_i$;
   \item constructs an automaton accepting $\M_t(Q_i')$, for each $t\in \{\ve\}\cup T$;
   \item tests to see whether or not $\M_t(Q_i')$ is contained in $Q_i'$.
   \end{itemize}
   If the answer to the test is ``yes'', for all $t\in\{\ve\}\cup T$,  the algorithm halts and outputs
   $i$ and $\A_i$. 
   Otherwise the algorithm continues to step $3$ in which \cref{construction:completion} is used to construct $\A_{i+1}$; and then  moves on to         iteration $i+1$.

   From \cref{thm:submonoid_corollary}, \cref{lemma:kmw_technical} and the standard theory of finite state automata,
   it follows that this procedure may be executed, and effectively constructs $\A_{i}$ at the $i$th iteration. To complete the proof  we show that
   it halts and that, when it does so, the final claim holds.

   First suppose that, for some $n\ge 0$, the language $Q_n:=L(\A_n)$ contains $L\cap \mupi^{-1}(M)$. Then, in particular,
   $w_{\ve}\in Q_n$ and $L\cap \mupi^{-1}(M)\subset L\cap Q_n=Q_n'$. As
   $\mupi(Q_n')=\mupi(Q_n)=M$ and $\mupi(t) \in M$, for all $t\in \{\ve\}\cup T$, we have $\mupi(Q_n't)\subseteq M$ so $L\cap \mupi^{-1}(\mupi(Q_n't))\subseteq L\cap \mupi^{-1}(M)
   \subseteq Q_n$. Hence $\M_t(Q_n')\subseteq Q_n$, for all $t\in \{\ve\}\cup T$. 

   Conversely suppose that, for some $n\ge 0$, $w_\ve \in Q_n$ and, for all $t\in \{\ve\}\cup T$,  we have $\M_t(Q_n')\subseteq Q_n$.
   Suppose $w\in Q_0$, so
   $w=t_1\cdots t_m$, where $t_i\in T$, $m\ge 0$. If $m=0$ then $w=\ve$. As $w_\ve\in Q_n'$, we have $Q_n'\neq \emptyset$, so all
   $L$-representatives of $\mupi(\ve)=\mupi(w_\ve)$ belong to $\M_\ve(Q_n')\subseteq Q_n$. 

   Now assume that $m\ge 1$, and that if $u$ is a product of fewer than $m$ elements of $T$, all $L$-representatives of $\mupi(u)$ belong to $Q_n$.
   Then the $L$-representatives of $\mupi(t_1\cdots t_{m-1})$ (of which there is at least one) belong to $Q_n'$, so the $L$-representatives of $\mupi(w)$ all
   belong to $\M_{t_n}(Q_n')\subseteq Q_n$; whence all $L$-representatives
   of elements of $\mupi(Q_n)=M$  belong to $Q_n$.

   We have shown that $L\cap \mupi^{-1}(M)\subseteq Q_n$ if and only if both $w_\ve\in Q_n$
   and $\M_{t}(Q_n')\subseteq Q_n$, for all
   $t\in \{\ve\}\cup T$. Therefore the algorithm halts with the required output.
 \end{proof}
 \begin{cor}
   In \mcomment{AD: added corollary about membership problem} the notation and terminology of \cref{thm:subpgalg} an automaton recognising
   $L\cap \mu^{-1}(M)$ may be effectively computed and  the submonoid membership
   problem of $M$ is constructively decidable. 
 \end{cor}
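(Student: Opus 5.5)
The corollary should follow by running the algorithm of \cref{thm:subpgalg} and then intersecting with $L$. On input $T$, that algorithm outputs an integer $n$ together with the automaton $\A_n$, whose language $Q_n$ contains all $L$-representatives of $M$. The algorithm is effective: each $\A_i$ is built by \cref{construction:completion} (adding relator cycles, running Benois' algorithm, determinising), the test uses \cref{lemma:kmw_technical}, and it halts by the argument in the proof of \cref{thm:subpgalg}. So the first step is simply to run it and record $\A_n$.

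Next I would argue that $L\cap Q_n = L\cap\mu^{-1}(M)$. One inclusion is the output guarantee of \cref{thm:subpgalg}. For the other, \cref{prop:LanguageOfAn} shows that every word of $Q_n$ is a finite number of steps away from a word of $Q_0=T^*$. Each step only inserts relators or trivial words $ss^{-1}$ and performs free reductions, so it preserves the image in $G$. Hence $\mu(Q_n)=\mu(T^*)=M$, which gives $L\cap Q_n\subseteq L\cap\mu^{-1}(M)$.

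Since $L$ is regular with an automaton given as part of the automatic structure, a product automaton for $L\cap Q_n$ can be built explicitly. This is the effectively computed automaton recognising $L\cap\mu^{-1}(M)$.

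For constructive decidability of membership, I would follow \cref{prop:AutomaticLRecognisableMembership}. Given $g\in G$ as a word $h$, use \cref{lemma:kmw_technical} (or Lemma 3.10 of \cite{kmw17}) with $Q=\{w_\ve\}$ to construct an automaton $B_h$ for the $L$-representatives of $g$. Then $g\in M$ if and only if $L(B_h)\cap L\cap Q_n\neq\emptyset$, and emptiness of a regular language is decidable. Every ingredient here is produced from $T$ and the automatic structure, so the algorithm is constructed rather than merely shown to exist.

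The only point needing care is the inclusion $\mu(Q_n)\subseteq M$, and that rests on rewriting steps preserving group elements; I expect no real obstacle.
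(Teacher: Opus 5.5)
Your proposal is correct and follows essentially the same route as the paper: you intersect the output automaton $\A_n$ of \cref{thm:subpgalg} with the automaton for $L$, and you decide membership via the argument of \cref{prop:AutomaticLRecognisableMembership}. Your extra check that $\mu(Q_n)=M$ makes explicit a point the paper leaves implicit, namely that $L\cap Q_n$ equals, and does not merely contain, the set of $L$-representatives of $M$.
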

 \begin{proof}
   To obtain an automaton accepting exactly $L\cap \mupi^{-1}(M)$ we use a standard algorithm for
   an automaton for the language $L\cap L(\A_n)$, where $\A_n$ is output from the algorithm of \cref{thm:subpgalg}. For the membership problem we use the argument of \cref{prop:AutomaticLRecognisableMembership} and the automaton $A_n$.
 \end{proof}
 \begin{remark}
   In general, \mcomment{AD: added a comment to emphasise lack of constructibility} although $L$-\good\ rational subsets of a group $G$ have decidable membership problem,  whether this is constructive decidability is open.
   For example, 
   the proof of \cref{prop:AutomaticLRecognisableMembership}  breaks down if $Q_0$ is an arbitrary regular subset of $(S^\pm)^*$. The difficulty is to find an analogue of the generating set $T$ and
   then an analogue of the statement that given that $w_{\ve}\in Q_n$ then the fact that $\M_t(Q'_n)\subseteq Q_n$, for all $t$, implies that
   $L\cap \mupi^{-1}(M)$ is contained in $Q_n$. Both of these difficulties arise when $Q_0$ has distinct start and final state. In the first author's thesis
   further analysis of the general case  of $L$-\good\ rational subsets is carried out and some (technical) conditions are identified, under which a halting
   test can be devised. 
   \end{remark}

\section{Application to submonoids of surface groups} 

 The goal of this section is  to give examples of weakly-$L$-\good\ submonoids that lie inside surface groups, where $L$ is the language of geodesics relative to a standard generating set. We use tools from small cancellation theory to ensure that the necessary convexity conditions hold. \mcomment{JB: specified $L$}

 \begin{defn}
 	The \emph{surface group} $\Sigma_g$ of genus $g\geq 1$ is the group with presentation
 	$$\Sigma_g \equiv \langle a_1, b_1 \ldots a_g, b_g \mid [a_1, b_1]\cdots[a_g, b_g] = 1\rangle.$$
 \end{defn}
 We shall write $Y_g$ for the set of generators in the presentation above, and $r_g$ for the single relator.

 The group $\Sigma_g$ is the fundamental group of a closed, compact, orientable surface $S_g$ of genus $g$. Except where otherwise stated, we assume that ${g>1}$ throughout this section. In this case $\Sigma_g$ is a hyperbolic group and therefore automatic, with a rational structure $L$ given by geodesics with respect to $Y_g$. \mcomment{JB: clarified that geodesics are with respect to the given generators}
 
 Given a set $R$ of relators over a generating set $S$, the \emph{symmetrisation} $R_*$ is the set of cyclic permutations of words in $R$, together with their inverses. We write $R_g$ for the symmetrisation of $\{r_g\}$. Recall that Dehn's rewriting system for $\Sigma_g$ has the following rewriting rules:
 	\begin{enumerate}
 		\item $ss^{-1} \to \varepsilon$, for a letter $s$ in $Y_g^\pm$.
 		\item $u\to v$, for words $u, v$ in $(Y_g^\pm)^*$ such that $uv^{-1}\in R_g$ and $|u|>|v|$. 
 	\end{enumerate}
 	A theorem of Dehn states that any word in ($Y_g^\pm)^*$ which represents the identity of $\Sigma_g$ can be reduced to $\varepsilon$ by repeated applications of these rewriting rules (Dehn's algorithm).

        It is useful to think of the Cayley graph $\Gamma(\Sigma_g, Y_g)$ as a tessellation of the hyperbolic plane $\mathbb{H}^2$ by $4g$-gons with edges labelled by the relator $r$. The second of Dehn's rewriting rules replaces any path going around more than half of a $4g$-gon with the shorter route using the complementary edges. See \cref{fig:dehn_reduced} for some genus $2$ examples.
\begin{figure}[ht]
 	\centering
\begin{tikzpicture}
  \foreach \x in {0,1,2}{
    \begin{scope}[xshift=\x*4.5cm]
          \node[name=s\x-1, regular polygon, regular polygon sides=8, inner sep=0, minimum size = 2cm, draw] at (0,0) {};
          \begin{scope}[xshift=1.85cm]
            \node[name=s\x-2, regular polygon, regular polygon sides=8, inner sep=0, minimum size = 2cm ,draw] at (0,0) {};
          \end{scope}
    \end{scope}
  }
\draw[very thick]  (s0-1.corner 3) -- (s0-1.corner 2) --(s0-1.corner 1) -- (s0-1.corner 8) -- (s0-1.corner 7);
            \draw[very thick]  (s0-2.corner 3) -- (s0-2.corner 2) --(s0-2.corner 1) -- (s0-2.corner 8);
 \draw[very thick]  (s1-1.corner 3) -- (s1-1.corner 2) --(s1-1.corner 1) -- (s1-1.corner 8) -- (s1-1.corner 7);
 \draw[very thick]  (s1-2.corner 4) -- (s1-2.corner 5) --(s1-2.corner 6) -- (s1-2.corner 7) -- (s1-2.corner 8);
 \draw[very thick]  (s2-1.corner 3) -- (s2-1.corner 4) --(s2-1.corner 5) -- (s2-1.corner 6) -- (s2-1.corner 7);
            \draw[very thick]  (s2-2.corner 4) -- (s2-2.corner 5) --(s2-2.corner 6) -- (s2-2.corner 7) -- (s2-2.corner 8);  
        \end{tikzpicture}
 	\caption{Three examples of words in $\Sigma_2$ depicted as paths (in bold) along the Cayley graph of $\Sigma_2$, which tessellates the hyperbolic plane with octagons. The leftmost path is not Dehn-reduced because it exhibits backtracking, the middle one is not Dehn-reduced because it goes around $5$ edges of an octagon. The rightmost path represents a Dehn-reduced word.}
 	\label{fig:dehn_reduced}
 \end{figure}

\subsection{Results from small cancellation theory}

We shall need to introduce a result, Theorem 9.4, from \cite{mccammondwise02}. We recall some standard definitions from small cancellation theory, which can be found in \cite{ls79}, or in the small cancellation theory appendix to \cite{gh90}.

\begin{defn}
Let $P=\langle S\mid R\rangle$ be a group presentation. A word $u$ over $S$ is a \emph{piece} with respect to $P$ if $u$ is a common prefix of two distinct relators in $R_*$.
\end{defn}
\noindent For the standard presentation of $\Sigma_g$, the pieces are precisely the $1$-letter words.

\begin{defn}
Let $P=\langle S\mid R\rangle$ be a group presentation.
\begin{itemize}
\item For a natural number $p$, we say that a $P$ satisfies condition $\Ccond(p)$ if no element of  $R_*$ is the concatenation
  of fewer than  $p$ pieces. \mcomment{AD: changed the definition of  $\Ccond(p)$. }
\item For $q>2$ we say that $P$ satisfies condition $\Tcond(q)$ if for every $\ell\in\{3, 4, \ldots, q-1 \}$, and for every sequence $\{r_1, r_2, \ldots r_\ell\}$ in $R_*$ such that $r_i\neq r_{i+1}^{-1}$ for all $i$, and such that $r_\ell\neq r_1^{-1}$, at least one of the products $r_1r_2,\ldots, r_{\ell-1}r_\ell, r_1r_{l-1}$ is freely reduced. 
\end{itemize}
\end{defn}	
A presentation which satisfies both of the conditions $\Ccond(p)$ and $\Tcond(q)$ is said to be $\Ccond(p)$--$\Tcond(q)$.  We note that the standard presentation for $\Sigma_g$ is $\Ccond(4g)$--$\Tcond(4g)$; in particular, it follows that this presentation is $\Ccond(4)$--$\Tcond(4)$.
	
Recall that a Van Kampen diagram for the presentation $\langle S\mid R\rangle$
of a group $G$ is a finite, connected, contractible \mcomment{AD: added contractible} cell complex, embedded into $\R^2$, such that the edges are directed and labelled by elements of $S$, and such that the labels along the boundary of each $2$-cell spell a element of $R_*$.

A \emph{boundary cycle} of a Van Kampen diagram $X$ is a closed path $p$, based at a vertex of the boundary $\partial X$ and winding exactly
once around $X$ (with one of the two possible orientations). The label of a boundary path is a \emph{boundary word} of $X$.
 
A pair of discs $D_1$ and $D_2$ of a Van Kampen diagram is said to be
\emph{cancellable} if the boundaries of $D_1$ and $D_2$ contain a directed
edge $e$ such that the labels of $D_1$ and $D_2$ read by starting with the
edge $e$ are equal. A Van Kampen diagram which
contains no pair of cancellable discs it is said to be \emph{reduced}.
It is well known (see for example \cite{mccammondwise02}) that a word $w\in (S^\pm)^*$  represents the identity
element of $G$ if and only if it is a boundary word of a Van Kampen diagram for  $\langle S\mid R\rangle$.\mcomment{AD: added reduced diagram.}

An \emph{arc} in the Van Kampen diagram $X$ is a simple path in the $1$-skeleton of $X$ \mcomment{AD: added $1$-skeleton}  whose internal vertices have degree $2$; an arc is \emph{maximal} if it is not properly contained in any other arc. An arc is an \emph{internal arc} if its interior lies in the interior of $X$, and a \emph{boundary arc} if it lies entirely on the boundary of $X$. 
We notice that the labels along a maximal internal  arc in a reduced
\mcomment{AD: added reduced} Van Kampen diagram spell a maximal piece.
  Hence for the usual presentation of $\Sigma_g$, all  maximal internal arcs have length $1$. (That is to say, there are no internal vertices with degree $2$.)
  The following definitions are taken from \cite{mccammondwise02}.
\begin{defn}\label{def:spur} 
A \emph{spur} is a $1$-cell which is not in the boundary of a $2$-cell and which is attached to the rest of the diagram at only one end. An $i$-\emph{shell} is a $2$-cell with exactly $i$ 
maximal arcs of its boundary lying in the interior of the diagram and exactly
one maximal boundary arc.\mcomment{AD: added 1 bdry arc} 
\end{defn}	

For the next definition we follow \cite{wise04}.
\begin{defn}\label{def:ladders}	
	A \emph{ladder} is a Van Kampen diagram $X$ which is  the union of a sequence of closed 1-cells and 2-cells $c_1,\ldots, c_n$, such that for $1 < j < n$,
	there are exactly two components in $X-c_j$, such that
	$X-c_1$ and $X-c_n$ each has exactly one component, and such that any $c_i$ which is a 1-cell is not contained in any other closed $c_j$.
\end{defn}\mcomment{JB: changed ladder notation from $L$ to $X$  since $L$ is a language.} 
A ladder can be considered as a diagram with two distinguished points $x$ and $y$ on its boundary, such that if $P$ and $Q$ are the two boundary paths from $x$ to $y$, then every $2$-cell has at least one edge from $P$ and another from $Q$, and such that every edge not contained in a $2$-cell is common to both $P$ and $Q$. (We consider the paths $P$ and $Q$ as the rails of the ladder.) 

The following result is \cite[Theorem 9.4]{mccammondwise02}. 
\begin{prop}
	If $X$ is a Van Kampen diagram for a $\Ccond(4)$--$\Tcond(4)$ presentation, then one of the following holds:
	\begin{enumerate}
		\item $X$ contains at least $3$ spurs or $i$-shells with $i\leq 2$;
		\item $X$ is a ladder;
		\item $X$ consists of a single vertex or a single $2$-cell.
	\end{enumerate}
\end{prop}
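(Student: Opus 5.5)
This is McCammond--Wise's Theorem 9.4, so the plan follows the standard small cancellation curvature argument via a combinatorial Gauss--Bonnet formula.

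\emph{Reductions.} First reduce to the case where $X$ is a disc diagram with no cut vertices. If $X$ has cut vertices, decompose it into its tree of blocks: maximal disc components joined by $1$-cells, together with isolated edges. If this block tree has at least three leaves, or has two leaves and is not a ladder, then each leaf block should contribute a spur or a shell; a leaf block that is a single $2$-cell attached along at most one arc is a $0$- or $1$-shell. So we get three such features, giving (1). If the block tree is a path, then we need either that each block is a ladder whose end cells meet the adjacent $1$-cells compatibly (so the whole of $X$ is a ladder, case (2)), or that some block supplies extra features. Thus the main case is a disc diagram $D$ that is not a single $2$-cell.

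\emph{Curvature.} Assign to each corner of each $2$-cell an angle of $\pi/2$, except at vertices of degree $2$, which get angle $\pi$. By $\Ccond(4)$, every internal $2$-cell has at least four maximal internal arcs, so its curvature $2\pi - \sum(\pi - \text{angles})$ is at most $0$. By $\Tcond(4)$, every internal vertex of degree at least $3$ actually has degree at least $4$, so its curvature $2\pi - \sum \text{angles}$ is at most $0$. Gauss--Bonnet gives total curvature $2\pi$, and this must therefore be concentrated on the boundary. A boundary $2$-cell with $i$ internal arcs and a single boundary arc has positive curvature only when $i\le 2$: curvature $\pi$ for $i=0$ (impossible unless $D$ is one cell), $\pi/2$ for $i=1$, and $0$ for $i=2$ before boundary vertex corrections. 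Boundary vertices of degree $2$ contribute $\pi$ in the angle count, and boundary vertices of higher valence contribute nonpositive amounts.

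\emph{Key step and main obstacle.} The delicate step is the bookkeeping that turns total positive curvature $2\pi$ into either three shells with $i\le 2$ (or spurs), or an exact ladder configuration. Each $1$-shell carries curvature at most $\pi/2$ plus boundary terms, while $2$-shells and $2$-cells with several boundary arcs carry at most $0$. If only two positive cells occur, equality must hold everywhere, and the diagram should then be forced to be a linear chain of cells, each meeting the two boundary rails: that is a ladder. I would prove this rigidity by induction: remove an end $1$-shell, apply the statement to the smaller diagram, and check that reattaching the shell preserves either the ladder structure or the count of at least three features. This equality-case analysis is where the argument is hardest.
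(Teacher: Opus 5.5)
The paper gives no proof of its own here; it quotes McCammond--Wise, Theorem 9.4. A combinatorial Gauss--Bonnet argument is a natural framework, but your curvature bookkeeping is wrong exactly where it matters.

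\textbf{The curvature values.} With your angle assignment, take a $2$-cell in a disc diagram without cut vertices whose boundary is one boundary arc plus $i\ge 1$ internal arcs. It has $i+1$ corners at vertices of valence at least $3$, so its curvature is
\[
2\pi-(i+1)\pi/2=(3-i)\pi/2 .
\]
That gives $\pi$ for a $1$-shell, $\pi/2$ for a $2$-shell and $0$ for a $3$-shell, so each of your values is $\pi/2$ too small. This has real consequences:
\begin{itemize}
\item With your values, $2$-shells carry no positive curvature, yet they are among the features the theorem counts. The $2\times 2$ grid diagram over $\langle a,b\mid[a,b]\rangle$ has exactly four $2$-shells and no other positive curvature.
\item A ladder would have total curvature $\pi$ instead of $2\pi$. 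So your ``equality case'' with two positive cells can never arise. Your count would force at least four $1$-shells in every such disc, i.e.\ ladders could not exist.
\end{itemize}
You also assert without proof that cells with several boundary arcs are not positively curved. Such a cell has curvature $2\pi-(b+i)\pi/2$. You need the fact that, in a diagram without cut vertices, no two boundary arcs of a cell are consecutive, since their common vertex would have valence $2$. Hence $i\ge b\ge 2$. Finally, your uses of $\Ccond(4)$ and $\Tcond(4)$ (internal arcs are pieces, no interior vertices of valence $3$) hold only for reduced diagrams, which should be assumed.

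\textbf{The missing core argument.} The real content of the theorem is deferred. With corrected numbers, Gauss--Bonnet shows that a disc diagram without cut vertices and with more than one cell has at least two positive cells. If it has exactly two, both are $1$-shells and every other cell and vertex is flat.

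The rigidity step then does not need your induction. That induction would have to track how regluing a shell splits boundary arcs of its neighbours, in a smaller diagram that may acquire cut vertices. A direct argument works instead:
\begin{itemize}
\item Flatness forces the endpoints of a $1$-shell's internal arc to be boundary vertices of valence exactly $3$.
\item So the cell across that arc has boundary edges at both ends of it.
\item That cell is therefore either a $1$-shell, or, being flat with $i\ge b\ge 2$, has exactly four arcs in the order internal, boundary, internal, boundary.
\item Crossing the opposite internal arc and repeating gives a chain that cannot revisit a cell and must end at the other $1$-shell.
\item The valence constraints show nothing else is attached, so $D$ is a ladder.
\end{itemize}

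\textbf{The cut-vertex reduction.} This also needs an explicit lemma: gluing a block to the rest of $X$ at one point destroys at most one of its shells or spurs. Only the cell whose boundary arc contains that point in its interior can lose shell status. With the disc case, this shows that every leaf block keeps a feature. It also shows that if the block tree is a path and only two features survive, then every block is a ladder, single cell or edge glued at its ends, so $X$ is a ladder. Your first paragraph only gestures at this.
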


\subsection{Dehn-reduced words and geodesics in surface groups}

Note that an arbitrary word $w$ of $(Y_g^\pm)^*$ is not necessarily reduced to a geodesic word by Dehn's algorithm. The rightmost path in \cref{fig:dehn_reduced} represents a Dehn-reduced word which is not geodesic. (Complete rewriting systems for surface groups are known: see for instance \cite{hermiler94} or \cite{gn97}.) We shall show, using small cancellation properties of our presentation for $\Sigma_g$, that Dehn-reduced words asynchronously fellow-travel with geodesics. This gives us a tool to find weakly-$L$-\good\ languages which describe submonoids of surface groups, and hence a class of $L$-recognisable submonoids.

We need the following lemma.
 \begin{lemma}\label{lemma:dehn_red_no_shells}
Let $X$ be a reduced \mcomment{AD:added reduced} Van Kampen diagram for the surface group $\Sigma_g$. Suppose that a boundary cycle of $X$ starting at a given vertex is labelled by a concatenation of Dehn-reduced words $w_1,\ldots, w_n$. Suppose also that $X$ has an $i$-shell $Y$ with $i\leq 2g-1$. Then the maximal arc which is the intersection of $Y$ with the boundary of $X$, contains edges from two consecutive words $w_jw_{j+1}$ or $w_{n}w_1$. 
 \end{lemma}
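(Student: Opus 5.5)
We argue by contradiction, using only the combinatorics of the $4g$-gon tessellation. Let $Y$ be an $i$-shell with $i\le 2g-1$, and let $\beta$ be its unique maximal boundary arc, i.e.\ the intersection of $\partial Y$ with $\partial X$. Suppose $\beta$ does not contain edges from two consecutive words; since $\beta$ is a connected subpath of the boundary cycle, this means all its edges lie inside a single factor $w_j$. We will show that $w_j$ then fails to be Dehn-reduced.

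\textbf{Step 1: $|\beta|\ge 2g+1$.} Every $2$-cell of $X$ is a $4g$-gon whose boundary word lies in $R_g$. Since $X$ is reduced, every maximal internal arc spells a piece. For the standard presentation the pieces are single letters, so each of the $i$ internal maximal arcs of $\partial Y$ has length exactly $1$. The boundary of $Y$ decomposes into $\beta$ together with these $i$ internal arcs. Hence
\[
|\beta| = 4g - i \ge 4g-(2g-1) = 2g+1 > 2g .
\]
Here we must check that $\partial Y$ really is the union of $\beta$ and the $i$ internal arcs, with no further boundary pieces. This holds because an $i$-shell has exactly one maximal boundary arc, and any edge of $\partial Y$ not on $\partial X$ is interior.

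\textbf{Step 2: $\beta$ spells a subword of a relator.} Reading $\beta$ along the boundary cycle of $X$ gives a subword $u$ of $w_j$, of length $>2g$. The same path is read along $\partial Y$, with one of its two orientations, so $u$ is a subword of a cyclic permutation of $r_g^{\pm1}$. We can therefore write $uv^{-1}\in R_g$, where $v$ is the label of the complementary path around $Y$, and $|v| = 4g-|u| < |u|$.

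\textbf{Step 3: contradiction.} By Step 2, $u\to v$ is a rule of type (2) in Dehn's rewriting system. This rule applies to the subword $u$ of $w_j$, contradicting the assumption that $w_j$ is Dehn-reduced. Hence $\beta$ cannot lie within a single $w_j$, so it must contain edges from two consecutive factors $w_jw_{j+1}$ or, cyclically, $w_nw_1$.

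\textbf{Main obstacle.} The delicate point is the geometry of Step 2: the arc $\beta$ must read a genuine subword $u$ of $w_j$ that is simultaneously a contiguous subword of the relator around $Y$. Two situations could break this. First, the boundary of $X$ might pass through $Y$ non-injectively, for instance at cut vertices, or $\beta$ might pass through the base point of the boundary cycle. Second, the vertex at which the boundary cycle is based might lie in the interior of $\beta$. In the latter case $\beta$ already splits across $w_n$ and $w_1$, and the conclusion holds anyway. For the former, the plan is to use that a maximal boundary arc is a simple path whose internal vertices have degree $2$, so it is traversed once and contiguously by the boundary cycle.
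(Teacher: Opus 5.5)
Your proposal is correct and takes essentially the same approach as the paper: single-edge internal arcs force the shell to have at least $2g+1$ boundary edges, these spell a subword of an element of $R_g$, and such a subword cannot lie inside a single Dehn-reduced $w_j$. Your extra check, that a maximal boundary arc (whose internal vertices have degree $2$) is traversed once and contiguously by the boundary cycle, is a detail the paper leaves implicit, and your argument for it is sound.
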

 \begin{proof}
 Since the only relator for $\Sigma_g$ has length $4g$, it follows that the $2$-cell $Y$ has $4g$ edges.
Since all internal edges are maximal arcs, at most $2g-1$ of these edges are internal so we see that $Y$ has at least $2g+1$ boundary edges, and that the labels for these edges spell a subword of an element of $R_g$. 	But since each $w_i$ is Dehn-reduced, none can contain a subword of an element of $R_g$ of length $2g+1$, and so the boundary edges of $Y$ must contain labels from two consecutive words $w_iw_{i+1}$. 
 \end{proof}

We can now obtain the following result.
\begin{lemma}\label{lemma:Dehn_geodesic_fellow_travel}
	Let $u$ be a Dehn-reduced word in $(Y_g^\pm)^*$,  and let $v$ be a geodesic representing the same element of $\Sigma_g$. (So $v$ is also Dehn-reduced.) A  reduced \mcomment{AD:added reduced} Van Kampen diagram for the word $uv^{-1}$ must be a single vertex, a single $2$-cell, or a ladder. In particular, $u$ asynchronously $(g+1)$-fellow travels with $v$ (with upper bound of $2g$ on asynchronicity). \mcomment{AD: changed to  $g+1$-fellow travels.} 
\end{lemma}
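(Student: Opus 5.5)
We apply the McCammond--Wise trichotomy (Theorem 9.4 above) to a reduced Van Kampen diagram $X$ for $uv^{-1}$, using the fact that the standard presentation of $\Sigma_g$ is $\Ccond(4)$--$\Tcond(4)$. The goal is to exclude alternative (1), namely that $X$ has at least three spurs or $i$-shells with $i\le 2$. The boundary word of $X$, read from the base vertex, is the concatenation of two Dehn-reduced words $w_1=u$ and $w_2=v^{-1}$. The word $v^{-1}$ is Dehn-reduced because $v$ is geodesic and Dehn-reduction is preserved under inversion.

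First I handle spurs. A spur is a $1$-cell hanging off the diagram, so its free end is a vertex of degree $1$ on the boundary. The boundary cycle traverses the spur as a backtrack $ss^{-1}$, which has to occur at the degree-$1$ vertex. That backtrack is a subword of the cyclic word $uv^{-1}$. Since $u$ and $v^{-1}$ are freely reduced, the backtrack must straddle one of the two junction points: the $u\,|\,v^{-1}$ junction or the base point $v^{-1}\,|\,u$. Hence at most two spurs arise, one at each junction.

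Next I handle shells. Since $g>1$, we have $2\le 2g-1$, so \cref{lemma:dehn_red_no_shells} applies to every $i$-shell with $i\le2$. Its boundary arc must contain edges from both $u$ and $v^{-1}$, so it straddles a junction. The main obstacle is to show that each junction supports at most one spur-or-shell feature; this gives at most two in total, which contradicts (1). My approach is this: two distinct shells, or a shell and a spur, cannot both have maximal boundary arcs containing the same junction vertex in its interior. Maximal boundary arcs of distinct $2$-cells meet in at most endpoints, and a spur makes the junction vertex of degree $1$. I also need to confirm that a shell's boundary arc genuinely contains the junction vertex as an interior point rather than merely touching it at an endpoint. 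This follows because the arc has edges from both words.

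That establishes the first claim: $X$ is a single vertex, a single $2$-cell, or a ladder.

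For the fellow-travelling claim, in the single-vertex case $u=v$ and nothing needs to be done. In the single $2$-cell case, $u$ and $v$ are the two complementary arcs of one $4g$-gon. In the ladder case, $u$ and $v$ are the two rails of the ladder, joined at each cell. Each $2$-cell has boundary length $4g$ and contributes consecutive subpaths $p_j$ of $u$ and $q_j$ of $v$, with $|p_j|+|q_j|\le 4g$; spur edges are shared by both rails. Because $u$ and $v$ are Dehn-reduced, $|p_j|,|q_j|\le 2g$. Also, $|q_j|\le|p_j|$ since $v$ is geodesic, and $|q_j|\ge 1$ since rails share edges only at spur rungs.

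I define $f$ on each cell block by mapping $p_j$ onto $q_j$ monotonically, with $f$ advancing by at most $|q_j|\le 2g$ in a step; this gives the asynchronicity bound $2g$. Any point of $p_j$ lies on a $4g$-cycle that contains the corresponding point of $q_j$. Matching proportionally, or simply sending the first half of $p_j$ to the start of $q_j$ and the rest to its end, gives a distance of at most $g+1$. This uses $|p_j|\le 2g$ together with the path around the cell through the shorter side. Checking the constant $g+1$ exactly is routine arithmetic on the $4g$-gon.
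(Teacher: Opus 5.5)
Your proof is correct, but the structural half takes a different route from the paper.

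The paper does not apply the trichotomy of Theorem 9.4 directly. It quotes Wise's Corollary 2.8, a version adapted to a boundary cycle of the form $uv^{-1}$: unless $X$ is a vertex, a $2$-cell or a ladder, one of $u$ or $v$ contains all the boundary edges of a $1$- or $2$-shell. Lemma~\ref{lemma:dehn_red_no_shells} then excludes that alternative at once.

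You instead use the trichotomy as the paper states it and supply the counting yourself. Your counting runs as follows:
\begin{itemize}
\item A spur tip, or an interior vertex of the boundary arc of an $i$-shell with $i\le 2$, is visited exactly once by the boundary cycle.
\item Free reduction of $u$ and $v^{-1}$ (for spurs), and Lemma~\ref{lemma:dehn_red_no_shells} with $2\le 2g-1$ (for shells), force each such feature to contain one of the two junction points of $u\,|\,v^{-1}$.
\item Two features cannot share a junction, because a boundary edge lies on at most one $2$-cell and a spur tip has degree $1$.
\end{itemize}
This costs a short extra argument. In return, the proof rests only on the result the paper actually states, and it treats spurs and $0$-shells explicitly.

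The fellow-travelling half is the paper's cell-by-cell argument made explicit. Send the first half of $p_j$ to the start of $q_j$ and the rest to its end. Since $|p_j|\le 2g$ and internal arcs have length at most $1$, this gives distance at most $g+1$, with jumps of at most $|q_j|\le 2g$.

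Two side remarks in that part should be dropped, although nothing depends on them. First, $|q_j|\le|p_j|$ does not follow from $v$ being geodesic. The paths $p_j$ and $q_j$ are joined at their ends by internal edges, so you only get $|q_j|\le|p_j|+2$. Second, the lower bound on $|q_j|$ comes from $|p_j|+|q_j|\ge 4g-2$ together with $|p_j|\le 2g$, not from the rung structure.
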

\mcomment{AD: The proof appears to show that $u$ asynchronously $g+1$-fellow travels with linear bound on the asynchronicity of $2g$ (If I counted correctly.)
  Doesn't that give $L$-\good\ with constants $g+1$ and $2g$?} 
\begin{proof}
We see from \cite[Corollary 2.8]{wise04} that if $X$ is a diagram for a $\Ccond(4)$--$\Tcond(4)$ presentation, and
if $uv^{-1}$ is the boundary cycle of $X$, then either $u$ or $v$ contains all of the boundary edges of a $1$-shell or a $2$-shell of $X$, or else $X$ is a single vertex, a $2$-cell, or a ladder.
Let $X$ be a Van Kampen diagram for $uv^{-1}$, and let $u_0$ and $v_0$ be the vertices corresponding to the start of the words $u$ and $v^{-1}$ respectively. 
By Lemma \ref{lemma:dehn_red_no_shells} any $1$- or $2$-shell of $X$ must include edges from both $u$ and $v^{-1}$, and so neither $u$ nor $v^{-1}$ contains all of the boundary edges for for any $1$- or $2$-shell. So $X$ is a vertex, a $2$-cell, or a ladder.	
	
\mcomment{AD: expanded this paragraph.}
If $X$ is a vertex then clearly $u$ and $v$ fellow-travel. If $X$ is a
$2$-cell then, as $u$ and $v$ are Dehn-reduced, both have length $2g$ and
so the result holds. \mcomment{i.e. $L$-proximacy, with constants $g$ and $2g$}
Assume then that $X$ is a ladder. In this case its endpoints are $u_0$ and $v_0$, and the rails are $u$ and $v$. Each cell $c_i$ forming the ladder must intersect both rails, and this observation, together with the fact that 
cells along the Dehn-reduced word $u$ intersect the boundary of $X$ in at most  $2g$ edges,
and internal arcs have length at most $1$,  shows that every vertex of $u$ is at distance less than $g+1$ from a vertex of $v$ and that these vertices
of $v$ are separated by at most $2g-2$ edges of $v$. \mcomment{$L$-proximate with
  constants $g+1$ and $2g-2$.}
\end{proof}

\subsection{Constructing $L$-recognisable submonoids of surface groups}

We \mcomment{AD: added ``constructively'' to decidability statements}start with an immediate corollary to Lemma~\ref{lemma:Dehn_geodesic_fellow_travel}.
\mcomment{LAM Maybe one of these surrounding results could be a theorem? They are not super strong but they provide the main source of examples, and JB pointed out we barely have theorems. I am not sure which ones could be.}
\begin{lemma}\label{lemma:DehnReduced}
	A Dehn-reduced language $Q$ over the generators of $\Sigma_g$ is weakly-$L$-\good\ in the surface group $\Sigma_g$. In particular, if $T$ is a generating set for a submonoid $M\subset\Sigma_g$, and if every word in $T^*$ is Dehn-reduced, then $M$ is $L$-recognisable, and $M$ has constructively decidable membership problem.
\end{lemma}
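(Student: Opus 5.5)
The plan is to read the lemma off directly from \cref{lemma:Dehn_geodesic_fellow_travel}, with the roles of $u$ and $v$ chosen to match the direction required in \cref{def:WeaklyGoodQ}.

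\textbf{First claim.} Let $w\in L\cap\mu^{-1}(\mu(Q))$. Then $w$ is a geodesic over $Y_g$, and there is some $w'\in Q$ with $\mu(w')=\mu(w)$. Since every word of $Q$ is Dehn-reduced, we may apply \cref{lemma:Dehn_geodesic_fellow_travel} with $u=w'$ and $v=w$. This shows that $w$ and $w'$ asynchronously $(g+1)$-fellow travel.

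The one point needing care is the orientation of the reparametrisation. \cref{def:WeaklyGoodQ} asks for a non-decreasing map $f\colon[0,|w|]\to[0,|w'|]$, indexed by the geodesic $w$. I would take this from the ladder structure in the proof of \cref{lemma:Dehn_geodesic_fellow_travel}.
\begin{itemize}
\item The $2$-cells and the $1$-cells shared by both rails are linearly ordered along the ladder.
\item Each vertex $w_i$ of the rail $v=w$ lies on some cell $c_j$ of the ladder.
\item Define $f(i)$ to be the index of a vertex of $w'$ on the same cell $c_j$, choosing the first such vertex. This choice is monotone in $j$, so $f$ is non-decreasing.
\item Since both rails meet each cell, and cells have perimeter $4g$, the distance $d(w_i,w'_{f(i)})$ is at most about $2g$. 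This constant is all that weak proximacy needs; I would not insist on $g+1$.
\item The single-vertex and single-$2$-cell cases are trivial.
\end{itemize}
So $Q$ is weakly-$L$-proximate with a constant depending only on $g$.

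\textbf{Second claim.} Take $Q=T^*$, which is regular, with $T$ finite (implicitly, as required by \cref{def:flower}). By the first part, $T^*$ is weakly-$L$-proximate. \cref{proposition:SubmonoidGoodRecognisable} then gives that $M$ is $L$-recognisable. Alternatively, combine \cref{thm:weaklyLgood_implies_Lgood_submonoids} with \cref{thm:algorithm_terminates}.

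For constructive decidability, note that $\Sigma_g$ with $L$ the geodesics is automatic, since $g>1$ and the group is hyperbolic. So \cref{thm:subpgalg} applies to $\A_0=\Flower(T)$: it effectively outputs $\A_n$ whose language contains $L\cap\mu^{-1}(M)$. The corollary following \cref{thm:subpgalg} then yields an effectively computed automaton and constructively decidable membership.

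\textbf{Main obstacle.} I expect the main work to be checking that the ladder gives a genuinely \emph{monotone} correspondence from $w$ to $w'$, rather than only a bounded Hausdorff distance between the rails. That is the step where I would use the linear ordering of the ladder cells explicitly.
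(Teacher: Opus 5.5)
Your proposal is correct and follows the paper. The paper states this lemma as an immediate corollary of \cref{lemma:Dehn_geodesic_fellow_travel}, applied with $u$ the word of $Q$ and $v$ the geodesic; the submonoid statement then comes from \cref{proposition:SubmonoidGoodRecognisable} together with \cref{thm:subpgalg} and its corollary. Your care over the direction of the reparametrisation is warranted, since the proof of \cref{lemma:Dehn_geodesic_fellow_travel} maps vertices of the Dehn-reduced word to the geodesic, whereas \cref{def:WeaklyGoodQ} needs $f$ indexed by the geodesic; the one slip in your ladder construction is that taking the first vertex of $w'$ on the last cell does not give $f(|w|)=|w'|$, so set that value by hand, which keeps $f$ non-decreasing.
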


Before we give examples of submonoids to which this result applies, we note an immediate way to strengthen it.
\begin{definition}
  Let $G$ be a group with presentation $\langle S|R\rangle$, and let $u, v$ be words in $(S^\pm)^*$. We say that $u$ yields $v$ in a \emph{simultaneous Dehn-reduction}, $u\xrightarrow[\Dsim]{} v$, if we can write $u$ and $v$ as a concatenation of subwords $u = u_1\ldots u_t$ and $v = v_1\ldots v_t$ such that $v_i$ can be obtained from $u_i$ by applying a Dehn rewriting step. Given a sequence of
  words $u=d_0, \ldots ,d_N=v$ of length $N\ge 0$, where $d_{i-1} \xrightarrow[\Dsim]{} d_{i}$, for $1\le i\le N$, we say that $u$ is $N$ \emph{simultaneous Dehn rewriting steps} from $v$. 
\end{definition}

\begin{thm}\label{thm:Dehn_reduced_Q_constructively_decidable}
  Let $N$ be a non-negative integer and let \mcomment{LAM this raises a natural question to me that maybe we should say something about: do we have a f.g. submonoid in a surface group that is not $k$ steps of being Dehn reduced for some $k$?\\[1em]
    AD: do we know of a f.g. submonoid of a surface group which is not weakly-L-proximate? \\[1em]
    Do all words of an  $L$-proximate language  lie within $N$ simultaneous Dehn steps of a
    Dehn-reduced word, for some fixed $N$? Yes, is my guess, based on the fact that surface groups satisfy a linear isomperimetric inequality.}
  $Q$ be a language over $Y_g$ all of whose words are at most $N$ simultaneous Dehn rewriting steps away from a Dehn-reduced word. Then $Q$ is weakly-$L$-\good\ with constant $k=(2N+1)g+1$. If $Q= T^*$, then the submonoid $M=\mupi(T^*)$ is $L$-recognisable and has constructively
  decidable membership problem. \mcomment{JB: changed constant from $(g+1)(N+1)$ to $(2N+1)g +1$.}
\end{thm}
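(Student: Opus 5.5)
Given a geodesic $w\in L$ with $\mu(w)\in\mu(Q)$, I will choose $w'\in Q$ with $\mu(w')=\mu(w)$, and let $u$ be a Dehn-reduced word with $w'=d_0\xrightarrow[\Dsim]{} d_1\xrightarrow[\Dsim]{}\cdots\xrightarrow[\Dsim]{} d_N=u$. Since $\mu(u)=\mu(w)$ and $w$ is geodesic, \cref{lemma:Dehn_geodesic_fellow_travel} shows that $w$ and $u$ asynchronously $(g+1)$-fellow travel via a non-decreasing map. The plan is then to transport this fellow travelling back along the chain $d_N,d_{N-1},\ldots,d_0$, losing a bounded amount at each step, and compose the reparametrisations.

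The key local estimate concerns a single simultaneous Dehn step $d_{i-1}=u_1\cdots u_t\to v_1\cdots v_t=d_i$, where each $v_j$ is obtained from $u_j$ by one Dehn rewriting step. Such a step either deletes $ss^{-1}$ or replaces a subword $x$ of a relator by its complement $y$ with $|x|>|y|$ and $|x|+|y|=4g$. In either case the path labelled $u_j$ and the path labelled $v_j$, read from the same starting vertex, stay within distance $2g$ of each other. More precisely, every vertex on the rewritten portion of $u_j$ lies on the boundary of a single $4g$-gon (or on a backtrack), and so it is within $2g$ of the start of that portion. Since $d_{i-1}$ and $d_i$ agree in $G$ at every breakpoint between consecutive factors, I can define a non-decreasing map $f_i\colon[0,|d_{i-1}|]\to[0,|d_i|]$ (sending vertices of $u_j$ into $v_j$, and the rewritten region to its start) with $d((d_{i-1})_m,(d_i)_{f_i(m)})\le 2g$, together with a non-decreasing map in the reverse direction satisfying the same bound. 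With more care, choosing the nearest point on the shorter side should give distance at most $g$ at each step, which is what the constant $(2N+1)g+1$ suggests. I will aim for the bound $2g$ per step, i.e.\ $g$ on each side of the nearest endpoint, and count accordingly.

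Next I compose. Let $F$ be the reverse-direction map from $w$ to $u$ given by \cref{lemma:Dehn_geodesic_fellow_travel} (distance $\le g+1$), and let $h_i$ be the reverse maps from $d_i$ to $d_{i-1}$ (distance $\le 2g$). The composite $h_1\circ\cdots\circ h_N\circ F$ is non-decreasing, sends endpoints to endpoints, and by the triangle inequality gives $d(w_m,w'_{\phi(m)})\le (g+1)+2Ng=(2N+1)g+1$. This establishes weak $L$-proximacy of $Q$ with the stated constant, in the sense of \cref{def:WeaklyGoodQ}. The main obstacle is making the per-step maps genuinely non-decreasing and correct at the factor boundaries. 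The natural choice is to map a vertex inside a rewritten subword $x$ to an endpoint of $y$ or to a vertex of $y$, using the geometry of the $4g$-gon, while the endpoints of factors are fixed. Within one relator cell, any vertex of $x$ is within $2g$ of some vertex of $y$ by going around the other way, and a monotone assignment exists because both arcs run between the same two endpoints.

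Finally, if $Q=T^*$, then $M=\mu(T^*)$ is weakly-$L$-proximate with respect to $T^*$. \cref{thm:weaklyLgood_implies_Lgood_submonoids} upgrades this to $L$-proximacy, and \cref{thm:submonoid_corollary} with \cref{thm:algorithm_terminates} gives $L$-recognisability. Since $\Sigma_g$ is automatic with geodesic language $L$, \cref{thm:subpgalg} and its corollary yield an effectively computed automaton, and hence constructively decidable membership.
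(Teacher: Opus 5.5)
Your proposal is correct and follows the same route as the paper: a bound of $2g$ for each simultaneous Dehn step, composed over the at most $N$ steps, plus the $(g+1)$ bound from \cref{lemma:Dehn_geodesic_fellow_travel} between the Dehn-reduced word and a geodesic, giving $(2N+1)g+1$, with the submonoid conclusions then read off from the earlier recognisability and halting results. Your write-up is more careful than the paper's proof in two places: it builds the reparametrisation in the direction \cref{def:WeaklyGoodQ} actually requires (from an arbitrary geodesic into a word of $Q$), and it checks monotonicity at the factor boundaries.
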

\begin{proof}
We observe that if $u$ yields $v$ in a simultaneous Dehn-reduction, then $u$ asychronously $2g$-fellow \mcomment{JB: I think $2g$ is right here, rather than $g+1$ as previously stated.} travels with $v$. Let $Q'$ be the language of Dehn-reduced words obtainable from words in $Q$ by at most $N$ simultaneous Dehn-reductions. Then for every word $u\in Q$ there is $v\in Q'$ such that $u$ asynchronously $2Ng$-fellow travels with $v$. Since $v$ is Dehn-reduced, there is a geodesic $w$ representing the same element of $G$ such that $v$ asynchronously $(g+1)$-fellow travels with $v$ by \cref{lemma:Dehn_geodesic_fellow_travel}. So $u$ asynchronously $k$-fellow travels with $w$, where $k=(2N+1)g+1$, and so $Q$ is weakly-$L$-\good\ with constant $k$.
\mcomment{JB: added proof. Do we state the connection between $L$-recognisability and decidable membership problem somewhere? I can't find it.}

If $Q$ is a submonoid $T^*$ of $(S^\pm)^*$ then $Q$, is $L$-recognisable by \cref{proposition:SubmonoidGoodRecognisable}. 
\end{proof}


Now we see how to build a family of Dehn-reduced submonoids. \mcomment{JB: family of submonoids}
\begin{prop}\label{prop:dehn_reduced_construction}
	Let $T\subset (S^\pm)^*$ be such that
	\begin{enumerate}
		\item for all $t\in T$, $t$ is Dehn-reduced and has length at least $g$;
		\item for all $t\in T$, neither its length $g$ prefix nor its length $g$ suffix are a prefix of any word of $R_g$;
                \item If a letter is an initial (final) letter of a word in $T$ then its inverse is not a final (initial) letter of a word in $T$.\mcomment{AD: changed the last item.}
	\end{enumerate}
	Then the language $T^*$ is Dehn-reduced. Therefore $T^*$ is weakly-$L$-\good, and the submonoid it generates is $L$-recognisable.
\end{prop}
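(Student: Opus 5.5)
We must show that every word $w=t_1t_2\cdots t_m$ with $t_i\in T$ is Dehn-reduced. The rest then follows: \cref{lemma:DehnReduced} makes a Dehn-reduced language weakly-$L$-\good, and \cref{proposition:SubmonoidGoodRecognisable} then gives $L$-recognisability of $\mu(T^*)$.

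A word fails to be Dehn-reduced in two ways. It may contain a subword $ss^{-1}$. Or it may contain a subword $u$ of length at least $2g+1$ that is a subword of some element of $R_g$; this is the condition for a rule $u\to v$ with $uv^{-1}\in R_g$ and $|u|>|v|$. Since each element of $R_g$ is cyclically reduced of length $4g$, such a $u$ is a prefix of some cyclic permutation, and its length ranges from $2g+1$ to $4g$. We treat the two cases in turn.

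\textbf{Free cancellation.} Each $t_i$ is Dehn-reduced, so it is freely reduced. Any cancelling pair $ss^{-1}$ in $w$ must therefore straddle a junction $t_it_{i+1}$. Then $s$ is a final letter of a word of $T$ and $s^{-1}$ is an initial letter of a word of $T$. Hypothesis (3) excludes exactly this.

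\textbf{Long relator subwords.} Suppose $w$ contains a subword $u$ of length at least $2g+1$ which is a subword of a word in $R_g$. Because each $t_i$ is Dehn-reduced, $u$ is not contained in a single $t_i$, so it crosses at least one junction. Consider the first junction it crosses, between $t_i$ and $t_{i+1}$, and split $u=u'u''$ there, with $u'$ a suffix of $t_i$ and $u''$ a prefix of $t_{i+1}t_{i+2}\cdots$. Since $|u|\ge 2g+1$, at least one of $|u'|\ge g$ or $|u''|\ge g+1$ holds.

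If $|u''|\ge g$, we need the length-$g$ prefix of $t_{i+1}$ to be a subword of a word of $R_g$. This holds because $|t_{i+1}|\ge g$ by hypothesis (1), so that prefix is the beginning of $u''$. Every subword of a relator word of length at most $4g$ is a prefix of some element of $R_g$, because $R_g$ is closed under cyclic permutation. So the length-$g$ prefix of $t_{i+1}$ is a prefix of an element of $R_g$, contradicting hypothesis (2).

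Otherwise $|u'|\ge g+1$. Then the length-$g$ suffix of $t_i$ is a subword of $u$, hence a prefix of some element of $R_g$, again contradicting (2). Here I read (2) as "not a subword of any word of $R_g$". Since $R_g$ is closed under cyclic permutations and inverses, "prefix of some element of $R_g$" and "subword of a word in $R_g$" coincide, so this reading is harmless.

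\textbf{Expected obstacle.} The one point needing care is when $u$ crosses several junctions, that is, when some intermediate $t_j$ lies entirely inside $u$. The argument above avoids this by only ever using the first junction. At that junction we always obtain a length-$g$ piece lying wholly in $t_i$ or wholly in $t_{i+1}$, and the latter is guaranteed by $|t_{i+1}|\ge g$. Once both cases yield contradictions, $T^*$ is Dehn-reduced and the proposition follows.
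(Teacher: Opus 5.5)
Your argument is correct. The paper states this proposition without a written proof, and your junction analysis is the verification it leaves implicit. Free cancellation can only occur across a junction $t_it_{i+1}$, and (3) excludes it. A subword $u$ of length at least $2g+1$ of an element of $R_g$ must cross some junction. Because every $t_j$ has length at least $g$, $u$ then contains either the whole length-$g$ suffix of $t_i$ or the whole length-$g$ prefix of $t_{i+1}$, which contradicts (2). The last two claims then follow from \cref{lemma:DehnReduced} and \cref{proposition:SubmonoidGoodRecognisable}, as you say.

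Two small remarks. First, restricting to the \emph{first} junction is harmless but unnecessary: any junction that $u$ crosses works for the same reason, since $|t_j|\ge g$ for every $j$. So the ``expected obstacle'' of several junctions never arises. Second, the final sentence tacitly needs $T$ to be finite, so that $T^*$ is regular and \cref{proposition:SubmonoidGoodRecognisable} applies. Both your proof and the statement leave this implicit.
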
 
This provides a simple way of constructing Dehn-reduced submonoids of a surface group.
\mcomment{AD: removed figure}
\begin{example}\label{ex:dehn_reduced_monoid}
	Let $\Sigma_2=\langle Y_2 \;|\; r_2\rangle$ be the standard presentation for a genus 2 surface group, with $Y_g=\{a, b, c, d\}$ and $r_2= aba^{-1}b^{-1}cdc^{-1}d^{-1}$. Let $T=\{w_1, w_2, w_3, w_4\}$, where
	\begin{alignat*}{3}
		w_1 = &\; a^{-1}c       && u_1   &&cc,\\
		w_2 = &\; bc       && u_2 \;&&ac^{-1}, \\
		w_3 = &\; bd^{-1}  && u_3   && ad, \\
		w_4 = &\; b^{-1}d\;&& u_4   && c^{-1}d,		
	\end{alignat*}  
\mcomment{LAM Ive aligned these words to visualise the separation between the first $g$ letters, the last $g$ letters and the infix. If this is confusing, happy to have this desaligned.} and the words $u_1, \ldots u_4$ can be arbitrarily chosen as long as the resulting words $w_1, \ldots, w_4$ remain Dehn-reduced. Then, the set $T$ satisfies the hypothesis of \cref{prop:dehn_reduced_construction} and $T^*$ is Dehn-reduced. For a concrete example it suffices to take, among many possibilities, 
\begin{alignat*}{3}
	u_1 = &\; d, && \text{ so that } w_1 &&= a^{-1}cdc^{2},\\
	u_2 = &\; d^{-1}, && \text{ so that } w_2 &&= bcd^{-1}ac^{-1}, \\
	u_3 = &\; c^{-1}, && \text{ so that } w_3 &&= bd^{-1}c^{-1}ad, \\
	u_4 = &\; cd^{-1}, && \text{ so that } w_4 &&= b^{-1}d   cd^{-1}  c^{-1}d.		
\end{alignat*}
\mcomment{AD: To ensure that neither Theorem 4.1 or Theorem 4.5 of \cite{fg25} applies $w_1$ has been changed from $ac u_1cc$ to $a^{-1}c u_1 cc$.}
\end{example}
In recent work by Foniqi and Gray \cite{fg25}, the authors provide some results regarding the membership problem of certain submonoids of the surface group. \mcomment{LAM I dont understand well their graded case in section 6, it would be helpful if someone could take a look at it and guarantee that their section 6 does not cover our results} More specifically, they guarantee decidability of the submonoid membership problem in a surface for Magnus submonoids \cite[Theorem 4.4]{fg25}, for submonoids where the $y$-exponent of all the words in $T$ is controlled for some $y\in Y_g$ \cite[Theorem 4.1]{fg25}, and for certain graded submonoids \cite[\S 6]{fg25}. We note that the submonoids of our examples above are not covered
by \cite[Theorem 4.1]{fg25} or  \cite[Theorem 4.4]{fg25} and not obviously within the scope of results of \cite[\S 6.3]{fg25}.

\mcomment{AD: are there other properties that can be derived from the
  folding of these monoids?}

\blockcomment{

\section{Application to rational subsets of right-angled Artin groups} 

In this section we shall show that in the case of rational subsets of right-angled Artin groups, given an appropriate rational structure
$L$ we may construct an algorithm from the procedure described in \cref{subsec:folding}. 

~\\[1em] Definition of raag. Description of geodesics in raags. $\supp(w)$ for an element of $(S^\pm)^*$ and $\lk(s)$ for $s\in S^\pm$.
If $w$ is a word in $(S^\pm)^*$ then we say that a word $w'$ is obtained from $w$ by a \emph{shuffle reduction}
if $w=w_0aw_1a^{-1}w_2$ and $w'=w_0w_1w_2$, where $a\in S^\pm$, $w_i\in (S^\pm)^*$, for $i=0,2$ and  $w_1\in (\lk(a))^*$.
In this case we write $w\xrightarrow[\SR]{}w'$. 
From \cite{somethingorother} a word $w\in (S^\pm)^*$
is a geodesic representative of $\mupi(w)$ if and only if its length cannot be reduced by a shuffle reduction.
A shuffle reduction, with the description above,  is said to be \emph{minimal} if $w_1$ is geodesic. As $w_1$ may be the empty word
 a free reduction is a special case of a minimal shuffle reduction. If 
$w\in (S^\pm)^*$ then a geodesic representative $w'$ of $\mupi(w)$ may be obtained by performing a finite sequence of minimal
shuffle reductions to $w$; that is,  there is a sequence $w=w_0, \ldots, w_n=w'$, such that
$w_i\xrightarrow[\SR]{}w_{i+1}$ is a minimal shuffle reduction, for $0\le i\le n-1$ and $w'$ is geodesic (see \cite{thesameagain}).
Moreover, the number of (minimal) shuffle reductions required to rewrite $w$ to a geodesic depends only on $w$ and we denote
this number by $\SR(w)$.  A word $w=aw_1a^{-1}$ where, as above, $a\in S^\pm$, $w_1\in (\lk(a))^*$ and $w_1$ is geodesic is called a
\emph{minimal shuffle reduction word}, and we call its expression as $awa^{-1}$ its \emph{standard factorisation}.\\[1em]

Throughout this section let $G$ be a right-angled Artin group with presentation $\langle S\,|\, R\rangle$,  where $R$ 
is closed under taking inverses and cyclic permutation of words, let  $L$ be the regular language
consisting of all geodesics for $G$ and let $\A$ be
an automaton over $S^\pm$ such that the language $Q$ of $\A$ is $L$-\good\ over $G$. We do not assume that we know the
corresponding constants, $k$ and $c$, of $L$-{\goodness}. The procedure of \cref{subsec:folding} produces a sequence $\A_0=\A,\A_1,\A_2,\ldots$
of automata such that, for some $i\ge 0$, the set of all $L$-representatives of $K=\mu(Q)$ is contained in $L(A_i)$. 
To make an algorithm of this procedure requires a test to decide whether or not $\A_i$ has the latter property. Our test has two parts, which we call Test 1 and Test 2.
\mcomment{SR: Tidied up naming of the tests, here and below}
\begin{enumerate}
\item[\textbf{Test 1.}] Check that $L(\A_i)$ contains at least one $L$-representative of $g$, for all $g\in K$.
\item[\textbf{Test 2.}]
  If Test 1 is successful check, for each word $w$ in $L\cap L(\A_i)$, that every element $w'$ of $L$ such that
  $\mu(w')=\mu(w)$ belongs to $L(\A_i)$.
\end{enumerate}  
Clearly  $L(\A_i)$ contains all representatives of $K$ if and only if both parts of the test are successful.

To simplify notation assume $\A_i$ 
 is the (deterministic, trim) finite state automaton
$(S^\pm,V,E,V_F,v_0)$. As in \cref{sec:Notation}, we may regard $\A_i$ as a labelled, directed graph $(V,E)$ with labelling function $l$  and we
shall need some notation for such graphs.
We may refer to the edge $(u,s,v)\in E$ simply as
$(u,v)$ when we do not need to consider its label $s$.
For $u \in V$ define the set $\out(u)=\{v\in V\,|\, (u,v)\in E\}$. 
As mentioned on page \pageref{page:path} we may identify a path with its image
in $\A_i$, which we write as a sequence
$u_0,e_1,u_1,\ldots , e_n,u_n$, where $u_i\in V$, $e_i\in E$ and $e_i=(u_{i-1},u_i)$, and as before such a path has length $n$.
We shall sometimes define a path by writing its edge sequence, in the form $p=e_1e_2\cdots e_n$, where $p$ is  the path above. 
 A path with \emph{initial}
 vertex $u_0:=x$ and \emph{terminal} vertex $u_n:=y$ is called an $(x,y)$\emph{-path}. If $Y$ is a subset of $V$ then an $(x,y)$-path such
 that $y\in Y$ is called an $(x,Y)$-path. 
 In this terminology $L(\A_i)$ is the set of words $w$ such that
$w$ is the label of a $(v_0,V_F)$-path in $\A_i$. 

A path is called \emph{reduced} if its label is reduced as a word in $F(S)$ and  \emph{geodesic} if its label is geodesic with
respect to the presentation $\langle S\,|\,R\rangle$.  A path is called a \emph{minimal shuffle reduction} path 
if its label is a
minimal shuffle
reduction word.  Moreover, in this case if $l(p)$ is the minimal shuffle reduction word with standard factorisation $awa^{-1}$
then the factorisation of $p$ 
as $e_0p_1e_{n+1}$, where $n=|w|$, $e_i\in E$, $l(e_0)=a$, $l(e_{n+1})=a^{-1}$ and $l(p_1)=w$ is called the \emph{standard factorisation} of $p$.

\subsection*{Test 1}\label{subsec:part1}
\mcomment{SR: reworded}
 Note that we may assume that $i\ge 1$ so that $Q_i := L(\A_i)$ contains all reduced words obtained by free reduction from its elements. 
To simplify notation set $\B:=\A_i=(S^\pm,V,E,V_F,v_0)$
in the description of Test 1 that now follows.
\subsubsection*{Test 1.} \label{sec:test1}
Let $p$ be an acyclic minimal shuffle reduction $(u,v)$-path which has standard factorisation $e_0p_1e_{n+1}$, where $n\ge 0$,
$e_0=(u,a,u_0)$, $e_{n+1}=(u_n,a^{-1},v)$ and $p_1$ is a $(u_0,u_n)$-path with label $l(p_1)=w$.
We wish to check that, for some vertex $v'$,  $\B$ has a $(u,v')$-path $q_w$ with label $w$ and an edge $e'_{n+1}=(v',a,u_n)$.  If this is true then,
as $\B$ is the result of an application of Benois' theorem and 
the path $e'_{n+1}e_{n+1}$ has label
$aa^{-1}$, it follows that for each edge $(v,s,v'')$ of $\B$ there is a corresponding edge $(v',s,v'')$.  

We shall perform our verification
for all geodesic $(u_0,u_n)$-paths  with labels in $\lk(a)^*$. We note that the set
of words which can appear as labels of such paths is regular, being the set  of geodesic
elements accepted by the automaton $\X = (lk(a), V, E_a, \{u_n\}, u_0)$, where $E_a$ is the set of edges of $\B$ with labels in $\lk(a)$.  
For our test, we want to show that for every geodesic word $z$ of $L(\X)$ there is a $(u,U_a)$-path in $\B$ with label $z$, where 
$U_a$ is the set of vertices $\{x\in V\,|\, \exists (x,a,u_n)\in E\}$. 
But the set of geodesic $(u,U_a)$-paths is also regular, being the language of geodesic words accepted by 
the automaton $\Y= (\lk(a), V, E_a, U_a, u)$.
It will be enough, then, to check that $L\cap L(\X)$ is contained in $L\cap L(\Y)$.  There are standard methods for constructing the intersection
or regular langauges and 
for testing whether one regular subset of $(S^\pm)^*$ is contained in
another (ref Hopcroft \& Ullman), so this check can be performed.

Our test, then, proceeds as follows. For each $a \in S^\pm$ and all $u,v,u_0,u_n\in V$, if there exist edges $e_l=(u,a,u_0)$ and $e_r=(u_n,a^{-1},v)$
then 
 check that $L\cap L(\X) \subseteq L\cap L(\Y)$, where $\X$ and $\Y$ are as above. If the test is
passed for all $a$ and all such pairs of edges $e_l$ and $e_r$ then the verification is complete and $\B$ passes Test 1.

\ajd{Test 1 is too strong. The original idea was to test that all geodesics obtained from words in $L(\A_0)$ by sequences of minimal shuffle reductions
  belong to $\B$. For that the test should only be on  minimal shuffle reduction paths which are involved in such sequences of reductions. As it
  is it's applied to all minimal shuffle reduction paths.}

To prove that this test succeeds if and only if $\B$ contains at least one $L$-representative of every element of $K$ we need to demonstrate that (i) 
it fails if there is an element $g\in K$ such that $\mu^{-1}(g)\cap L(\B)$ contains no element of $L$ and  (ii)  it succeeds
if $\mu^{-1}(g)\cap L(\B)\cap L\neq \emptyset$, for all $g\in K$.

\ajd{got to here}

\subsection*{Test 2}\label{subsec:part2} Check there are no missing squares (after intersecting with $L$).

}

\bibliographystyle{amsplain} 
\bibliography{refs}

@incollection{bs21,
  author       = {Laurent Bartholdi and
                  Pedro V. Silva},
  editor       = {Jean{-}{\'{E}}ric Pin},
  title        = {Rational subsets of groups},
  booktitle    = {Handbook of Automata Theory},
  volume      = {2},
  pages        = {841--869},
  publisher    = {EMS Press, Z{\"{u}}rich,
                  Switzerland},
  year         = {2021},
  url          = {https://doi.org/10.4171/Automata-2/1},
  doi          = {10.4171/AUTOMATA-2/1},
  bibsource    = {dblp computer science bibliography, https://dblp.org}
}

@article{bkl22,
  title={Folding-like techniques for $\mathrm{CAT}(0)$ cube complexes}, 
  volume={173}, 
  DOI={10.1017/S0305004121000645}, 
  number={1}, 
  journal={Math.\ Proc.\ Cambridge Philos.\ Soc.\  }, 
  author={Ben–Zvi, Michael and Kropholler, Robert and Lyman, Rylee Alanza}, 
  year={2022}, 
  pages={227--238}}

@article{benois,
        title     ="Parties rationelle du groupe libre",
	author    ="Benois, Mich\`ele",
	journal   ="C. R. Acad.\ Sci.\ Paris, S\`{e}r.\ A",
	volume    =269,
	pages     ="1188--1190",
	year      ="1969"
}

@book{bridson11,
	title={Metric Spaces of Non-Positive Curvature},
	author={Bridson, Martin R. and Haefliger, André},
	isbn={9783540643241},
	lccn={99038163},
	series={Grundlehren der mathematischen Wissenschaften},
	url={https://books.google.co.uk/books?id=3DjaqB08AwAC},
	year={2011},
	publisher={Springer Berlin, Heidelberg}
}

@ARTICLE{dl21,
	title     = "Subgroups of right-angled {C}oxeter groups via {S}tallings-like
	techniques",
	author    = "Dani, Pallavi and Levcovitz, Ivan",
	journal   = "J. Comb.\ Algebra",
	publisher = "European Mathematical Society - EMS - Publishing House GmbH",
	volume    =  5,
	number    =  3,
	pages     = "237--295",
	month     =  oct,
	year      =  2021,
}

@book{echlpt92,
	title={Word Processing in Groups},
	author={Epstein, D. B. A. AND Cannon, J. W. AND Holt, D. F. AND Levy, S. V. F. AND Paterson, M. S. AND Thurston, W. P.},
	isbn={9781040162354},
	lccn={91046119},
	url={https://books.google.es/books?id=lH2NEQAAQBAJ},
	year={1992},
	publisher={CRC Press}
}

@misc{fg25,
	title         = "Magnus submonoids and membership problems in one-relator,
	surface and hyperbolic groups",
	author        = "Foniqi, Islam and Gray, Robert D.",
	note = {preprint at https://arxiv.org/abs/2412.04932},
	month         =  sep,
	year          =  2025,
	copyright     = "http://creativecommons.org/licenses/by/4.0/",
	archivePrefix = "arXiv",
	primaryClass  = "math.GR",
	eprint        = "2509.24480"
}

@book{gh90,
	editor    = {Ghys, E. AND de la Harpe, P. },
	title     = {Hyperbolic Groups},
	series    = {Progress in Mathematics},
	volume    = {111},
	publisher = {Birkhäuser Basel},
	year      = {1990},
	doi       = {10.1007/978-3-0348-8047-8},
	url       = {https://books.google.com/books/about/Hyperbolic_groups.html?id=FIGBAAAAIAAJ},
}

@article{gn97,
	author = {Rostislav Grigorchuk and Tatiana Nagnibeda},
	title = {Complete growth functions of hyperbolic groups},
	journal = {Invent. Math.},
	volume = {130},
	year = {1997}, 
	number = {1}, 
	pages = {159--188}
}

@article{gs91,
	ISSN = {0003486X, 19398980},
	URL = {http://www.jstor.org/stable/2944334},
	author = {S. M. Gersten and H. B. Short},
	journal = {Ann.\ of Math.\ },
	number = {1},
	pages = {125--158},
	publisher = {[Annals of Mathematics, Trustees of Princeton University on Behalf of the Annals of Mathematics, Mathematics Department, Princeton University]},
	title = {Rational Subgroups of Biautomatic Groups},
	urldate = {2025-03-14},
	volume = {134},
	year = {1991}
}

@article{hermiler94,
	author = {Susan M. Hermiler},
	title = {Rewriting systems for Coxeter groups},
	journal = {J. Pure Appl. Algebra},
	volume = {92},
	number = {2},
	year = {1994}, 
	pages = {137-–148}
}

@book{hrr17,
	title={Groups, Languages and Automata},
	author={Derek F. Holt and Sarah Rees and Claas E. R\"over},
	isbn={978-1-107-15235-9},
	series={London Mathematical Society Student Texts},
	year={2017},
	publisher={Cambridge University Press, Cambridge},
}

@book{hu79,
	author    = {Hopcroft, John E. AND Ullman, Jeffrey D.},
	title     = {Introduction to Automata Theory, Languages, and Computation},
	series    = {Addison-Wesley Series in Computer Science},
	publisher = {Addison-Wesley},
	year      = {1979},
}

@article{kmw17,
	title = {Stallings graphs for quasi-convex subgroups},
	journal = {J. Algebra},
	volume = {488},
	pages = {442--483},
	year = {2017},
	issn = {0021-8693},
	doi = {https://doi.org/10.1016/j.jalgebra.2017.05.037},
	url = {https://www.sciencedirect.com/science/article/pii/S0021869317303642},
	author = {Olga Kharlampovich and Alexei Miasnikov and Pascal Weil}
}

@book{LS79,
	author    = {Lyndon, R. C. AND Schupp, P. E.},
	title     = {Combinatorial Group Theory},
	publisher = {Springer-Verlag, Berlin, Heidelberg, New York},
	year      = {1977},
}

@article{mccammondwise02,
	author = {McCammond, Jonathan P. and Wise, Daniel T.},
	title = {Fans and Ladders in Small Cancellation Theory},
	journal = {Proc.\ London Math.\ Soc.\ },
	volume = {84},
	number = {3},
	pages = {599--644},
	doi = {https://doi.org/10.1112/S0024611502013424},
	url = {https://londmathsoc.onlinelibrary.wiley.com/doi/abs/10.1112/S0024611502013424},
	eprint = {https://londmathsoc.onlinelibrary.wiley.com/doi/pdf/10.1112/S0024611502013424},
	year = {2002}
}

@article{stallings83,
	title={Topology of finite graphs},
	author={John R. Stallings},
	journal={Invent.\ Math.\ },
	year={1983},
	volume={71},
	pages={551--565},
	url={https://api.semanticscholar.org/CorpusID:16643207}
}

@ARTICLE{wise04,
	title     = "Cubulating small cancellation groups",
	author    = "Wise, Daniel T.",
	journal   = "Geom.\ Funct.\ Anal.\ ",
	publisher = "Springer Science and Business Media LLC",
	volume    =  14,
	number    =  1,
	pages     = "150--214",
	month     =  feb,
	year      =  2004
}

\end{document}